\documentclass[11pt]{article}
\usepackage[T1]{fontenc}
\usepackage[utf8]{inputenc}

\usepackage{hyperref}
\hypersetup{
  colorlinks=true,
  linkcolor=black,
  citecolor=black,
  urlcolor=blue,
  pdftitle={Radial solutions to a chemotaxis-consumption model involving prescribed signal concentrations on the boundary},
  pdfauthor={Lankeit, Winkler},
  bookmarksopen=true,
}

\usepackage{amsmath,amsthm}
\usepackage{amsfonts}
\usepackage{color}
\usepackage{amsmath,amssymb,comment}

\parindent0mm
\textwidth170mm
\textheight210mm
\oddsidemargin-5mm
\evensidemargin-5mm
\newtheorem{theo}{Theorem}[section]
\newtheorem{lem}[theo]{Lemma}
\newtheorem{cor}[theo]{Corollary}

\newtheorem{defi}[theo]{Definition}

\newcommand{\mysection}[1]{\section{#1} \setcounter{equation}{0}}

\newcommand{\be}{\begin{equation} \label}
\newcommand{\ee}{\end{equation}}
\newcommand{\bea}{\begin{eqnarray}\label}
\newcommand{\eea}{\end{eqnarray}}
\newcommand{\bas}{\begin{eqnarray*}}
\newcommand{\eas}{\end{eqnarray*}}
\newcommand{\bit}{\begin{itemize}}
\newcommand{\eit}{\end{itemize}}
\newcommand{\nn}{\nonumber}
\newcommand{\R}{\mathbb{R}}
\newcommand{\N}{\mathbb{N}}
\newcommand{\pO}{\partial\Omega}

\newcommand{\eps}{\varepsilon}

\newcommand{\supp}{{\rm supp} \, }

\newcommand{\wto}{\rightharpoonup}
\newcommand{\wsto}{\stackrel{\star}{\rightharpoonup}}
\newcommand{\hra}{\hookrightarrow}
\newcommand{\io}{\int_\Omega}
\newcommand{\na}{\nabla}
\newcommand{\Del}{\Delta}
\newcommand{\lam}{\lambda}
\newcommand{\pa}{\partial}
\newcommand{\bom}{\overline{\Omega}}
\newcommand{\Om}{\Omega}

\newcommand{\ov}{\overline}
\newcommand{\wh}{\widehat}

\newcommand{\hs}{\hspace*}
\newcommand{\cd}{\cdot}

\newcommand{\vp}{\varphi}

\newcommand{\abs}{\\[5pt]}

\newcommand{\tm}{T_{max}}
\newcommand{\tme}{T_{max,\eps}}
\newcommand{\ueps}{u_\eps}
\newcommand{\veps}{v_\eps}
\newcommand{\Feps}{F_\eps}
\newcommand{\beps}{b_\eps}
\newcommand{\heps}{h_\eps}
\newcommand{\yeps}{y_\eps}
\newcommand{\te}{\tau_\eps}
\newcommand{\vst}{v_\star}

\usepackage{newunicodechar}
\newunicodechar{ε}{\varepsilon}
\newunicodechar{α}{\alpha}
\newunicodechar{β}{\beta}
\newunicodechar{Ω}{\Omega}
\newunicodechar{Δ}{\Delta}
\newunicodechar{∇}{\nabla}
\newunicodechar{∂}{\partial}
\newunicodechar{γ}{\gamma}
\newunicodechar{Φ}{\Phi}
\newunicodechar{ℝ}{\R}
\newunicodechar{ℕ}{\mathbb{N}}
\newunicodechar{ν}{\nu}
\newunicodechar{∞}{\infty}
\newunicodechar{τ}{\tau}

\newcommand{\Ombar}{\overline{Ω}}
\newcommand{\norm}[2][]{\|#2\|_{#1}}
\newcommand{\matr}[1]{\begin{pmatrix}#1\end{pmatrix}}

\allowdisplaybreaks
\begin{document}
\enlargethispage{10mm}
\title{Radial solutions to a chemotaxis-consumption model\\ 
involving prescribed signal concentrations on the boundary}
\author{
Johannes Lankeit\footnote{lankeit@ifam.uni-hannover.de}\\
{\small Leibniz Universität Hannover, Institut für Angewandte Mathematik}\\
{\small Welfengarten~1, 30167 Hannover, Germany}
\and
Michael Winkler\footnote{michael.winkler@math.uni-paderborn.de}\\
{\small Institut f\"ur Mathematik, Universit\"at Paderborn,  }\\
{\small Warburger Str.~100, 33098 Paderborn, Germany}
}
\date{}
\maketitle
\begin{abstract}
\noindent
  The chemotaxis system 
  \begin{align*}
	u_t &= \Delta u - \nabla \cdot (u\nabla v), \\
	v_t &= \Delta v - uv,
  \end{align*}
  is considered under the boundary conditions $\frac{\partial u}{\partial\nu}- u\frac{\partial v}{\partial\nu}=0$ and $v=v_\star$ on $\partial\Omega$,
  where $\Omega\subset\mathbb{R}^n$ is a ball and $v_\star$ is a given positive constant.\abs
  In the setting of radially symmetric and suitably regular initial data, a result on global existence of bounded classical
  solutions is derived in the case $n=2$, while global weak solutions are constructed when $n\in \{3,4,5\}$.
  This is achieved by analyzing an energy-type inequality reminiscent of global structures previously observed 
  in related homogeneous Neumann problems. Ill-signed boundary integrals newly appearing therein are controlled
  by means of spatially localized smoothing arguments revealing higher order regularity features outside the spatial origin.\abs
  Additionally, unique classical solvability in the corresponding stationary problem is asserted, even in nonradial frameworks.\abs
\noindent {\bf Key words:} chemotaxis; consumption; global existence; stationary states; inhomogeneous boundary conditions\\
{\bf MSC (2020):} 35K55 (primary); 35J61, 35Q92, 92C17 (secondary) 
\end{abstract}
%
%
%
%
%
%
\newpage
\mysection{Introduction}\label{intro}
Chemotaxis systems, if posed in bounded domains, are usually studied with homogeneous Neumann boundary conditions. Especially where the chemotactic agents partially direct their motion toward higher concentrations of a signal which they consume instead of produce, however, other boundary conditions may become relevant.\abs
In this article, we consider the chemotaxis consumption system 
\be{firstsystem}
	\left\{ \begin{array}{ll}
	u_t &= \Delta u - \na \cdot (u\na v), \\
	v_t &= \Delta v - uv,
	\end{array} \right.
\ee
posing Dirichlet boundary conditions for the signal concentration $v$ and no-flux conditions for the bacterial population density $u$. \abs
Arising from a line of investigations concerned with pattern formation in colonies of {\em B. subtilis} in a fluid environment \cite{tuval}, such chemotaxis systems with signal consumption, additionally coupled to a Stokes- or Navier-Stokes fluid have been studied extensively over the last decade (for pointers to the literature see e.g. Section 4.1 of the survey \cite{BBTW} or the introduction of \cite{cao_lankeit}; for the model in the context of coral spawing, see e.g. \cite{zheng_corals}). While most works prescribed no-flux, homogeneous Neumann and homogeneous Dirichlet conditions for bacterial population density, signal concentration and fluid velocity, 
it turned out that these 
failed to adequately capture the colourful dynamics 
observed in the form of 
the patterns previously alluded to.\abs
In particular, a common result for the long-term behaviour was convergence to a constant state (i.e. a state without any patterns), as e.g. in \cite{taowin_evsmooth_stabil_consumption}, \cite{zhang_li}, \cite{win_arma}, \cite{win_TRAN}, in 
\cite{lankeit_m3as}, \cite{lankeit_wang} and \cite{win_food_supported}
in a system additionally including population growth, or also \cite{fan_jin} in a related system with nonlinear diffusion.\abs
Not least because of this, 
it has been suggested to use different, more realistic, inhomogeneous boundary conditions for the chemical signal
(see \cite{braukhoff} and \cite{braukhoff_lankeit}, but also \cite{tuval}), namely either  
Robin type boundary conditions where the rate of oxygen influx is controlled by the local oxygen concentration at the boundary or nonzero Dirichlet conditions directly prescribing the latter. 
(It has been confirmed \cite[Prop.~5.1]{braukhoff_lankeit} that the latter kind of conditions arises as a limit case of the former.)\abs
In general, altering boundary conditions can have a profound impact on the solution behaviour in chemotaxis systems, see the appearance
of a second critical mass in the Keller--Segel type system with Diricihlet conditions for $v$ studied in \cite{flw} if compared to the 
same system with Neumann conditions; in both cases homogeneous.
While -- in these particular settings related to \eqref{firstsystem} -- more realistic from a modelling perspective, the change of boundary conditions to inhomogeneous conditions brings about additional mathematical challenges. \abs
In particular, a large part of the analysis of \eqref{firstsystem} and its relatives relies on certain energy-like structures such 
as that expressed in the inequality
\begin{equation}\label{energy}
 \frac{d}{dt} \left(\io u\ln u + 2 \io |\nabla \sqrt{v}|^2 \right) + \io \frac{|\nabla u|^2}u + \io v|D^2\ln v|^2 + \frac12 \io \frac{u|\nabla v|^2}v \le 0,
\end{equation}
as documented for the Neumann problem for (\ref{firstsystem}) in \cite[(3.1)]{taowin_evsmooth_stabil_consumption}.
Replacing homogeneous Neumann by inhomogeneous Dirichlet boundary conditions for $v$, in the derivation of \eqref{energy} additional boundary integrals arise, destroying the (quasi-)Lyapunov structure of \eqref{energy} or relatives thereof on which existence results in, e.g., \cite{lankeit_m3as}, \cite{taowin_locallybounded}, \cite{win_weak_ctNS}, \cite{black_stokes_limit}, \cite{zhang_li}, \cite{taowin_evsmooth_stabil_consumption}, \cite{win_CPDE12} or also \cite{jiang_han} essentially relied.  \abs
Accordingly, only few results for chemotaxis-consumption models with boundary conditions different from homogeneous Neumann conditions
are available: Those concerning the related system with slightly different chemotaxis and energy consumption studied in \cite{knosalla_global,knosalla_nadzieja_stationary} with inhomogeneous Neumann and Dirichlet conditions are restricted to spatially one-dimensional domains.
For Robin-type conditions of the form introduced in \cite{braukhoff}, also in higher dimensions, the stationary problem of \eqref{firstsystem} has been shown to be uniquely solvable (for any prescribed total mass $\io u$ of the first component, see \cite{braukhoff_lankeit}), and (under a moderate smallness condition) features as the limit of a parabolic-elliptic simplification of \eqref{firstsystem} (cf.~\cite{FLM}). Also in fluid-coupled systems solutions have been found in the presence of logistic source terms 
(\cite{braukhoff}), or superlinear diffusion (\cite{wu_xiang,tian_xiang}), or without both (\cite{braukhoff_tang}).\abs
In the Dirichlet setting this article is concerned with 
we mention \cite{lee_wang_yang} where an asymptotic analysis of the vanishing diffusivity limit for $v$ in the stationary system seems to confirm the potential of \eqref{firstsystem} 
to capture pattern dynamics. 
In the time-dependent problem (including fluid flow), solutions in $ℝ^2\times[0,1]$ were constructed in \cite{peng_xiang} if the signal consumption was strong, at least quadratic with respect to $u$, and in $\Omega\subset ℝ^2$ in \cite{wang_win_xiang10}, in both cases under a smallness condition on the initial data.
Without smallness conditions, solutions to \eqref{firstsystem} coupled to a fluid flow governed by the Stokes equations were constructed in \cite{wang_win_xiang6} ($\Omega\subset ℝ^N$ with linear diffusion for $N=2$ and porous medium type diffusion in  higher dimensions) and in \cite{wang_win_xiang_CPDE} ($\Omega\subset ℝ^3$). Nevertheless, the solution concepts pursued in these works are rather weak 
and do not yield comparable regularity as \cite{win_CPDE12} and \cite{tao_bdoxygenconsumption} for solutions to the system with homogeneous Neumann conditions.\abs
As to the above-mentioned difficulties concerning \eqref{energy}, different strategies have been employed: 
Exploiting the Robin condition in their systems, the works \cite{braukhoff} and subsequently \cite{wu_xiang} and \cite{tian_xiang} rely on a Lions-Magenes type transformation converting $v$ to a function with homogeneus Neumann boundary conditions. 
The energy functional is enhanced by additional ``boundary energy'' terms in \cite{braukhoff_tang}.
In \cite{peng_xiang} and \cite{FLM}, a trace theorem is used to control the boundary integrals by integrals over the domain involving higher derivatives, which are available either due to the simpler elliptic form of the second equation (in \cite{FLM}) or due to a smallness condition (\cite{peng_xiang}, also in the result on long-term limit in \cite{FLM}).
In \cite{wang_win_xiang_CPDE}, a localized modification of the energy functional was investigated, the localization being detrimental to the regularity information near the boundary. Leaving \eqref{energy} behind, the approaches in
\cite{wang_win_xiang6} and \cite{wang_win_xiang10} used  different energy functionals (or small-data energy functionals), giving rise to less potent a priori estimates, as reflected in the generalized sense of solvability obtained.\abs
{\bf Regularity control on the boundary for radial solutions. Main results.} \quad
In this article, we plan to use radial symmetry as a mean to unravel difficulties related to possible effects
that the change from Neumann to Dirichlet boundary conditions for the signal may have on boundary regularity of solutions.
Specifically, in a ball $\Omega=B_R(0)\subset\R^n$ with $R>0$ and $n\ge 2$, and with a given positive constant $\vst$,
we shall consider the initial-boundary value problem
\be{0}
	\left\{ \begin{array}{ll}
	u_t = \Delta u - \na \cdot (u\na v), 
	\qquad & x\in \Omega, \ t>0, \\[1mm]
	v_t = \Delta v - uv, 
	\qquad & x\in \Omega, \ t>0, \\[1mm]
	\frac{\partial u}{\partial\nu}-u \frac{\partial v}{\partial\nu}=0, \quad v=\vst,
	\qquad & x\in \pO, \ t>0, \\[1mm]
	u(x,0)=u^{(0)}(x), \quad v(x,0)=v^{(0)}(x), 
	\qquad & x\in\Omega,
	\end{array} \right.
\ee
assuming that
\be{init}
	\left\{ \begin{array}{l}
	u^{(0)}\in W^{1,\infty}(\Om) \mbox{ is nonnegative in $\Om$ and radially symmetric with $u^{(0)}\not\equiv 0$,\quad  and} \\[1mm]
	v^{(0)}\in W^{1,\infty}(\Om) \mbox{ is positive in $\bom$ and radially symmetric with $v^{(0)}=\vst$ on $\pO$,}
	\end{array} \right.
\ee
where, as throughout the sequel, radial symmetry of a function $\varphi$ on $\Om$ is to be understood as referring to the 
spatial origin.\abs
To make appropriate use of these symmetry assumptions,
at a first stage of our analysis 
we shall rely on the essentially one-dimensional framework thereby generated in order to step by step turn the
basic properties of mass conservation in the first component, and uniform $L^\infty$ boundedness in the second,
into knowledge on higher order regularity features locally outside the spatial origin (see Section \ref{sect3} and especially
Corollary \ref{cor10}).
This will particularly enable us to appropriately control boundary integrals which due to the presence of possibly nonzero
normal derivatives arise in a spatially global energy analysis related to that in (\ref{energy}) (Section \ref{sect4}).\abs
In the spatially planar case, this will be found to entail a priori bounds actually in $L^\infty \times W^{1,\infty}$,
and to thus imply the following statement on global classical solvability and boundedness in (\ref{0}):
\begin{theo}\label{theo16}
  Let $R>0$ and $\Omega=B_R(0)\subset\R^2$, and suppose that $\vst\ge 0$, and that $u^{(0)}$ and $v^{(0)}$ satisfy (\ref{init}).
  Then there exist unique functions  
  \bas
	\left\{ \begin{array}{l}
	u\in C^0(\bom\times [0,\infty)) \cap C^{2,1}(\bom\times (0,\infty)) \qquad \mbox{and} \\[1mm]
	v\in \bigcap_{q>2} C^0([0,\infty);W^{1,q}(\Om)) \cap C^{2,1}(\bom\times (0,\infty))
	\end{array} \right.
  \eas
  which are such that $u>0$ and $v>0$ in $\bom\times [0,\infty)$, that $(u(\cdot,t),v(\cdot,t))$ is radially symmetric for all $t>0$,
  and that $(u,v)$ solves (\ref{0}) in the classical sense in $\Om\times (0,\infty)$.
  Moreover, there exists $C>0$ such that
  \be{16.1}
	\|u(\cdot,t)\|_{L^\infty(\Om)} + \|v(\cdot,t)\|_{W^{1,\infty}(\Om)} \le C
	\qquad \mbox{for all } t>0.
  \ee
\end{theo}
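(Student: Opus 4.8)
The plan is to construct the solution via an approximation scheme and then to pass to the limit using a priori estimates which, thanks to the radial setting, can be pushed up to $L^\infty\times W^{1,\infty}$ in the two-dimensional case. First I would set up a suitable regularized family of problems — for instance, replacing the signal consumption term $uv$ by $\frac{uv}{1+\eps u}$ (or a similar truncation) and smoothing the initial data — so that standard parabolic theory yields unique global radial classical solutions $(\ueps,\veps)$, while the structural features of \eqref{0} survive in the limit $\eps\searrow 0$. On this family, the two immediately available pieces of information are mass conservation $\io\ueps(\cdot,t)=\io u^{(0)}$ (from the no-flux condition in the first equation) and the uniform bound $0\le\veps\le\max\{\vst,\|v^{(0)}\|_{L^\infty(\Om)}\}$ (from the maximum principle applied to the second equation, using $v=\vst$ on $\pO$ and consumption being sink-like). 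These feed into the one-dimensional regularity machinery developed in Section \ref{sect3}: the radial reduction turns \eqref{0} into a system in one spatial variable, and by the localized smoothing arguments alluded to there, one obtains — on sets $\{r\ge\delta\}$ bounded away from the origin — higher-order bounds on $\veps$ and its derivatives, culminating in the estimates collected in Corollary \ref{cor10}.

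The core of the argument is then the energy analysis of Section \ref{sect4}, which is the analogue of \eqref{energy} for the present inhomogeneous problem. Testing the first equation by $\ln\ueps$ and the second by an appropriate expression (of the form $-\frac{\Del\veps}{\veps}+\frac{|\na\veps|^2}{2\veps^2}$, as in \cite{taowin_evsmooth_stabil_consumption}) and adding, one arrives at a differential inequality for $\io\ueps\ln\ueps+2\io|\na\sqrt{\veps}|^2$ whose dissipation terms control $\io\frac{|\na\ueps|^2}{\ueps}$, $\io\veps|D^2\ln\veps|^2$ and $\io\frac{\ueps|\na\veps|^2}{\veps}$, \emph{but} which now carries additional boundary contributions over $\pO$ involving $\dN\veps$ and higher normal derivatives of $\veps$. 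Since $v=\vst$ is constant on $\pO$, tangential derivatives vanish there, so these boundary integrals are in fact one-dimensional quantities in the radial variable evaluated at $r=R$; by Corollary \ref{cor10} — which provides exactly the regularity of $\veps$ near $\pO$ (the region $r\ge R/2$ is bounded away from the origin) — these terms are bounded. Absorbing them (partly into the dissipation terms, partly via Gronwall) yields a uniform-in-$t$ bound on $\io\ueps\ln\ueps$ together with a space-time bound on $\io\frac{|\na\ueps|^2}{\ueps}=4\io|\na\sqrt{\ueps}|^2$.

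From there the bootstrap is standard in the two-dimensional case: the bound on $\io\ueps\ln\ueps$ combined with the space-time $L^2$ control of $\na\sqrt{\ueps}$ and a Gagliardo–Nirenberg inequality in dimension $n=2$ gives a space-time bound on $\io\ueps^2$, which upon testing the second equation yields better bounds on $\na\veps$; a Moser-type iteration (or semigroup smoothing applied to $\ueps_t=\Del\ueps-\na\cdot(\ueps\na\veps)$ with the no-flux boundary condition) then promotes this to the uniform bound $\|\ueps(\cdot,t)\|_{L^\infty(\Om)}\le C$. Feeding $\ueps\in L^\infty$ back into the second equation and using parabolic Schauder theory up to the boundary (the Dirichlet datum $\vst$ being constant and hence smooth) produces $\|\veps(\cdot,t)\|_{W^{1,\infty}(\Om)}\le C$, i.e.\ \eqref{16.1}. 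Finally, with these estimates in hand the limit $\eps\searrow 0$ is performed along a subsequence using Arzel\`a–Ascoli together with interior parabolic estimates, giving a classical solution $(u,v)$ with the claimed regularity and positivity (positivity of $u$ from the strong maximum principle, positivity of $v$ likewise since $v=\vst>0$ — or $v\ge 0$ — on $\pO$ and the consumption term is linear in $v$); uniqueness follows by a routine energy comparison of two solutions. I expect the main obstacle to be the control of the ill-signed boundary integrals in the energy inequality: everything hinges on having enough regularity of $v$ at $r=R$, so the delicate point is that the localized smoothing estimates of Section \ref{sect3}, which \emph{a priori} degenerate at the origin, do remain valid and uniform up to the boundary $\pO$, and that the resulting bounds are strong enough to close the energy estimate rather than merely postpone the blow-up.
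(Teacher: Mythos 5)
Your overall architecture is the paper's: radial reduction and localized smoothing away from the origin (Section \ref{sect3}, Corollary \ref{cor10}) to tame the boundary integrals, then the global energy inequality of Section \ref{sect4} giving uniform bounds on $\io u\ln u$ and $\io|\na v|^2$ plus time-averaged bounds on $\io\frac{|\na u|^2}{u}$ and $\io|\na v|^4$, then a 2D bootstrap to $L^\infty\times W^{1,\infty}$. One structural deviation: for $n=2$ the paper does not approximate at all; it works directly with the local-in-time classical solution of (\ref{0}) provided by Lemma \ref{lem1} (Amann's theory, which also delivers uniqueness and the stated regularity class) and shows that the a priori bounds exclude the extensibility criterion (\ref{ext}). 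In your scheme you would additionally have to prove that the $\eps$-limit is classical up to $\pO$ and up to $t=0$, and uniqueness for the coupled boundary condition $\frac{\partial u}{\partial\nu}=u\frac{\partial v}{\partial\nu}$ is not quite the "routine energy comparison" you assert; none of this is needed on the paper's route.

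The genuine gap is in your bootstrap step ``space-time bound on $\io u^2$ $\Rightarrow$ better bounds on $\na v$ $\Rightarrow$ Moser iteration.'' The Moser argument (paper's Lemma \ref{lem15}, citing \cite{ding_win}) needs the drift $b=-\na v$ bounded in $L^\infty((0,\tm);L^q(\Om))$ for some $q>n=2$, i.e.\ a \emph{uniform-in-time} bound, and a merely time-averaged $L^2$ bound on $u$ cannot produce this: feeding $uv\in L^2_{loc}$ in time with values in $L^2(\Om)$ into the Dirichlet heat semigroup gives the singularity $(t-s)^{-\frac12-(\frac12-\frac1q)}$, which fails to be square-integrable near $s=t$ for every $q>2$, and maximal regularity only yields $\na v$ in time-averaged spaces. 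The missing idea is exactly the paper's Lemma \ref{lem14}: a coupled differential inequality for $y(t)=\io u^2+a\io|\na v|^4$, in which $\io u^2|\na v|^2$ and $\io|\na u|\,|\na v|^3$ are absorbed via the Biler--Hebisch--Nadzieja inequality $\io u^3\le\eta\io|\na u|^2+c$ (available precisely because $\io u\ln u$ is uniformly bounded) and a Gagliardo--Nirenberg bound of $\io|\na v|^6$ in terms of $\|\na|\na v|^2\|_{L^2}^2$ and the uniform bound on $\io|\na v|^2$, while the ill-signed boundary term $\int_{\pO}|\na v|^2\frac{\partial|\na v|^2}{\partial\nu}$ reappears at this level and is controlled, in time-averaged form, again through Corollary \ref{cor10} and Lemma \ref{lem77}. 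Only this yields $\sup_{t<\tm}\io|\na v(\cdot,t)|^4\le C$, which launches the Moser iteration; the subsequent semigroup plus Lieberman gradient-H\"older step then gives (\ref{16.1}) and globality, as in your final paragraph.
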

But also in some higher-dimensional situations, the regularity information gained from our energy analysis can be used to
establish a result on global solvability, albeit in a slightly weaker framework:
\begin{theo}\label{theo25}
  Let $n\in \{3,4,5\}$, $R>0$ and $\Omega=B_R(0)\subset\R^n$, let $\vst\ge 0$, and assume (\ref{init}).
  Then one can find nonnegative functions
  \be{25.01}
	\left\{ \begin{array}{l}
	u\in L^\infty((0,\infty);L^1(\Om)) \cap L^\frac{n+2}{n}_{loc}(\bom\times [0,\infty)) 
		\cap L^\frac{n+2}{n+1}_{loc}([0,\infty);W^{1,\frac{n+2}{n+1}}(\Om)) \qquad \mbox{and} \\[1mm]
	v\in L^\infty(\Om\times (0,\infty)) \quad \mbox{with} \quad
		v-\vst \in L^\infty((0,\infty);W_0^{1,2}(\Om)) \cap L^4_{loc}([0,\infty);W^{1,4}(\Om))
	\end{array} \right.
  \ee
  such that $u(\cdot,t)$ and $v(\cdot,t)$ are radially symmetric for a.e.~$t>0$, and that $(u,v)$ forms a global weak solution of
  (\ref{0}) in the sense of Definition \ref{dw} below. 
  Furthermore, there exists $C>0$ such that
  \be{25.1}
	\io u(\cdot,t)\ln u(\cdot,t) \le C
	\qquad \mbox{for almost all } t>0
  \ee
  and
  \be{25.2}
	\io |\na v(\cdot,t)|^2 \le C
	\qquad \mbox{for almost all } t>0
  \ee
  as well as
  \be{25.3}
	\int_t^{t+1} \io \Big\{ u^\frac{n+2}{n} + |\na u|^\frac{n+2}{n+1} + |\na v|^4 \Big\} \le C
	\qquad \mbox{for all } t>0.
  \ee
\end{theo}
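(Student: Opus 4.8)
The plan is to derive Theorem \ref{theo25} as the higher-dimensional counterpart of Theorem \ref{theo16}, working with the same regularized problems used earlier in the paper and passing to the limit. I would first introduce a family of approximate problems — replacing $uv$ by $\frac{uv}{1+\eps u}$ (or similar) and smoothing the chemotactic flux — for which global classical radial solutions $(\ueps,\veps)$ are known to exist, with $\veps=\vst$ on $\pO$ and $\frac{\pa\ueps}{\pa\nu}-\ueps\frac{\pa\veps}{\pa\nu}=0$. Mass conservation $\io\ueps(\cdot,t)=\io u^{(0)}$ and the maximum principle bound $0<\veps\le\max\{\vst,\|v^{(0)}\|_{L^\infty}\}=:M$ are immediate. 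The core of the argument is then to reproduce, at the $\eps$-level, the energy inequality related to (\ref{energy}): testing the first equation by $\ln\ueps$ and combining with the second, one obtains a differential inequality for $\io\ueps\ln\ueps+2\io|\na\sqrt{\veps}|^2$ whose production terms are the ill-signed boundary integrals over $\pO$ coming from the nonvanishing normal derivative $\pa_\nu\veps$. Here is precisely where Corollary \ref{cor10} and the localized higher-order regularity estimates of Section \ref{sect3} enter: on a neighbourhood of $\pO$ (away from the origin), the radial structure yields $\eps$-uniform bounds on enough derivatives of $\veps$ to control these boundary integrals — using a trace inequality to absorb them into the good dissipation terms $\io\frac{|\na\ueps|^2}{\ueps}+\io\veps|D^2\ln\veps|^2+\frac12\io\frac{\ueps|\na\veps|^2}{\veps}$ plus lower-order terms handled by mass conservation and the $L^\infty$ bound on $\veps$.

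Once this $\eps$-uniform energy inequality is in hand, integrating in time gives (\ref{25.1})--(\ref{25.2}) and the space-time bounds $\int_t^{t+1}\io\frac{|\na\ueps|^2}{\ueps}\le C$ and $\int_t^{t+1}\io\frac{\ueps|\na\veps|^2}{\veps}\le C$. From $\int\io\frac{|\na\ueps|^2}{\ueps}$ together with $\|\ueps(\cdot,t)\|_{L^1}\le C$, a Gagliardo--Nirenberg argument applied to $\sqrt{\ueps}$ produces the bound on $\int_t^{t+1}\io\ueps^{\frac{n+2}{n}}$ exactly for $n\in\{3,4,5\}$ (this is the dimension restriction: one needs $\frac{n+2}{n}\le\frac{n+2}{n-2}$, i.e. $n\le 4$ from GN, and the full range up to $5$ comes from additionally using $\int\io\frac{\ueps|\na\veps|^2}{\veps}$ to trade regularity — I would need to check the precise interpolation, but the exponents in (\ref{25.3}) are chosen to be exactly what the energy dissipation delivers). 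The gradient bound $\int_t^{t+1}\io|\na\ueps|^{\frac{n+2}{n+1}}\le C$ then follows by writing $|\na\ueps|^{\frac{n+2}{n+1}}=\big(\frac{|\na\ueps|^2}{\ueps}\big)^{\frac{n+2}{2(n+1)}}\ueps^{\frac{n+2}{2(n+1)}}$ and applying Hölder with the $L^{\frac{n+2}{n}}$ bound; similarly $\int_t^{t+1}\io|\na\veps|^4\le C$ comes from combining $\int\io\frac{\ueps|\na\veps|^2}{\veps}$, the pointwise bound on $\veps$, and the $D^2\ln\veps$ dissipation via another Gagliardo--Nirenberg interpolation for $\na\veps$ (using $\na\veps$ controlled in $L^2$ uniformly and $v|D^2\ln v|^2$ controlling a second-order quantity).

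With these $\eps$-uniform bounds, compactness is routine: using the equations to bound $\pa_t\ueps$ and $\pa_t\veps$ in suitable negative-order spaces, Aubin--Lions yields strong convergence of $\ueps\to u$ in $L^p_{loc}$ for some $p>1$ and of $\na\veps\to\na v$ in $L^q_{loc}$, after passing to a subsequence; the nonlinearities $\ueps\na\veps$ and $\ueps\veps$ converge in $L^1_{loc}$ by Vitali's theorem, since the above estimates furnish the requisite equi-integrability. Weak lower semicontinuity transports (\ref{25.1})--(\ref{25.3}) to the limit, the regularity claims in (\ref{25.01}) are read off from the uniform bounds, radial symmetry is preserved under the limit, and the boundary condition $v-\vst\in W_0^{1,2}(\Om)$ survives because $\veps-\vst\in W_0^{1,2}(\Om)$ for every $\eps$ and this is a closed condition. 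Verifying that the limit satisfies the weak formulation of Definition \ref{dw} — in particular correctly interpreting the no-flux boundary condition for $u$ in the weak sense — is then a matter of testing the approximate equations and passing to the limit termwise.

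The main obstacle I anticipate is the $\eps$-uniform control of the boundary integrals in the energy inequality: one must show that the localized Section \ref{sect3} estimates are strong enough — and uniform in $\eps$ — to dominate every ill-signed term produced by $\pa_\nu\veps\ne 0$ on $\pO$, via a trace inequality whose constant does not degenerate, while keeping the absorbed portion strictly below the available dissipation. A secondary delicate point is pinning down the exact exponents in (\ref{25.3}) and the precise reason $n=5$ is still admissible but $n=6$ is not; this hinges on the sharp form of the Gagliardo--Nirenberg interpolation balancing $\io\frac{|\na\ueps|^2}{\ueps}$ against $\|\ueps\|_{L^1}$ and on how much additional integrability the term $\io\frac{\ueps|\na\veps|^2}{\veps}$ can buy.
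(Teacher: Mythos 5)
Your overall strategy is the paper's: regularize via $F_\eps$, run the energy analysis of Section \ref{sect4} with the boundary integrals controlled through the radial local estimates of Section \ref{sect3} (Corollary \ref{cor10}), convert the dissipation $\int\int\frac{|\na\ueps|^2}{\ueps}$ into the space-time bounds of (\ref{25.3}) by Gagliardo--Nirenberg, and pass to the limit by Aubin--Lions and Vitali. Two points in your reasoning are off, one of them substantively.

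First, your explanation of the restriction $n\in\{3,4,5\}$ is wrong. The Gagliardo--Nirenberg step giving $\int_t^{t+1}\io\ueps^{\frac{n+2}{n}}\le C$ from $\int\int\frac{|\na\ueps|^2}{\ueps}$ and mass conservation works in \emph{every} dimension (it is applied to $\sqrt{\ueps}$ with exponent $\frac{2(n+2)}{n}\le\frac{2n}{n-2}$, which always holds), and the paper never uses the term $\io\frac{\Feps(\ueps)}{\veps}|\na\veps|^2$ to ``trade regularity'' -- in Lemma \ref{lem11} that favorably signed term is simply dropped. The dimension restriction enters exactly where you declare compactness ``routine'': in the time-derivative estimates of Lemma \ref{lem23}, whose dual exponent $1+\frac{5n+2}{6-n}$ requires $n<6$, and in the limit passage for the flux term, where $\ueps\na\veps$ is equi-integrable because $\ueps$ is bounded in $L^{\frac{n+2}{n}}$ and $\na\veps$ in $L^4$ locally in space-time, and H\"older closes only when $\frac{n}{n+2}+\frac14<1$, i.e.\ $n<6$. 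If you do not identify this, the Vitali/weak-convergence step for $\ueps\Feps'(\ueps)\na\veps\to u\na v$ is unjustified, since its equi-integrability is precisely the issue. Second, the boundary integrals are not absorbed into the dissipation by a trace inequality; that is the route the paper deliberately avoids. Instead, Lemma \ref{lem9} gives a pointwise-in-space, uniform-in-time bound on $v_{\eps r}$ near $\pO$ and Corollary \ref{cor7} a time-averaged bound on $\ueps(R,\cdot)$, so that the boundary terms in Lemma \ref{lem3} are estimated \emph{directly} in a time-averaged sense (Corollary \ref{cor10}), and the energy inequality is closed by the ODE comparison device of Lemma \ref{lem77} together with the functional inequality of Lemma \ref{lem21} (which also produces the $\int\int|\na\veps|^4$ bound via $\int\int\frac{|\na\veps|^4}{\veps^3}$ and (\ref{vinfty})). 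Since you cite Corollary \ref{cor10} anyway this second point is a mis-description rather than a gap, but as stated your absorption argument is not the mechanism that makes the proof work.
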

We have to leave open here the question how far information on the large time behaviour of the above solutions
that goes beyond the boundedness features in (\ref{16.1}) and in (\ref{25.1})-(\ref{25.3}) can be derived,
especially in the presence of large initial data.
After all, a steady state analysis guided by the approach developed in \cite{braukhoff_lankeit} provides the following
result which may be viewed as an indication for nontrivial dynamics involving structured states in (\ref{0}):
\begin{theo}\label{th:stationary}
  Let $Ω\subset ℝ^n$ be a bounded domain with smooth boundary, and suppose that 
  $\vst \in \bigcup_{\beta\in (0,1)} C^{2+\beta}(\bom)$ is nonnegative.
  Then for every $m\ge 0$, the stationary problem \eqref{stationary} has a unique solution $(u,v)\in (C^2(\Ombar))^2$ 
  satisfying $\io u=m$.
  If $Ω=B_R(0)$ and $\vst$ is constant, then this solution is radially symmetric, and both $u$ and $v$ are convex.
\end{theo}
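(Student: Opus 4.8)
The plan is to decouple the stationary system into a scalar nonlocal problem for $v$, in the spirit of \cite{braukhoff_lankeit}. Since $\na u-u\na v=e^{v}\na(e^{-v}u)$, the first of the two stationary equations can be rewritten as $\na\cd(e^{v}\na(e^{-v}u))=0$ in $\Om$, while the no-flux condition turns into $e^{v}\na(e^{-v}u)\cd\nu=0$ on $\pO$. Testing the former against $e^{-v}u$ would then give $\io e^{v}|\na(e^{-v}u)|^{2}=0$ and hence $u=\kappa e^{v}$ in $\bom$ with some constant $\kappa\in\R$; the requirement $\io u=m$ forces $\kappa=m/\io e^{v}\ge 0$, so that in particular nonnegativity of $u$ need not be assumed. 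Accordingly, the second equation reduces to
\[
  \Del v=\kappa\,ve^{v}\quad\text{in }\Om,\qquad v=\vst\quad\text{on }\pO,\qquad\text{with}\quad\kappa=\frac{m}{\io e^{v}},
\]
so that the task becomes to find $\kappa\ge 0$ together with a corresponding solution $v=v_{\kappa}$ of $\Del v=\kappa ve^{v}$, $v|_{\pO}=\vst$, satisfying $\kappa\io e^{v_{\kappa}}=m$.

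For each fixed $\kappa\ge 0$, I would first establish unique solvability of this semilinear Dirichlet problem: the constants $0$ and $\max_{\bom}\vst$ form an ordered pair of sub- and supersolutions (here $\vst\ge 0$ enters), so a solution exists with $0\le v_{\kappa}\le\max_{\bom}\vst$; uniqueness follows from monotonicity of $s\mapsto se^{s}$ on $[0,\infty)$ --- equivalently, $v_{\kappa}$ is the unique minimizer of the strictly convex functional $v\mapsto\tfrac12\io|\na v|^{2}+\kappa\io(v-1)e^{v}$ over $\{v=\vst\text{ on }\pO\}$ --- and $v_{\kappa}\in C^{2+\beta}(\bom)$ by standard elliptic $L^{p}$- and Schauder estimates together with $\vst\in C^{2+\beta}(\bom)$. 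Since the linearization $w\mapsto\Del w-\kappa(1+v_{\kappa})e^{v_{\kappa}}w$ with homogeneous Dirichlet data has nonpositive zeroth-order coefficient and is therefore invertible, the implicit function theorem shows that $\kappa\mapsto v_{\kappa}$ is continuously differentiable.

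The core of the argument is to prove that the mass map $\Phi(\kappa):=\kappa\io e^{v_{\kappa}}$ is a bijection of $[0,\infty)$ onto $[0,\infty)$, which will yield existence and uniqueness of the admissible $\kappa$ and hence of $(u,v)=(\kappa e^{v_{\kappa}},v_{\kappa})$. Continuity of $\Phi$ and $\Phi(0)=0$ are immediate, and $\Phi(\kappa)\ge\kappa|\Om|\to\infty$ as $\kappa\to\infty$. For the decisive strict monotonicity on $(0,\infty)$, differentiating the equation shows that $w_{\kappa}:=\partial_{\kappa}v_{\kappa}$ solves $\Del w_{\kappa}-\kappa(1+v_{\kappa})e^{v_{\kappa}}w_{\kappa}=v_{\kappa}e^{v_{\kappa}}\ge 0$ with $w_{\kappa}=0$ on $\pO$, whence $w_{\kappa}\le 0$; comparing $-w_{\kappa}$ with the constant supersolution $\tfrac1\kappa\cd\tfrac{\|v_{\kappa}\|_{L^{\infty}(\Om)}}{1+\|v_{\kappa}\|_{L^{\infty}(\Om)}}$ of the same operator would give the pointwise bound $\kappa|w_{\kappa}|\le\tfrac{\|v_{\kappa}\|_{L^{\infty}(\Om)}}{1+\|v_{\kappa}\|_{L^{\infty}(\Om)}}<1$, so that $\Phi'(\kappa)=\io e^{v_{\kappa}}-\kappa\io e^{v_{\kappa}}|w_{\kappa}|>0$. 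I expect this monotonicity to be the main obstacle; the constant-supersolution estimate for $\partial_{\kappa}v_{\kappa}$ is the key point and has to be arranged in tandem with the $C^{1}$-dependence on $\kappa$.

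Finally, for $\Om=B_{R}(0)$ with constant $\vst$ I would argue as follows. Any rotation of a solution is again a solution with the same mass, so uniqueness forces the solution to be radially symmetric. Writing $v=v(r)$, one has $v\in C^{2}([0,R])$ with $v'(0)=0$ and $v\ge 0$, and $(r^{n-1}v')'=\kappa r^{n-1}ve^{v}\ge 0$ together with $r^{n-1}v'|_{r=0}=0$ yields $v'\ge 0$ on $[0,R]$; consequently $s\mapsto v(s)e^{v(s)}$ is nondecreasing, so $r^{n-1}v'=\int_{0}^{r}\kappa s^{n-1}v(s)e^{v(s)}\,ds\le\tfrac{r^{n}}{n}\,\kappa v(r)e^{v(r)}$, and inserting this into the identity $v''(r)=r^{-n}\big(r\,(r^{n-1}v')'-(n-1)\,r^{n-1}v'\big)$ gives $v''\ge 0$ as well. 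Since the Hessian of a radial function has eigenvalues $v''(r)$ and $v'(r)/r$, it follows that $v$ is convex, and then $D^{2}u=\kappa e^{v}\big(D^{2}v+\na v\otimes\na v\big)$ is positive semidefinite, so that $u$ is convex, too.
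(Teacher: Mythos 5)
Your proposal is correct and follows essentially the same route as the paper: eliminate $u$ via $u=\kappa e^{v}$, reduce to the scalar Dirichlet problem $\Delta v=\kappa ve^{v}$, prove that the mass map $\kappa\mapsto\kappa\io e^{v_\kappa}$ is a bijection of $[0,\infty)$ by establishing the pointwise bound $\kappa|\partial_\kappa v_\kappa|<1$, and obtain convexity in the radial case from the ODE form of the equation. The only differences are implementational (sub/supersolutions and the implicit function theorem where the paper uses a Schauder fixed point and difference quotients; a constant-supersolution comparison for $\partial_\kappa v_\kappa$ where the paper evaluates at an interior minimum), and these are sound.
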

\mysection{Local solvability, approximation and basic properties}
In order to simultaneously address, throughout large parts of our analysis, 
the case $n=2$ in which classical solvability is strived for, and the case $n\in \{3,4,5\}$
in which we intend to construct a solution via approximation, for $\eps\in [0,1)$ let us consider the variants of (\ref{0}) 
given by
\be{0eps}
	\left\{ \begin{array}{ll}
	u_{\eps t} = \Delta\ueps - \na \cdot \big( \ueps\Feps'(\ueps)\na\veps\big),
	\qquad & x\in\Om, \ t>0, \\[1mm]
	v_{\eps t} = \Delta \veps - \Feps(\ueps)\veps,
	\qquad & x\in\Om, \ t>0, \\[1mm]
	\frac{\partial\ueps}{\partial\nu} - \ueps\Feps'(\ueps)\frac{\partial\veps}{\partial\nu}=0, \quad \veps=\vst,
	\qquad & x\in\pO, \ t>0, \\[1mm]
	\ueps(x,0)=u^{(0)}(x), \quad \veps(x,0)=v^{(0)}(x),
	\qquad & x\in\Om,
	\end{array} \right.
\ee
where
\be{F0}
	\Feps(\xi):=\frac{\xi}{1+\eps\xi},
	\qquad \xi\ge 0, \ \eps\in [0,1),
\ee
satisfies 
\be{F}
	0 \le \Feps(\xi)\le \xi
	\quad \mbox{and} \quad
	0 \le \Feps'(\xi)=\frac{1}{(1+\eps\xi)^2} \le 1
	\qquad \mbox{for all $\xi\ge 0$ and } \eps\in [0,1);
\ee
indeed, these choices ensure that (\ref{0eps}) coincides with (\ref{0}) when $\eps=0$.\abs
Local solvability and a handy extensibility criterion can be obtained by resorting to standard literature:
\begin{lem}\label{lem1}
  Let $\eps\in [0,1)$. Then there exist $\tme\in (0,\infty]$ and uniquely determined functions
  \bas
	\left\{ \begin{array}{l}
	\ueps\in C^0(\bom\times [0,\tme)) \cap C^{2,1}(\bom\times (0,\tme)) \qquad \mbox{and} \\[1mm]
	\veps\in \bigcap_{q>n} C^0([0,\tme);W^{1,q}(\Om)) \cap C^{2,1}(\bom\times (0,\tme))
	\end{array} \right.
  \eas
  such that $\ueps>0$ and $\veps>0$ in $\bom\times [0,\tme)$, that $\ueps(\cdot,t)$ and $\veps(\cdot,t)$ are radially symmetric 
  for all $t\in (0,\tme)$,
  that $(\ueps,\veps)$ solves (\ref{0eps}) classically in $\Om\times (0,\tme)$, and that
  \be{ext}
	\mbox{either $\tme=\infty$, \quad or \quad}
	\limsup_{t\nearrow\tme} \Big\{ \|\ueps(\cdot,t)\|_{W^{1,q}(\Om)} + \|\veps(\cdot,t)\|_{W^{1,q}(\Om)} \Big\}
	= \infty
	\quad
	\mbox{for all $q>n$}.
  \ee
\end{lem}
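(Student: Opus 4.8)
The plan is to treat (\ref{0eps}) as a quasilinear parabolic system in the unknowns $(\ueps,\veps)$ and invoke a standard fixed-point-plus-bootstrap scheme, paying attention only to the few features specific to the present setting: the inhomogeneous Dirichlet condition $\veps=\vst$ on $\pO$, the nonlinear-flux condition on $\ueps$, and the requirement of radial symmetry. First I would substitute $w_\eps:=\veps-\vst$ so that $w_\eps$ satisfies a homogeneous Dirichlet condition and an inhomogeneous equation $w_{\eps t}=\Del w_\eps-\Feps(\ueps)(w_\eps+\vst)$; since $\vst$ is a constant this costs nothing. For fixed $\eps$, $\Feps$ and $\Feps'$ are smooth and bounded with bounded derivatives on $[0,\infty)$ by (\ref{F0})--(\ref{F}), so the chemotactic sensitivity $\xi\mapsto\xi\Feps'(\xi)$ is globally Lipschitz; this is the (minor) reason for introducing the regularization and it keeps the nonlinearities under control for $\eps\in(0,1)$. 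For $\eps=0$ the sensitivity is merely locally Lipschitz, but one still obtains local existence by the usual argument, using that solutions stay bounded on short time intervals.

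Next I would set up the contraction argument: on a small time interval $[0,T]$, in the Banach space $X:=C^0(\bom\times[0,T])\times\big(C^0([0,T];W^{1,q}(\Om))\big)$ for some fixed $q>n$, I define a map $\Phi$ by letting $\Phi(\hat u,\hat v)=(\ueps,\veps)$, where $\veps$ solves the linear problem $v_{\eps t}=\Del\veps-\Feps(\hat u)\veps$, $\veps=\vst$ on $\pO$, $\veps(\cdot,0)=v^{(0)}$, and then $\ueps$ solves the linear-drift problem $u_{\eps t}=\Del\ueps-\na\cdot(\ueps\Feps'(\hat u)\na\veps)$ with the no-flux condition $\dN\ueps=\ueps\Feps'(\hat u)\dN\veps$ on $\pO$ and $\ueps(\cdot,0)=u^{(0)}$. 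Parabolic $L^p$- and Schauder-type estimates (e.g.\ via analytic semigroups, $L^q$ maximal regularity for the Dirichlet Laplacian for $\veps$, and for the conormal-type problem for $\ueps$) give that $\Phi$ maps a suitable closed ball of $X$ into itself and is a contraction for $T$ small; the needed smoothing $e^{t\Del}:L^q\to W^{1,q}$ and the embedding $W^{1,q}\hra L^\infty$ for $q>n$ make the quadratic-type terms tractable. This yields a unique local mild solution, which by parabolic regularity theory (bootstrapping: $\veps\in W^{1,q}$ feeds Hölder regularity into the $\ueps$-equation, hence $\ueps\in C^{2,1}$ in the interior, hence the coefficients in the $\veps$-equation are Hölder, hence $\veps\in C^{2,1}$, and so on) is in fact a classical solution on $\Om\times(0,\tme)$ with the claimed regularity up to $t=0$ dictated by (\ref{init}). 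Radial symmetry is inherited because the data are radial and the whole construction is equivariant under rotations: by uniqueness the solution must be invariant under every rotation, hence radial. Positivity of $\ueps$ and $\veps$ follows from the maximum principle / strong maximum principle applied to the two scalar equations, using $u^{(0)}\not\equiv0$, $u^{(0)}\ge0$ and $v^{(0)}>0$, $\vst\ge0$ (and, where $\vst=0$, still $v^{(0)}>0$ in $\Om$ keeps $\veps>0$ inside, with the behaviour on $\pO$ read off from $\veps=\vst$).

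Finally I would prove the extensibility criterion (\ref{ext}) in the usual contrapositive form: if $\tme<\infty$ but $\limsup_{t\nearrow\tme}\big(\|\ueps(\cdot,t)\|_{W^{1,q}}+\|\veps(\cdot,t)\|_{W^{1,q}}\big)<\infty$ for some $q>n$, then the local existence time afforded by the fixed-point argument depends only on bounds for these norms (together with the fixed data $u^{(0)},v^{(0)},\vst$), so the solution could be continued past $\tme$, contradicting maximality. The main technical obstacle is the combination of the nonstandard boundary conditions: the nonlinear conormal condition $\dN\ueps-\ueps\Feps'(\ueps)\dN\veps=0$ is not of pure Neumann type, so one has to invoke (or cite) linear parabolic theory for oblique/conormal problems with coefficients of the available regularity, and one must check that the relevant a priori estimates survive when the coefficient $\Feps'(\ueps)\na\veps$ is only Hölder and the boundary datum for $\veps$ is prescribed rather than its flux; for $\eps=0$ the additional wrinkle is that local-in-time control relies on an a priori $L^\infty$ bound on $\ueps$ over short intervals. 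None of this is essentially new, though, and I would phrase the lemma's proof as an application of, e.g., the theory in Amann's or Ladyzhenskaya--Solonnikov--Ural'tseva's monographs combined with the semigroup-based bootstrap that is by now standard in the chemotaxis literature, so the write-up can be kept short by citing analogous statements (such as the Neumann versions used in the works mentioned in the introduction) and indicating only the modifications forced by the Dirichlet condition on $v$.
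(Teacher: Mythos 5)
Your proposal is correct and would work, but it is organized rather differently from what the paper actually does. The paper's entire proof is a citation of Amann's theory for quasilinear parabolic \emph{systems} \cite{amann_nonhomogeneous}: setting $U=(u,\,v-\vst)$, the problem is rewritten as $U_t=\nabla\cdot(A(U)\nabla U)+f(U)$ with the upper-triangular matrix $A(U)=\big(\begin{smallmatrix}1&-U_1F'(U_1)\\0&1\end{smallmatrix}\big)$ and a single mixed boundary operator whose first row encodes the no-flux condition and whose second row encodes the homogeneous Dirichlet condition for $U_2$; Amann's Theorems 14.4 and 14.6 then deliver existence, uniqueness, regularity and the $W^{1,q}$-extensibility criterion in one stroke (radial symmetry and positivity being read off from uniqueness and the maximum principle exactly as you say). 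You instead hand-roll the componentwise construction: the shift $w_\eps=\veps-\vst$, a decoupled Banach fixed point in $C^0\times C^0([0,T];W^{1,q}(\Om))$, semigroup smoothing with the exponent bookkeeping that forces $q>n$, a parabolic bootstrap to $C^{2,1}$, and a contrapositive continuation argument for \eqref{ext}. Both routes are legitimate; the paper's buys brevity and avoids having to verify by hand that the linear conormal problem for $\ueps$ (with the genuinely nonvanishing $\dN\veps$) admits the needed estimates, since Amann's framework absorbs the mixed boundary operator wholesale, whereas your version makes the mechanism (and in particular the role of $q>n$ and of the regularization $\Feps$) explicit. Since you yourself flag at the end that the clean write-up is a citation of Amann, the two proofs converge; the only substantive detail your sketch leaves implicit is the precise system/boundary-operator formulation under which the cited theorems apply, which is exactly the content of the paper's two-line proof.
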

\begin{proof}
  This results from \cite[Thm.~14.4 and Thm.~14.6]{amann_nonhomogeneous} when, for $U=\matr{u\\v-\vst}$, applied to 
  the evolution problem given by $U_t=\nabla\cdot (A(U)\nabla U)+f(U)$, 
  $\left(\matr{1&0\\0&0} A(U)ν + \matr{0&0\\0&1} U\right)\vert_{∂Ω} =0$, $U(0)=\matr{u^{(0)}\\v^{(0)}-\vst}$, 
  with $A(U)=\matr{1&-U_1F'(U_1)\\0&1}$, $f(U)=\matr{0\\-F(U_1)(U_2+\vst)}$.
\end{proof}
These solutions clearly preserve mass in their first component and are bounded in the second.
\begin{lem}\label{lem2}
  Let $\eps\in [0,1)$. Then 
  \be{mass}
	\io \ueps(\cdot,t)=\io u^{(0)}
	\qquad \mbox{for all } t\in (0,\tme)
  \ee
  and
  \be{vinfty}
	\|\veps(\cdot,t)\|_{L^\infty(\Om)} \le \|v^{(0)}\|_{L^\infty(\Om)}
	\qquad \mbox{for all } t\in (0,\tme).
  \ee
\end{lem}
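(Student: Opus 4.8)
The plan is to derive both statements directly from the structure of (\ref{0eps}) and the regularity recorded in Lemma \ref{lem1}, without any delicate estimates.

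For the mass identity (\ref{mass}), I would integrate the first equation of (\ref{0eps}) over $\Om$ and invoke the divergence theorem, which is legitimate since $\ueps(\cdot,t)$ and $\veps(\cdot,t)$ belong to $C^2(\bom)$ for each $t\in(0,\tme)$. This yields
\[
  \frac{d}{dt}\io\ueps(\cdot,t) = \io\Delta\ueps - \io\na\cdot\big(\ueps\Feps'(\ueps)\na\veps\big)
  = \int_{\pO}\Big(\frac{\partial\ueps}{\partial\nu}-\ueps\Feps'(\ueps)\frac{\partial\veps}{\partial\nu}\Big) = 0
  \qquad\text{for all }t\in(0,\tme),
\]
because the no-flux condition on $\pO$ makes the boundary integrand vanish pointwise. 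Since $t\mapsto\io\ueps(\cdot,t)$ is continuous on $[0,\tme)$ with value $\io u^{(0)}$ at $t=0$, integration in time gives (\ref{mass}).

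For the bound (\ref{vinfty}), write $M:=\|v^{(0)}\|_{L^\infty(\Om)}$ and note that since $v^{(0)}\in C^0(\bom)$ with $v^{(0)}=\vst$ on $\pO$, necessarily $\vst\le M$. I would then test the second equation of (\ref{0eps}) against $(\veps-M)_+$, which for each $t\in(0,\tme)$ lies in $W_0^{1,2}(\Om)$ because $\veps-M=\vst-M\le 0$ on $\pO$. After an integration by parts whose boundary contribution drops out, one arrives at
\[
  \frac12\frac{d}{dt}\io(\veps-M)_+^2 = -\io\big|\na(\veps-M)_+\big|^2 - \io\Feps(\ueps)\,\veps\,(\veps-M)_+ \le 0
  \qquad\text{for all }t\in(0,\tme),
\]
since on $\{\veps>M\}$ we have $\veps>M\ge 0$ and $\Feps(\ueps)\ge 0$ by (\ref{F}). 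As $(\veps(\cdot,0)-M)_+=(v^{(0)}-M)_+\equiv 0$, this forces $\io(\veps(\cdot,t)-M)_+^2=0$, hence $\veps\le M$ in $\bom\times(0,\tme)$, which together with the positivity of $\veps$ from Lemma \ref{lem1} is precisely (\ref{vinfty}).

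None of this poses a genuine obstacle; the only points requiring a modicum of care are the justification of the integrations by parts, guaranteed by the regularity in Lemma \ref{lem1}, and the compatibility observation $\vst\le\|v^{(0)}\|_{L^\infty(\Om)}$ built into (\ref{init}). As an alternative to the testing procedure, (\ref{vinfty}) follows immediately from the parabolic comparison principle applied to $w:=\veps-M$, which satisfies $w_t=\Delta w-\Feps(\ueps)w-\Feps(\ueps)M$ with $\Feps(\ueps)M\ge 0$, $w\le 0$ on $\pO\times(0,\tme)$, and $w(\cdot,0)\le 0$.
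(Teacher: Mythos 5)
Your proposal is correct. For \eqref{mass} you argue exactly as the paper does: integrate the first equation in \eqref{0eps} and use the no-flux condition to kill the boundary term. For \eqref{vinfty} your primary route differs slightly from the paper's: you test the second equation with $(\veps-M)_+$, $M=\|v^{(0)}\|_{L^\infty(\Om)}$, and drive $\io(\veps-M)_+^2$ to zero, whereas the paper simply applies the parabolic comparison principle with the constant supersolution $\ov{v}\equiv M$, noting $\ov{v}_t-\Delta\ov{v}+\Feps(\ueps)\ov{v}\ge 0$ together with $\ov{v}|_{t=0}\ge v^{(0)}$ and $\ov{v}|_{\pO}\ge\vst$ --- which is precisely the alternative you mention at the end. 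Both arguments hinge on the same two ingredients: the sign information $\Feps\ge 0$ from \eqref{F} and the compatibility observation $\vst\le\|v^{(0)}\|_{L^\infty(\Om)}$ forced by \eqref{init}, which you correctly flag as the reason $(\veps-M)_+$ lies in $W_0^{1,2}(\Om)$ (respectively, the reason the comparison function dominates on $\pO$). The testing version is marginally more self-contained in that it avoids citing a comparison principle for an equation with a nonsmooth zero-order coefficient, at the cost of a short justification that $t\mapsto\io(\veps-M)_+^2$ is differentiable on $(0,\tme)$ and continuous at $t=0$; the regularity from Lemma \ref{lem1} covers this, so there is no gap.
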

\begin{proof}
  While (\ref{mass}) can directly be seen upon integrating the first equation in (\ref{0eps}), the inequality in (\ref{vinfty})
  can be verified by means of the comparison principle applied to the second equation in (\ref{0eps}), because
  $\ov{v}(x,t):=\|v^{(0)}\|_{L^\infty(\Om)}$, $(x,t)\in\bom\times [0,\infty)$, satisfies 
  $\ov{v}_t- \Delta\ov{v}+\Feps(\ueps)\ov{v}=\Feps(\ueps)\ov{v}\ge 0$ in $\Om\times (0,\tme)$ for all $\eps\in (0,1)$ as well as
  $\ov{v}|_{t=0}\ge v^{(0)}$ and $\ov{v}|_{\pO}\ge \vst$, the latter due to the fact that (\ref{init}) necessarily requires that
  $\vst \le \|v^{(0)}\|_{L^\infty(\Om)}$.
\end{proof}
Also for the gradient of the second solution component some first a priori estimates are available. 
\begin{lem}\label{lem4}
  There exists $C>0$ such that
  \be{4.1}
	\|\na\veps(\cdot,t)\|_{L^1(\Om)} \le C
	\qquad \mbox{for all $t\in (0,\tme)$ and $\eps\in [0,1)$.}
  \ee
\end{lem}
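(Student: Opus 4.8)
The plan is to test the second equation in \eqref{0eps} against a suitable function so as to produce $\io|\na\veps|^2$ (or $\io\frac{|\na\veps|^2}{\veps}$) on the left and only controlled quantities on the right, and then to pass from $L^2$ to $L^1$ on the bounded domain $\Om$ via Cauchy--Schwarz. Concretely, I would first record a differential inequality for $\io|\na\veps|^2$: multiplying $v_{\eps t}=\Del\veps-\Feps(\ueps)\veps$ by $-\Del(\veps-\vst)=-\Del\veps$ (note $\vst$ is constant, and $\veps-\vst$ vanishes on $\pO$, so no boundary terms appear in this particular integration by parts) yields
\[
	\frac12\frac{d}{dt}\io|\na\veps|^2 + \io|\Del\veps|^2 = \io \Feps(\ueps)\veps\,\Del\veps .
\]
The right-hand side is estimated by Young's inequality as $\le\frac12\io|\Del\veps|^2+\frac12\io \Feps(\ueps)^2\veps^2$, and since $0\le\Feps(\ueps)\le\ueps$ and $\|\veps\|_{L^\infty}\le\|v^{(0)}\|_{L^\infty}$ by Lemma~\ref{lem2}, the last term is at most $\frac12\|v^{(0)}\|_{L^\infty}^2\io\ueps^2$. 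This route therefore forces me to control $\io\ueps^2$, which is not available this early; so this is the wrong test function.

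The cleaner approach, and the one I would actually carry out, tests the second equation against $\frac{\veps-\vst}{\veps}$, or equivalently works with $w_\eps:=\veps-\vst$ solving $w_{\eps t}=\Del w_\eps-\Feps(\ueps)(w_\eps+\vst)$ with homogeneous Dirichlet data. Multiplying by $w_\eps$ and integrating gives
\[
	\frac12\frac{d}{dt}\io w_\eps^2 + \io|\na w_\eps|^2 = -\io\Feps(\ueps)(w_\eps+\vst)w_\eps .
\]
Here I use $\Feps(\ueps)\ge0$, so $-\io\Feps(\ueps)w_\eps^2\le0$, while $-\io\Feps(\ueps)\vst w_\eps$ is bounded via $|w_\eps|\le\|v^{(0)}\|_{L^\infty}+\vst=:M$ and $\io\Feps(\ueps)\le\io\ueps=\io u^{(0)}$ (Lemma~\ref{lem2}), hence $-\io\Feps(\ueps)\vst w_\eps\le \vst M\io u^{(0)}=:K$. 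Therefore $\frac{d}{dt}\io w_\eps^2+2\io|\na w_\eps|^2\le 2K$, and since $\io w_\eps^2\le M^2|\Om|$ is already bounded, an integration over $(t,t+1)$ yields $\int_t^{t+1}\io|\na\veps|^2\le\int_t^{t+1}\io|\na w_\eps|^2\le K+\tfrac12 M^2|\Om|$ uniformly in $t$ and $\eps$. Actually, to get a \emph{pointwise-in-$t$} bound on $\|\na\veps(\cdot,t)\|_{L^1}$ rather than a time-averaged one, I would instead argue as follows: by Gronwall-type reasoning the ODI $y'\le 2K$ with $y=\io w_\eps^2$ bounded gives no decay, so I fall back on the standard semigroup smoothing estimate. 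Writing $\veps(\cdot,t)$ via the Dirichlet heat semigroup $(e^{t\Del_D})_{t\ge0}$ applied to $v^{(0)}-\vst$ plus the Duhamel term $-\int_0^t e^{(t-s)\Del_D}[\Feps(\ueps)\veps](\cdot,s)\,ds$, and adding back $\vst$, I use $\|\na e^{\sigma\Del_D}\vp\|_{L^1}\le C(1+\sigma^{-1/2})\|\vp\|_{L^1}$ together with $\|\Feps(\ueps)\veps\|_{L^1}\le\|v^{(0)}\|_{L^\infty}\io\ueps=\|v^{(0)}\|_{L^\infty}\io u^{(0)}$ to obtain
\[
	\|\na\veps(\cdot,t)\|_{L^1(\Om)} \le C(1+t^{-1/2})\|\na v^{(0)}\|_{L^1(\Om)}^{\,0}\cdots
\]
— more carefully, $\|\na\veps(\cdot,t)\|_{L^1}\le\|\na v^{(0)}\|_{L^1}+C\|v^{(0)}\|_{L^\infty}\io u^{(0)}\int_0^t\big(1+(t-s)^{-1/2}\big)e^{-\lambda_1(t-s)}\,ds$, and the time integral is bounded uniformly in $t>0$ because $\int_0^\infty(1+\sigma^{-1/2})e^{-\lambda_1\sigma}\,d\sigma<\infty$. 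This gives the claimed uniform bound \eqref{4.1}.

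The main obstacle is ensuring uniformity in $\eps\in[0,1)$ and the absence of boundary contributions: the key points that make everything go through are (i) $\Feps(\ueps)\ge0$, which lets me discard the unfavourable quadratic term, (ii) mass conservation $\io\Feps(\ueps)\le\io\ueps=\io u^{(0)}$ from Lemma~\ref{lem2}, uniform in $\eps$, (iii) the $L^\infty$-bound on $\veps$ from Lemma~\ref{lem2}, again uniform in $\eps$, and (iv) the fact that $\veps-\vst$ has homogeneous Dirichlet boundary data so that the Dirichlet heat semigroup (and its gradient smoothing estimate, which uses the spectral gap $\lambda_1>0$ of $-\Del_D$ on the ball) is the right tool. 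Radial symmetry plays no role here. The only mildly delicate estimate is the gradient–$L^1$ smoothing bound $\|\na e^{\sigma\Del_D}\vp\|_{L^1(\Om)}\le C(1+\sigma^{-1/2})e^{-\lambda_1\sigma}\|\vp\|_{L^1(\Om)}$, which is standard for the Dirichlet heat semigroup on a smooth bounded domain.
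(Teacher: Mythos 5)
Your final argument is exactly the paper's: represent $\veps-\vst$ by Duhamel's formula for the Dirichlet heat semigroup, bound the forcing term via $\|\Feps(\ueps)\veps\|_{L^1}\le\|u^{(0)}\|_{L^1}\|v^{(0)}\|_{L^\infty}$ (mass conservation plus the $L^\infty$ bound on $\veps$), and use the $L^1$--$L^1$ gradient smoothing estimate with exponential decay to get a bound uniform in $t$ and $\eps$; the paper merely estimates the initial-data term through $\|v^{(0)}-\vst\|_{W^{1,\infty}(\Om)}$ rather than $\|\na v^{(0)}\|_{L^1(\Om)}$, an immaterial difference. The two preliminary testing procedures you sketch and discard are not needed, and your identification of the key uniform-in-$\eps$ ingredients matches the paper's proof.
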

\begin{proof}
  According to well-known smoothing estimates for the Dirichlet heat semigroup $(e^{t\Delta})_{t\ge 0}$ on $\Om$
  (\cite{eidelman_ivasishen}, \cite[Section 48.2]{quittner_souplet}), there exist $\lam>0$, $c_1>0$ and $c_2>0$ such that for all $t>0$,
  \bas
	\|\na e^{t\Delta}\varphi\|_{L^1(\Om)} \le c_1 \|\varphi\|_{W^{1,\infty}(\Om)}
	\qquad \mbox{for all $\varphi \in W^{1,\infty}(\Om)$ such that $\varphi=\vst$ on $\pO$,}
  \eas
  and 
  \bas
	\|\na e^{t\Delta}\varphi\|_{L^1(\Om)} \le c_2 \cdot (1+t^{-\frac{1}{2}}) e^{-\lam t} \|\varphi\|_{L^1(\Om)}
	\qquad \mbox{for all $\varphi \in C^0(\bom)$ such that $\varphi=\vst$ on $\pO$.}
  \eas
  Since (\ref{F}) together with (\ref{mass}) and (\ref{vinfty}) ensures that for all $\eps\in [0,1)$ we have
  $0\le \Feps(\ueps)\le \ueps$ and hence
  \bas
	\|\Feps(\ueps)\veps\|_{L^1(\Om)}
	\le \|\Feps(\ueps)\|_{L^1(\Om)} \|\veps\|_{L^\infty(\Om)}
	\le \|\ueps\|_{L^1(\Om)} \|\veps\|_{L^\infty(\Om)}
	\le c_3:=\|u^{(0)}\|_{L^1(\Om)} \|v^{(0)}\|_{L^\infty(\Om)}
  \eas
  for all $t\in (0,\tme)$, in view of (\ref{0eps}) this implies that for any such $\eps$,
  \bas
	\|\na\veps(\cdot,t)\|_{L^1(\Om)}
	&=& \big\| \na \big(\veps(\cdot,t)-\vst\big)\big\|_{L^1(\Om)} \\
	&=& \bigg\| \na e^{t\Delta} (v^{(0)}-\vst)
	- \int_0^t \na e^{(t-s)\Delta} \Big\{ \Feps\big(\ueps(\cdot,s)\big)\veps(\cdot,s) \Big\} ds \bigg\|_{L^1(\Om)} \\
	&\le& c_1 \|v^{(0)}-\vst\|_{W^{1,\infty}(\Om)}
	+ c_2 c_3 \int_0^t \Big(1+(t-s)^{-\frac{1}{2}}\Big) e^{-\lam (t-s)} ds \\
	&\le& c_1 \|v^{(0)}-\vst\|_{W^{1,\infty}(\Om)}
	+ c_2 c_3 \int_0^\infty (1+\sigma^{-\frac{1}{2}}) e^{-\lam\sigma} d\sigma
	\qquad \mbox{for all } t\in (0,\tme),
  \eas
  which establishes (\ref{4.1}).
\end{proof}
\mysection{Local estimates outside the origin}\label{sect3}
%
%
%
%
%
%
%
%
In line with common abuse of notation, we occasionally write $\ueps(r,t)$ and $\veps(r,t)$,
instead of $\ueps(x,t)$ and $\veps(x,t)$, for $r=|x|\in[0,R]$, $t\in[0,\tme)$ and $\eps\in [0,1)$,
and in order to prepare our derivation of local estimates outside the origin, we 
observe that whenever $\chi\in C_0^\infty((0,R])$, for each $\eps\in [0,1)$ we have
\be{chiv}
	\Big( \chi \cdot (\veps-\vst)\Big)_t
	= \Big( \chi\cdot (\veps-\vst)\Big)_{rr}
	+ \beps(r,t),
	\qquad r\in (0,R), \ t\in (0,\tme),
\ee
where
\bea{b}
	& & \hs{-20mm}
	\beps(r,t) \equiv \beps^{(\chi)}(r,t)
	:= \Big( \frac{n-1}{r}\chi(r)-2\chi_r(r)\Big) v_{\eps r}(r,t) 
	- \chi_{rr}(r) \cdot \big(\veps(r,t)-\vst\big) \nn\\
	& & \hs{40mm}
	- \chi(r)\cdot \Feps\big(\ueps(r,t)\big) \veps(r,t),
	\qquad r\in (0,R), \ t\in (0,\tme).
\eea
As the above basic estimates imply $L^1$ bounds for $\beps$, a straightforward argument based on parabolic smoothing
in the one-dimensional heat equation (\ref{chiv}) yields the following information on regularity of the taxis gradient 
outside the origin.
\begin{lem}\label{lem5}
  Let $q\in (1,\infty)$ and $\delta\in (0,R)$. Then there exists $C(q,\delta)>0$ with the property that
  \be{5.1}
	\|v_{\eps r}(\cdot,t)\|_{L^q((\delta,R))} \le C(q,\delta)
	\qquad \mbox{for all $t\in (0,\tme)$ and $\eps\in [0,1)$.}
  \ee
\end{lem}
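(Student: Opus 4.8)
The plan is to exploit the one-dimensional heat equation \eqref{chiv} satisfied by $w_\eps:=\chi\cdot(\veps-\vst)$ on the interval $(0,R)$ together with the $L^1$ information on the inhomogeneity $\beps$ that follows from the basic estimates. First I would fix a cutoff function $\chi\in C_0^\infty((0,R])$ with $\chi\equiv 1$ on $(\delta,R)$ and $0\le\chi\le 1$, so that $v_{\eps r}=w_{\eps r}$ on $(\delta,R)$, and note that by construction $w_\eps$ vanishes near $r=0$ and satisfies a homogeneous Dirichlet condition at $r=R$ (since $\veps=\vst$ there). Thus $w_\eps$ solves the one-dimensional Dirichlet problem on $(0,R)$ and can be represented via the corresponding heat semigroup $(e^{s\Del_{(0,R)}})_{s\ge0}$:
\[
 w_\eps(\cdot,t)=e^{t\Del_{(0,R)}}w_\eps(\cdot,0)+\int_0^t e^{(t-s)\Del_{(0,R)}}\beps(\cdot,s)\,ds.
\]
Since $w_\eps(\cdot,0)=\chi\cdot(v^{(0)}-\vst)\in W^{1,\infty}((0,R))$, the first term is harmless and bounded in any $W^{1,q}$.

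Next I would estimate $\|\beps(\cdot,s)\|_{L^1((0,R))}$ uniformly in $s\in(0,\tme)$ and $\eps\in[0,1)$. Inspecting \eqref{b}, the three contributions are controlled as follows: the term involving $v_{\eps r}$ carries the bounded factor $\frac{n-1}{r}\chi(r)-2\chi_r(r)$ (bounded because $\chi$ is supported away from the origin), and $\|v_{\eps r}(\cdot,s)\|_{L^1((0,R))}$ is bounded by Lemma \ref{lem4} (in its essentially one-dimensional form, after accounting for the weight $r^{n-1}$, which is comparable to a constant on $\supp\chi$); the term $\chi_{rr}\cdot(\veps-\vst)$ is bounded by $\|v^{(0)}\|_{L^\infty}+\vst$ via Lemma \ref{lem2}; and the reaction term $\chi\cdot\Feps(\ueps)\veps$ is bounded in $L^1$ since $\Feps(\ueps)\le\ueps$, $\io\ueps=\io u^{(0)}$ by \eqref{mass}, and $\|\veps\|_{L^\infty}$ is controlled by \eqref{vinfty}. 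Hence there is $c_0>0$, independent of $t$ and $\eps$, with $\|\beps(\cdot,s)\|_{L^1((0,R))}\le c_0$ for all $s\in(0,\tme)$.

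Then I would apply the standard smoothing estimate for the one-dimensional Dirichlet heat semigroup: there are $\lam>0$ and $c_1>0$ such that $\|\pa_r e^{s\Del_{(0,R)}}\varphi\|_{L^q((0,R))}\le c_1(1+s^{-\frac12-\frac12(1-\frac1q)})e^{-\lam s}\|\varphi\|_{L^1((0,R))}$ for all $\varphi\in C^0$ vanishing on $\{0,R\}$ (as documented, e.g., in \cite{eidelman_ivasishen}, \cite[Section 48.2]{quittner_souplet} and already used in the proof of Lemma \ref{lem4}). Since for any $q\in(1,\infty)$ the exponent $\frac12+\frac12(1-\frac1q)=1-\frac1{2q}$ is strictly less than $1$, the function $\sigma\mapsto(1+\sigma^{-1+\frac1{2q}})e^{-\lam\sigma}$ is integrable on $(0,\infty)$, so differentiating the Duhamel formula and taking $L^q$ norms gives
\[
 \|w_{\eps r}(\cdot,t)\|_{L^q((0,R))}
 \le c_1\|w_\eps(\cdot,0)\|_{W^{1,q}}+c_1 c_0\int_0^\infty\big(1+\sigma^{-1+\frac1{2q}}\big)e^{-\lam\sigma}\,d\sigma
 =:C(q,\delta)
\]
uniformly in $t\in(0,\tme)$ and $\eps\in[0,1)$. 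Restricting to $(\delta,R)$, where $w_{\eps r}=v_{\eps r}$, yields \eqref{5.1}.

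The main obstacle is a bookkeeping rather than a conceptual one: one must be careful that \eqref{chiv} really is the \emph{one-dimensional} (Cartesian, not radial) heat equation on $(0,R)$ — which is exactly why the extra $\frac{n-1}{r}$ term and the $\chi_r,\chi_{rr}$ commutator terms were packed into $\beps$ — and one must check that the relevant smoothing estimates hold for the Dirichlet Laplacian on the interval with the singularity-rate $s^{-1+\frac1{2q}}$ that keeps the time integral finite for every $q<\infty$; the nonzero boundary value $\vst$ is absorbed once and for all by passing to $w_\eps$, which vanishes at both endpoints. Everything else reduces to the a priori bounds of Lemmas \ref{lem2} and \ref{lem4} together with the mass identity \eqref{mass}.
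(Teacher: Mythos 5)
Your argument is correct and follows essentially the same route as the paper: localize with a cutoff supported away from the origin, view $\chi\cdot(\veps-\vst)$ as a solution of a one-dimensional Dirichlet heat equation, bound the inhomogeneity $\beps$ in $L^1$ via Lemmas \ref{lem2} and \ref{lem4} together with \eqref{mass}, and conclude by Duhamel and the $L^1\to W^{1,q}$ smoothing of the Dirichlet heat semigroup (the paper works on $(\tfrac{\delta}{2},R)$ rather than $(0,R)$, which is immaterial). Your singularity exponent $t^{-1+\frac{1}{2q}}$ is the standard one and, as you note, integrable for every $q<\infty$, so the proof goes through exactly as in the paper.
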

\begin{proof}
  Given $\delta\in (0,R)$, we fix $\chi\in C^\infty([0,R])$ such that $0\le\chi\le 1$, that $\chi\equiv 0$ in $[0,\frac{\delta}{2}]$,
  and that $\chi\equiv 1$ in $[\delta,R]$, and let $(e^{-tA})_{t\ge 0}$ denote the one-dimensional heat semigroup generated
  by the operator $A:=-(\cdot)_{rr}$ under homogeneous Dirichlet boundary conditions on $(\frac{\delta}{2},R)$.
  Then known regularization features of the latter (\cite{eidelman_ivasishen}, \cite{quittner_souplet}) ensure that if we fix 
  $q\in (1,\infty)$, then we can find $\lam=\lam(q,\delta)>0, c_1=c_1(q,\delta)>0$ and $c_2=c_2(q,\delta)>0$ such that whenever $t>0$,
  \be{5.2}
	\|\partial_r e^{-tA}\varphi\|_{L^q((\frac{\delta}{2},R))}
	\le c_1\|\varphi\|_{W^{1,\infty}((\frac{\delta}{2},R))}
	\qquad \mbox{for all $\varphi\in W^{1,\infty}((\frac{\delta}{2},R))$ such that $\varphi(\frac{\delta}{2})=\varphi(R)=0$}
  \ee
  and
  \be{5.3}
	\|\partial_r e^{-tA}\varphi\|_{L^q((\frac{\delta}{2},R))}
	\le c_2 \cdot \big(1+t^{-1+\frac{1}{q}}\big) e^{-\lam t} \|\varphi\|_{L^1((\frac{\delta}{2},R))}
	\qquad \mbox{for all $\varphi\in C^0([\frac{\delta}{2},R])$ with $\varphi(\frac{\delta}{2})=\varphi(R)=0$}.
  \ee
  Apart from that, a combination of Lemma \ref{lem4} with (\ref{mass}) and (\ref{vinfty}) shows that since 
  $\supp \chi \subset [\frac{\delta}{2},R]$, we can pick $c_3=c_3(\delta)>0$ in such a way that for any $\eps\in [0,1)$, the 
  function $\beps=\beps^{(\chi)}$ defined in (\ref{b}) satisfies
  \bas
	\|\beps(\cdot,t)\|_{L^1((\frac{\delta}{2},R))} \le c_3
	\qquad \mbox{for all } t\in (0,\tme).
  \eas
  On the basis of (\ref{chiv}) and the fact that $\chi \cdot (\veps-\vst)=0$ on $\{\frac{\delta}{2},R\} \times (0,\tme)$,
  we can therefore utilize (\ref{5.2}) and (\ref{5.3}) to estimate
  \bea{5.4}
	\Big\| \partial_r \Big\{ \chi \cdot \big(\veps(\cdot,t)-\vst\big) \Big\} \Big\|_{L^q((\frac{\delta}{2},R))}
	&=& \bigg\| \partial_r e^{-tA} \Big\{ \chi \cdot (v^{(0)}-\vst)\Big\}
	+ \int_0^t \partial_r e^{-tA} \beps(\cdot,s) ds \bigg\|_{L^q((\frac{\delta}{2},R))} \nn\\
	&\le& c_1 \big\| \chi \cdot (v^{(0)}-\vst)\big\|_{W^{1,\infty}((\frac{\delta}{2},R))} \nn\\
	& & + c_2 \int_0^t \Big(1+(t-s)^{-1+\frac{1}{q}}\Big) e^{-\lam (t-s)} \|\beps(\cdot,s)\|_{L^1((\frac{\delta}{2},R))} ds \nn\\
	&\le& c_1 \big\| \chi \cdot (v^{(0)}-\vst)\big\|_{W^{1,\infty}((\frac{\delta}{2},R))}
	+ c_2 c_3 c_4
  \eea
  for all $t\in (0,\tme)$ and $\eps\in [0,1)$,
  with $c_4:=\int_0^\infty(1+\sigma^{-1+\frac{1}{q}}) e^{-\lam\sigma} d\sigma$ being finite since we are assuming that $q<\infty$.
  As $\chi\equiv 1$ in $[\delta,R]$, (\ref{5.1}) directly results from (\ref{5.4}).
\end{proof}
This has consequences for bounds on certain powers of $\ueps$ and their derivative outside a neighbourhood of the spatial origin.
\begin{lem}\label{lem6}
  For any choice of $p\in (0,1)$ and $\delta\in (0,R)$ one can find $C(p,\delta)>0$ such that whenever $\eps\in [0,1)$,
  \be{6.1}
	\int_t^{t+\te} \int_\delta^R \Big| \big(\ueps^\frac{p}{2}\big)_r \Big|^2 \le C(p,\delta)
	\qquad \mbox{for all $t\in [0,\tme-\te)$,}
  \ee
  and that
  \be{6.2}
	\int_t^{t+\te} \int_\delta^R \ueps^{p+2} \le C(p,\delta)
	\qquad \mbox{for all $t\in [0,\tme-\te)$,}
  \ee
  where $\te:=\min\{1,\frac{1}{2}\tme\}$.
\end{lem}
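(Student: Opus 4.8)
The plan is to derive a spatially localized $L^2$-in-time estimate for $(\ueps^{p/2})_r$ on $(\delta,R)$ by testing a suitably weighted version of the first equation in (\ref{0eps}) against $\ueps^{p-1}$ times a cutoff, exploiting $p<1$ so that the diffusion term produces a \emph{good} sign and the lower-order mass term gives an absolutely integrable contribution. Concretely, fix a cutoff $\chi\in C_0^\infty((\frac\delta2,R])$ with $\chi\equiv 1$ on $[\delta,R]$ (as in Lemma \ref{lem5}), and multiply the radial form of $u_{\eps t}=\frac1{r^{n-1}}(r^{n-1}(u_{\eps r}-\ueps\Feps'(\ueps)v_{\eps r}))_r$ by $\ueps^{p-1}\chi^2 r^{n-1}$ and integrate over $(0,R)\times(t,t+\te)$. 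The time-derivative term yields $\io \ueps^p\chi^2$-type boundary-in-time contributions which are controlled by (\ref{mass}) together with $\ueps^p\le 1+\ueps$ (so $\int_0^R\ueps^p\chi^2 r^{n-1}\le C$ uniformly). The diffusion term produces $c(p)\int_t^{t+\te}\int \ueps^{p-2}u_{\eps r}^2\chi^2 r^{n-1}=c(p)\int_t^{t+\te}\int |(\ueps^{p/2})_r|^2\chi^2 r^{n-1}$ with a positive constant $c(p)=\frac{4(1-p)}{p}$, which is exactly the quantity we want to bound (after restricting to $[\delta,R]$ and noting $r^{n-1}\ge\delta^{n-1}$ there).

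The remaining terms are absorbed. The taxis term, after integration by parts, contributes something of the form $-(p-1)\int \ueps^{p-1}\Feps'(\ueps)v_{\eps r}u_{\eps r}\chi^2 r^{n-1}$ plus a term with $\chi\chi_r$; using $0\le\Feps'(\ueps)\le 1$ (from (\ref{F})) and Young's inequality, the first of these is bounded by $\tfrac{c(p)}{2}\int|(\ueps^{p/2})_r|^2\chi^2 r^{n-1}+C\int \ueps^p v_{\eps r}^2\chi^2 r^{n-1}$, and since $p<1$ Young's inequality again gives $\ueps^p v_{\eps r}^2\le \ueps^{p+2}\cdot(\text{small})+C\,v_{\eps r}^{\,2/(1-p)}\cdot(\ldots)$ — but it is cleaner to bound $\ueps^p\le 1+\ueps$ and use $\int_t^{t+\te}\int_{\delta/2}^R v_{\eps r}^2$ bounded by Lemma \ref{lem5} (with $q=2$) and $\int_t^{t+\te}\int_{\delta/2}^R \ueps v_{\eps r}^2$ handled by Hölder together with Lemma \ref{lem5} for a large exponent $q$ and the first-component bounds; alternatively split off the good diffusion term and absorb. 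Once the localized Dirichlet integral (\ref{6.1}) is in hand, (\ref{6.2}) follows by a Gagliardo--Nirenberg interpolation in the single spatial variable $r\in(\delta,R)$: writing $w=\ueps^{p/2}$, one has $w\in L^\infty((t,t+\te);L^{2/p}((\delta,R)))$ (from mass conservation, since $\int w^{2/p}=\int\ueps$) and $w_r\in L^2((\delta,R)\times(t,t+\te))$, and the one-dimensional inequality $\|w\|_{L^{(2p+4)/p}((\delta,R))}\le C\|w_r\|_{L^2}^{\theta}\|w\|_{L^{2/p}}^{1-\theta}+C\|w\|_{L^{2/p}}$ with the right $\theta$ gives $\int_t^{t+\te}\int_\delta^R \ueps^{p+2}=\int_t^{t+\te}\|w\|_{L^{(2p+4)/p}}^{(2p+4)/p}\le C$, the exponent working out because $\theta\cdot\frac{2p+4}{p}=2$.

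The main obstacle I anticipate is \textbf{controlling the cross term $\int \ueps^p v_{\eps r}^2 \chi^2 r^{n-1}$} arising from the chemotactic drift: it mixes a low power of $\ueps$ (which is the quantity we are trying to estimate) with the gradient $v_{\eps r}$, for which at this stage we only have the uniform-in-$\eps$ $L^q_x$ bounds of Lemma \ref{lem5}, not yet any space-time integrability with a good power. The saving grace is precisely that $p\in(0,1)$, so $\ueps^p\le 1+\ueps$ and the $\ueps$-part can be paired via Hölder with a high power of $v_{\eps r}$ that Lemma \ref{lem5} supplies (any $q<\infty$), while the $1$-part just needs $v_{\eps r}\in L^2$; one must be slightly careful that all constants depend only on $p$ and $\delta$ (not on $t$ or $\eps$), which is guaranteed since every ingredient — (\ref{mass}), (\ref{vinfty}), Lemma \ref{lem5} — is uniform in $t\in(0,\tme)$ and $\eps\in[0,1)$, and since on $[\delta/2,R]$ the weight $r^{n-1}$ is bounded above and below. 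A minor technical point is justifying the testing procedure when $\ueps^{p-1}$ is singular where $\ueps$ is small; this is handled in the usual way by replacing $\ueps^{p-1}$ with $(\ueps+\sigma)^{p-1}$, carrying out the computation, and letting $\sigma\downarrow 0$ using $\ueps>0$ and monotone/dominated convergence.
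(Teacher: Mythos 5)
Your skeleton is the same as the paper's: test the first equation in \eqref{0eps} with $\zeta^2\ueps^{p-1}$ for a radial cutoff $\zeta$ supported away from the origin, exploit $p<1$ so that the diffusion contribution to $\frac{d}{dt}\frac1p\io\zeta^2\ueps^p$ carries the dissipation $\int\zeta^2\ueps^{p-2}|\na\ueps|^2$ with a favourable sign, bound the functional itself through mass conservation, and then deduce \eqref{6.2} from \eqref{6.1} by one-dimensional Gagliardo--Nirenberg combined with $\int_\delta^R\ueps\,dr\le\delta^{1-n}\io u^{(0)}$; your exponent check $\theta\cdot\frac{2(p+2)}{p}=2$ is correct, and the $\sigma$-regularization of $\ueps^{p-1}$ is unnecessary since $\ueps>0$ on $\bom\times(0,\tme)$ by Lemma~\ref{lem1}.

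However, your handling of the cross term $\io\zeta^2\ueps^p|\na\veps|^2$ --- which you rightly identify as the crux --- does not close as written. The route you settle on as ``cleaner'' ($\ueps^p\le 1+\ueps$, then H\"older on $\int\ueps v_{\eps r}^2$ using Lemma~\ref{lem5} with a large exponent and ``the first-component bounds'') is unavailable at this stage: the only time-uniform bound on $\ueps$ is the $L^1$ bound \eqref{mass}, and pairing $\ueps\in L^1_x$ against $v_{\eps r}^2$ would require $v_{\eps r}\in L^\infty_x$, which Lemma~\ref{lem5} does not give (only every finite $q$); the uniform $L^\infty$ bound on $v_{\eps r}$ is Lemma~\ref{lem9}, proved later and resting on Lemma~\ref{lem8}, which itself uses the present lemma, so that route is circular. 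Your first suggestion has mismatched Young exponents: the conjugate pair producing $|\na\veps|^{2/(1-p)}$ raises $\ueps^p$ only to $\ueps^1$, not to $\ueps^{p+2}$ (the pair producing $\ueps^{p+2}$ yields $|\na\veps|^{p+2}$ instead), and absorbing a small multiple of $\int\zeta^2\ueps^{p+2}$ into the dissipation before \eqref{6.1} is known would need an extra cutoff-compatible interpolation that you do not supply. The repair is exactly the $\frac1p$/$\frac1{1-p}$ splitting you almost wrote: pointwise Young $\ueps^p|\na\veps|^2\le\ueps+|\na\veps|^{2/(1-p)}$, or, as in the paper, H\"older in the form
\begin{equation*}
 \io\zeta^2\ueps^p|\na\veps|^2\le\bigg\{\io\ueps\bigg\}^p\cdot\bigg\{\io\zeta^\frac{2}{1-p}|\na\veps|^\frac{2}{1-p}\bigg\}^{1-p},
\end{equation*}
where the first factor is controlled by \eqref{mass} and the second by Lemma~\ref{lem5} with $q=\frac{2}{1-p}<\infty$ --- this is precisely where $p<1$ enters; the term $\io|\na\zeta|^2\ueps^p$ is treated in the same way. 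With this correction your argument coincides with the paper's proof.
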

\begin{proof}
  We fix $\zeta\in C^\infty(\bom)$ such that $0\le\zeta\le 1$, and that $\zeta\equiv 0$ in $\ov{B}_\frac{\delta}{2}(0)$
  as well as $\zeta\equiv 1$ in $\bom\setminus \ov{B}_\delta(0)$.
  Relying on the positivity of $\ueps$ in $\bom\times (0,\tme)$ for $\eps\in [0,1)$, from (\ref{0eps}) we then obtain that due to
  Young's inequality,
  \bea{6.3}
	\frac{1}{p} \frac{d}{dt} \io \zeta^2 \ueps^p
	&=& \io \zeta^2 \ueps^{p-1} \na \cdot \Big\{ \na\ueps-\ueps\Feps'(\ueps)\na\veps\Big\} \nn\\
	&=& (1-p) \io \zeta^2 \ueps^{p-2} |\na\ueps|^2
	-(1-p) \io \zeta^2 \ueps^{p-1} \Feps'(\ueps) \na\ueps\cdot\na\veps \nn\\
	& & - 2\io \zeta \ueps^{p-1} \na\ueps\cdot\na\zeta 
	+  2 \io \zeta\ueps^p \Feps'(\ueps) \na\veps\cdot\na\zeta \nn\\
	&\ge& \frac{1-p}{2} \io \zeta^2 \ueps^{p-2} |\na\ueps|^2
	- (2-p) \io \zeta^2 \ueps^p |\na\veps|^2 \nn\\
	& & - \frac{5-p}{1-p} \io |\na\zeta|^2 \ueps^p
	\qquad \mbox{for all $t\in (0,\tme)$ and } \eps\in [0,1),
  \eea
  because $0\le\Feps'\le 1$ for all $\eps\in [0,1)$.
  Here by the H\"older inequality,
  \bas
	\io \zeta^2 \ueps^p |\na\veps|^2
	\le \bigg\{ \io \ueps \bigg\}^p \cdot \bigg\{ \io \zeta^\frac{2}{1-p} |\na\veps|^\frac{2}{1-p}\bigg\}^{1-p}
	\qquad \mbox{for all $t\in (0,\tme)$ and } \eps\in [0,1)
  \eas
  and
  \bas
	\io |\na\zeta|^2 \ueps^p
	\le \|\na\zeta\|_{L^\frac{2}{1-p}(\Om)}^2 \cdot \bigg\{ \io \ueps \bigg\}^p
	\qquad \mbox{for all $t\in (0,\tme)$ and } \eps\in [0,1),
  \eas
  so that since $\supp \zeta\subset \bom\setminus B_\frac{\delta}{2}(0)$ we may apply Lemma \ref{lem5} to $q:=\frac{2}{1-p}$ to see
  that thanks to (\ref{mass}), with some $c_1=c_1(p,\delta)>0$ we have
  \bas
	(2-p)\io \zeta^2 \ueps^p |\na\veps|^2
	+ \frac{5-p}{1-p} \io |\na\zeta|^2 \ueps^p
	\le c_1
	\qquad \mbox{for all $t\in (0,\tme)$ and } \eps\in [0,1).
  \eas
  Therefore, an integration in (\ref{6.3}) shows that again due to the H\"older inequality and (\ref{mass}),
  \bas
	\frac{1-p}{2} \int_t^{t+\te} \int_{\Om\setminus B_\delta(0)} \ueps^{p-2} |\na\ueps|^2
	&\le& \frac{1-p}{2} \int_t^{t+\te} \io \zeta^2 \ueps^{p-2} |\na\ueps|^2 \\
	&\le& \frac{1}{p} \io \zeta^2 \ueps^p(\cdot,t-\te)
	- \frac{1}{p} \io \zeta^2 \ueps^p(\cdot,t) 
	+ c_1\te \\
	&\le& \frac{1}{p} \cdot \bigg\{ \io u^{(0)} \bigg\}^p 
	+ c_1
	\quad \mbox{for all $t\in [0,\tme-\te)$ and } \eps\in [0,1),
  \eas
  because $\te\le 1$.
  This implies (\ref{6.1}), whereupon (\ref{6.2}) readily results from (\ref{6.1}) according to the fact that the Gagliardo-Nirenberg
  inequality provides $c_2=c_2(p,\delta)>0$ fulfilling
  \bas
	\int_\delta^R \ueps^{p+2}
	&=& \|\ueps^\frac{p}{2}\|_{L^\frac{2(p+2)}{p}((\delta,R))}^\frac{2(p+2)}{p} \\
	&\le& c_2\Big\| \big(\ueps^\frac{p}{2}\big)_r \Big\|_{L^2((\delta,R))}^2
		\big\|\ueps^\frac{p}{2}\big\|_{L^\frac{2}{p}((\delta,R))}^\frac{4}{p}
	+ c_2 \big\|\ueps^\frac{p}{2}\big\|_{L^\frac{2}{p}((\delta,R))}^\frac{2(p+2)}{p}
	\quad \mbox{for all $t\in (0,\tme)$ and } \eps\in [0,1),
  \eas
  and because for all $t\in (0,\tme)$ and $\eps\in [0,1)$,
  \bas
	\big\|\ueps^\frac{p}{2}\big\|_{L^\frac{2}{p}((\delta,R))}^\frac{2}{p}
	= \int_\delta^R \ueps(r,t) dr 
	\le \delta^{1-n} \int_0^R r^{n-1} \ueps(r,t) dr
	= \delta^{1-n} \int_0^R r^{n-1} u^{(0)}(r) dr
  \eas
  by (\ref{mass}).
\end{proof}
In contrast to settings with homogeneous boundary conditions, in the present situation it will become necessary to deal with non-vanishing boundary terms. While this section will culminate in corresponding estimates, a key to these becomes visible in the following corollary already.
\begin{cor}\label{cor7}
  There exists $C>0$ such that
  \be{7.1}
	\int_t^{t+\te} \ueps(R,s) ds \le C
	\qquad \mbox{for all $t\in [0,\tme-\te)$,}
  \ee
  where again $\te=\min\{1,\frac{1}{2}\tme\}$ for $\eps\in [0,1)$.
\end{cor}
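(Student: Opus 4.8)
The plan is to derive the boundary bound on $\ueps(R,s)$ by localizing the first equation near $r=R$, using the no-flux boundary condition together with the already established interior estimate (\ref{6.2}) from Lemma \ref{lem6} (with, say, $p$ close to $0$, on an annulus $(\delta,R)$ with $\delta$ fixed), the $L^q$ bound on $v_{\eps r}$ from Lemma \ref{lem5}, and mass conservation (\ref{mass}).

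First I would fix some $\delta\in (0,R)$ and a cutoff $\zeta\in C^\infty(\bom)$ with $0\le\zeta\le 1$, $\zeta\equiv 0$ near $\ov{B}_\delta(0)$ and $\zeta\equiv 1$ near $\pO$. Testing the first equation in (\ref{0eps}) by $\zeta$ (or equivalently integrating the radial form $r^{n-1}(\ueps)_t = (r^{n-1}(\ueps)_r - r^{n-1}\ueps\Feps'(\ueps)v_{\eps r})_r$ against $\zeta$) and using the boundary condition $\frac{\partial\ueps}{\partial\nu}-\ueps\Feps'(\ueps)\frac{\partial\veps}{\partial\nu}=0$ at $r=R$, one obtains, after integration by parts in $r$, an identity of the schematic form
\bas
  \frac{d}{dt}\io \zeta\ueps + R^{n-1}\omega_n\,\Big[\ueps(R,t) - \ueps(R,t)\Feps'(\ueps(R,t))\cdot 0\Big]
  = \io \Big(\Delta\zeta\Big)\ueps + \io \ueps\Feps'(\ueps)\na\veps\cdot\na\zeta,
\eas
so that the flux boundary term reduces, and one is left with a control of $\ueps(R,t)$ by $-\frac{d}{dt}\io\zeta\ueps$ plus two volume integrals supported in the annulus $\frac{\delta}{2}<|x|<R$ where $\na\zeta\ne 0$. (One must be slightly careful here: the normal derivative of $\veps$ at $r=R$ need not vanish, but the boundary flux of $\ueps$ is exactly $\ueps\Feps'(\ueps)\frac{\partial\veps}{\partial\nu}$ there, and the combination $\frac{\partial\ueps}{\partial\nu}-\ueps\Feps'(\ueps)\frac{\partial\veps}{\partial\nu}$ vanishes, so the net contribution of the boundary to the evolution of $\io\zeta\ueps$ is $\omega_n R^{n-1}\zeta(R)\cdot 0=0$ once one recalls $\zeta\ueps$ satisfies a divergence-form equation — this is the step that makes the construction work and is worth stating carefully.)

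Next I would integrate the resulting differential inequality over $(t,t+\te)$ with $\te=\min\{1,\frac12\tme\}$. The telescoping term $\io\zeta\ueps(\cdot,t+\te)-\io\zeta\ueps(\cdot,t)$ is bounded in absolute value by $2\io u^{(0)}$ via (\ref{mass}); the volume integral $\int_t^{t+\te}\io|\Delta\zeta|\ueps$ is bounded by $\|\Delta\zeta\|_{L^\infty}\cdot\te\cdot\io u^{(0)}$; and the taxis term $\int_t^{t+\te}\io \ueps\Feps'(\ueps)|\na\veps||\na\zeta|$ is estimated, using $0\le\Feps'\le 1$, by Hölder on the annulus $(\delta/2,R)$ as $\int_t^{t+\te}\|\ueps\|_{L^{p'}}\|v_{\eps r}\|_{L^{q'}}$ with conjugate exponents chosen so that both norms are controlled — e.g. $\|\na\veps\|_{L^4}$ from Lemma \ref{lem5} with $q=4$ and $\ueps$ in $L^{4/3}$ on the annulus, the latter being dominated by the $L^{p+2}$ bound of (\ref{6.2}) (choosing $p$ with $p+2\ge 4/3$, e.g. $p=\frac14$) together with a further Hölder/Young step in time. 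All of these bounds are uniform in $\eps\in[0,1)$ and in $t\in[0,\tme-\te)$, which yields (\ref{7.1}).

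The main obstacle, I expect, is the bookkeeping around the boundary term: one has to be sure that replacing $\ueps$ by the divergence-form flux makes the Dirichlet-datum-induced contribution of $\veps$ disappear from the $r=R$ endpoint (so that one really gets a clean nonnegative $+\,c\int\ueps(R,s)$ term and not an extra $\ueps(R,s)\,v_{\eps r}(R,s)$ term that would be hard to control, since $v_{\eps r}$ at the boundary is not covered by Lemma \ref{lem5}). Everything else is a routine combination of (\ref{mass}), Lemma \ref{lem5}, and (\ref{6.2}).
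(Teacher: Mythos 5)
There is a genuine gap at the central step of your argument. When you test the first equation in (\ref{0eps}) with a cutoff $\zeta$ that equals $1$ near $\pO$, the entire boundary contribution is
\bas
	\int_{\pO} \zeta\cdot\Big( \frac{\partial\ueps}{\partial\nu} - \ueps\Feps'(\ueps)\frac{\partial\veps}{\partial\nu}\Big),
\eas
which vanishes identically by the no-flux condition --- exactly as you observe in your own parenthetical remark. Consequently \emph{no} term of the form $\omega_n R^{n-1}\ueps(R,t)$ ever appears; the correct outcome of the computation is simply
$\frac{d}{dt}\io\zeta\ueps = \io\ueps\Delta\zeta + \io\ueps\Feps'(\ueps)\na\veps\cdot\na\zeta$,
which upon time integration yields information already contained in (\ref{mass}) and says nothing about the trace $\ueps(R,\cdot)$. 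Your displayed ``schematic identity'' is internally inconsistent: it posits a boundary term $R^{n-1}\omega_n\,\ueps(R,t)$ on the left while simultaneously asserting that the net boundary contribution is zero. Integration by parts against a smooth cutoff can only ever produce the \emph{flux} $\partial_\nu\ueps-\ueps\Feps'(\ueps)\partial_\nu\veps$ at $r=R$, never the pointwise \emph{value} $\ueps(R,t)$, so this route cannot yield (\ref{7.1}).

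What is needed instead is a trace-type inequality, and this is how the paper proceeds: by the one-dimensional Gagliardo--Nirenberg embedding on $(\frac{R}{2},R)$ one has
$\ueps^{\frac32}(R,s) \le \|\ueps^{\frac14}(\cdot,s)\|_{L^\infty((\frac{R}{2},R))}^6
\le c_1\|(\ueps^{\frac14})_r\|_{L^2((\frac{R}{2},R))}^2\|\ueps^{\frac14}\|_{L^4((\frac{R}{2},R))}^4 + c_1\|\ueps^{\frac14}\|_{L^4((\frac{R}{2},R))}^6$,
and after integrating in time this is controlled by Lemma \ref{lem6} with $p=\frac12$ (which bounds $\int_t^{t+\te}\int_{R/2}^R|(\ueps^{\frac14})_r|^2$) together with $\|\ueps^{\frac14}\|_{L^4((\frac{R}{2},R))}^4=\int_{R/2}^R\ueps\le C$ from (\ref{mass}); the H\"older inequality in time then converts the resulting bound on $\int_t^{t+\te}\ueps^{3/2}(R,s)\,ds$ into (\ref{7.1}). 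Your chosen ingredients (Lemma \ref{lem6} and mass conservation) are the right ones, but they must enter through such an embedding applied near the boundary, not through a testing procedure whose boundary term is annihilated by the very boundary condition you invoke.
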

\begin{proof}
  By means of the Gagliardo-Nirenberg inequality, we can pick $c_1>0$ with the property that
  \bas
	\|\varphi\|_{L^\infty((\frac{R}{2},R))}^6
	\le c_1\|\varphi_r\|_{L^2((\frac{R}{2},R))}^2 \|\varphi\|_{L^4((\frac{R}{2},R))}^4
	+ c_1\|\varphi\|_{L^4((\frac{R}{2},R))}^6
	\qquad \mbox{for all $\varphi\in W^{1,2}((\frac{R}{2},R))$,}
  \eas
  whence
  \bas
	\int_t^{t+\te} \ueps^\frac{3}{2}(R,s) ds
	&\le& \int_t^{t+\te} \big\| \ueps^\frac{1}{4}(\cdot,s)\big\|_{L^\infty((\frac{R}{2},R))}^6 ds \\
	&\le& c_1 \int_t^{t+\te} \Big\| \big(\ueps^\frac{1}{4}\big)_r(\cdot,s) \Big\|_{L^2((\frac{R}{2},R))}^2
		\big\|\ueps^\frac{1}{4}(\cdot,s)\big\|_{L^4((\frac{R}{2},R))}^4 ds \\
	& & + c_1\int_t^{t+\te} \big\|\ueps^\frac{1}{4}(\cdot,s)\big\|_{L^4((\frac{R}{2},R))}^6 ds
	\qquad \mbox{for all $t\in [0,\tme-\te)$ and } \eps\in [0,1).
  \eas
  Combining (\ref{mass}) with an application of Lemma \ref{lem6} to $p:=\frac{1}{2}$ thus shows that with some $c_2>0$ we have
  \bas
	\int_t^{t+\te} \ueps^\frac{3}{2}(R,s) ds \le c_2
	\qquad \mbox{for all $t\in [0,\tme-\te)$ and } \eps\in [0,1),
  \eas
  from which (\ref{7.1}) follows upon employing the H\"older inequality.
\end{proof}
The following elementary observation, a proof of which can be found in \cite[Lemma 3.4]{win_JFA}, will be referred to
in Lemma~\ref{lem8}, Lemma \ref{lem11} and Lemma \ref{lem14}.
\begin{lem}\label{lem77}
  Let $T\in (0,\infty]$ and $\tau\in (0,T)$, and let $h\in L^1_{loc}((0,T))$ be nonnegative and such that
  \bas
	\int_t^{t+\tau} h(s) ds \le b
	\qquad \mbox{for all } t\in (0,T-\tau)
  \eas
  with some $b>0$. Then
  \bas
	\int_0^t e^{-\lam(t-s)} h(s) ds \le \frac{b\tau}{1-e^{-\lam\tau}}
	\qquad \mbox{for all $t\in (0,T)$ and any } \lam>0.
  \eas
\end{lem}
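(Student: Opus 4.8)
The plan is to reduce the weighted running integral to a geometric series by splitting $(0,t)$ into backward windows of length $\tau$ and invoking the hypothesis on each of them. Fix $t\in(0,T)$ and $\lam>0$, let $N\ge 0$ be the smallest integer with $t-(N+1)\tau\le 0$, and write $(0,t)$, up to a null set, as the disjoint union of the intervals $I_k:=\big(\max\{t-(k+1)\tau,0\},\,t-k\tau\big)$, $k\in\{0,\dots,N\}$. On $I_k$ one has $t-s>k\tau$ and hence $e^{-\lam(t-s)}<e^{-\lam k\tau}$, so I would estimate
\[
  \int_0^t e^{-\lam(t-s)}h(s)\,ds
  = \sum_{k=0}^{N} \int_{I_k} e^{-\lam(t-s)}h(s)\,ds
  \le \sum_{k=0}^{N} e^{-\lam k\tau}\int_{I_k} h(s)\,ds.
\]
For $0\le k\le N-1$ the interval $I_k$ has the form $(t_k,t_k+\tau)$ with $t_k=t-(k+1)\tau\in(0,T-\tau)$, so the assumption gives $\int_{I_k}h\le b$ at once; for $k=N$ the minimality of $N$ forces $I_N\subset(0,\tau)$, where $\int_0^\tau h\le b$ follows by letting $t_0\downarrow 0$ in $\int_{t_0}^{\tau}h\le\int_{t_0}^{t_0+\tau}h\le b$ (monotone convergence then also shows $\int_0^\tau h\le b<\infty$, so all integrals appearing are finite even though $h$ is only assumed locally integrable). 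Summing the geometric series, I obtain
\[
  \int_0^t e^{-\lam(t-s)}h(s)\,ds \le b\sum_{k=0}^{\infty} e^{-\lam k\tau} = \frac{b}{1-e^{-\lam\tau}},
\]
and since $t\in(0,T)$ and $\lam>0$ were arbitrary, this is the desired estimate.

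An equivalent route that sidesteps the bookkeeping of the index $N$ is to set $\Phi(t):=\int_0^t e^{-\lam(t-s)}h(s)\,ds$ and observe that for $t>\tau$ one has $\Phi(t)=e^{-\lam\tau}\Phi(t-\tau)+\int_{t-\tau}^{t}e^{-\lam(t-s)}h(s)\,ds\le e^{-\lam\tau}\Phi(t-\tau)+b$, while $\Phi(t)\le\int_0^\tau h\le b$ for $t\le\tau$; iterating this recursion down to the interval $(0,\tau]$ again produces $\Phi(t)\le b\sum_{k\ge 0}e^{-\lam k\tau}$.

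I do not expect any genuine obstacle here: the statement is the elementary ``bounded mass in sliding windows implies bounded weighted running integral'' principle. The only mild subtlety is that the hypothesis, being posed for $t\in(0,T-\tau)$, does not literally cover the truncated leftmost window $I_N$; this is dispatched by the continuity/limiting argument above (or simply by enlarging $I_N$ to $(0,\tau)$ before applying the assumption). This is precisely the observation recorded in \cite[Lemma~3.4]{win_JFA}.
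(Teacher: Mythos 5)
Your window decomposition, the geometric-series summation, and the limiting argument for the truncated leftmost interval are all correct, and since the paper itself gives no proof of this lemma (it only points to \cite[Lemma~3.4]{win_JFA}), the self-contained argument you supply is exactly the standard one that the citation stands for; the recursion $\Phi(t)\le e^{-\lam\tau}\Phi(t-\tau)+b$ you sketch as an alternative is an equivalent repackaging of the same idea.

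There is, however, a mismatch you should not gloss over: what you actually prove is
\[
	\int_0^t e^{-\lam(t-s)}h(s)\,ds \;\le\; \frac{b}{1-e^{-\lam\tau}},
\]
whereas the statement asserts the bound $\frac{b\tau}{1-e^{-\lam\tau}}$, with an extra factor $\tau$ in the numerator. Your closing claim that your bound ``is the desired estimate'' is therefore only accurate when $\tau\ge 1$, since only then does $\frac{b}{1-e^{-\lam\tau}}\le\frac{b\tau}{1-e^{-\lam\tau}}$. For $\tau<1$ the two constants genuinely differ, and in fact the printed bound cannot hold in general: taking $h\equiv c$ on $(0,\infty)$ with $b=c\tau$, the left-hand side tends to $c/\lam$ as $t\to\infty$, while the claimed right-hand side equals $c\tau^2/(1-e^{-\lam\tau})$, and the required inequality $1-e^{-\lam\tau}\le\lam\tau^2$ fails once $\lam\tau$ is small and $\tau<1$. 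So the factor $\tau$ in the statement is evidently a misprint for the constant you derived, and your proof establishes the correct version of the lemma; but a careful write-up should record this discrepancy explicitly (it also propagates into the later applications, where $\frac{c\te}{1-e^{-\te}}$ should read $\frac{c}{1-e^{-\te}}$, which is uniformly controlled only once $\te$ is bounded away from $0$).
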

Whereas the previous estimates for $\ueps$ were concerned with temporally integrated quantities, the following lemma provides a temporally uniform bound.
\begin{lem}\label{lem8}
  Let $p\in (1,3)$ and $\delta\in (0,R)$. Then there exists $C(p,\delta)>0$ such that
  \be{8.1}
	\int_\delta^R \ueps^p(r,t) dr
	\le C(p,\delta)
	\qquad \mbox{for all $t\in (0,\tme)$ and } \eps\in [0,1).
  \ee
\end{lem}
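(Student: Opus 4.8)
The plan is to test the first equation in \eqref{0eps} against $\zeta^2\ueps^{p-1}$ for a cutoff $\zeta$ vanishing near the origin and equal to $1$ on $B_\delta(0)^c$, exactly as in the proof of Lemma \ref{lem6}, but now keeping the full time derivative rather than integrating it away. This produces
\begin{align*}
  \frac1p\frac{d}{dt}\io \zeta^2\ueps^p + \frac{1-p}{2}\io \zeta^2\ueps^{p-2}|\na\ueps|^2
  \le (2-p)\io\zeta^2\ueps^p|\na\veps|^2 + \frac{5-p}{1-p}\io|\na\zeta|^2\ueps^p,
\end{align*}
valid now for $p\in(1,3)$ as long as the sign of $1-p$ is tracked carefully (for $p>1$ the diffusion term has the \emph{wrong} sign when moved, so in fact one should rewrite it so that the genuinely dissipative contribution is $-(p-1)\io\zeta^2\ueps^{p-2}|\na\ueps|^2$ on the left and estimate the cross term $-(p-1)\io\zeta^2\ueps^{p-1}\Feps'(\ueps)\na\ueps\cd\na\veps$ by Young against it). The outcome is an ODI of the form $y'(t)\le -a\,(\text{something nonnegative}) + g(t)$ with $y(t):=\io\zeta^2\ueps^p(\cd,t)$, where $g$ collects the right-hand side terms.

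Next I would bound $g$. By Hölder with exponents $\frac1p$ and $\frac{1}{1-p}$ — here one needs $p<1$... so instead, since $p\in(1,3)$, I would split differently: write $\io\zeta^2\ueps^p|\na\veps|^2$ and control it via Lemma \ref{lem5} applied with a large exponent $q$ for $\na\veps$ on $(\tfrac\delta2,R)$ together with a bound on $\io\zeta^{2}\ueps^{pq'}$ on the same annulus, where $q'$ is the conjugate exponent; choosing $q$ close to $1$ makes $pq'$ large, so this does \emph{not} close on its own. The honest route is to absorb: interpolate $\io\zeta^2\ueps^p|\na\veps|^2 \le \varepsilon_0\io\zeta^2\ueps^{p+2}$-type... which again is not yet controlled. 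Thus the real mechanism must be that $g$ is \emph{time-integrable on unit intervals}: from Lemma \ref{lem6} we know $\int_t^{t+\te}\int_\delta^R\ueps^{p'+2}\le C$ for every $p'\in(0,1)$, hence in particular $\int_t^{t+\te}\int_\delta^R\ueps^{q_0}\le C$ for every $q_0<3$, and using $|\na\veps|$ bounds from Lemma \ref{lem5} in every $L^q$, Hölder gives $\int_t^{t+\te} g(s)\,ds\le C(p,\delta)$ precisely when $p<3$. So $g$ satisfies the hypothesis of Lemma \ref{lem77}.

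Therefore the final step is to convert the differential inequality $y'\le -\lam y + \kappa g(t)$ — obtained by throwing away the dissipation and instead invoking a lower bound $\io\zeta^2\ueps^{p-2}|\na\ueps|^2\gtrsim (\text{const})\cdot\io\zeta^2\ueps^p - (\text{const})$ via a Poincaré/Gagliardo–Nirenberg estimate on the annulus, so that the left side controls $y$ from below up to constants — into a uniform bound. Integrating, $y(t)\le y(0)e^{-\lam t}+\kappa\int_0^t e^{-\lam(t-s)}g(s)\,ds$, and Lemma \ref{lem77} bounds the last integral by $\kappa b\te/(1-e^{-\lam\te})$, uniformly in $t\in(0,\tme)$ and $\eps\in[0,1)$; since $\zeta\equiv1$ on $(\delta,R)$ this yields \eqref{8.1}.

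The main obstacle I expect is the sign management for $p>1$: the term $(1-p)\io\zeta^2\ueps^{p-2}|\na\ueps|^2$ is now negative, so one cannot simply drop it, and one must instead extract a \emph{genuinely dissipative} left-hand contribution. The clean way is to keep working with $w:=\ueps^{p/2}$, for which the natural dissipation is $\io\zeta^2|\na w|^2$ (coefficient $\frac{4(p-1)}{p^2}>0$ for $p>1$), absorb the taxis cross-term $\io\zeta^2 w\,\Feps'(\ueps)\na w\cd\na\veps$ into it via Young leaving a remainder $\io\zeta^2 w^2|\na\veps|^2 = \io\zeta^2\ueps^p|\na\veps|^2$, and then the restriction $p<3$ enters exactly to make $\io_\delta^R\ueps^{p}|\na\veps|^2$ unit-time-integrable using Lemma \ref{lem6} (valid only below the exponent $3$) and Lemma \ref{lem5}. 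Coupling this dissipation to $y$ through a one-dimensional Gagliardo–Nirenberg/Poincaré inequality on $(\tfrac\delta2,R)$, together with the mass bound \eqref{mass} pinning $\int_\delta^R\ueps$, closes the loop; the final invocation of Lemma \ref{lem77} delivers the temporally uniform estimate.
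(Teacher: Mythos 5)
Your proposal is correct and, in substance, follows the paper's own route: test with $\zeta^2\ueps^{p-1}$ on an annulus, use the genuinely dissipative term (good sign since $p>1$) to absorb the taxis cross term via Young, observe that the remaining terms $\io\zeta^2\ueps^p|\na\veps|^2$ and $\io|\na\zeta|^2\ueps^p$ are bounded on unit time intervals by combining Lemma \ref{lem5} (arbitrary $L^q$ bounds for $v_{\eps r}$ away from the origin) with Lemma \ref{lem6} (space-time bounds for $\ueps^{p_0}$, any $p_0<3$, applied with $\delta/2$ in place of $\delta$), and close with Lemma \ref{lem77}. The one place where you deviate is the production of the damping term in the ODI: you propose to recover $-\lambda y$ from the retained dissipation via a one-dimensional Gagliardo--Nirenberg/Poincar\'e inequality for $w=\ueps^{p/2}$ on the annulus together with the mass bound \eqref{mass}. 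This works (with the minor extra care that the cutoff produces a commutator term $\io|\na\zeta|^2\ueps^p$ inside the GN step, which is again handled by Young and Lemma \ref{lem6}), but the paper avoids it entirely by a cheaper trick: add $\io\zeta^2\ueps^p$ to both sides and estimate the extra copy on the right by Young against $\ueps^{p_0}$ with $p<p_0<3$, so that it simply joins the time-integrable majorant $\heps(t)$; the dissipation is then used only to absorb the cross term. Your version uses the dissipation more substantively at the price of an additional interpolation step; the paper's version is slightly shorter and needs no coupling between dissipation and $y$. Also note that your second paragraph's false start (H\"older with exponent $\tfrac{1}{1-p}$, only meaningful for $p<1$) is immaterial since you discard it and arrive at the correct mechanism, namely unit-interval time integrability of the forcing, which is exactly where the restriction $p<3$ enters in the paper as well.
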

\begin{proof}
  We again take a function $\zeta\in C^\infty(\bom)$ fulfilling $0\le\zeta\le 1$ and $\zeta|_{\ov{B}_\frac{\delta}{2}(0)}\equiv 0$
  as well as $\zeta|_{\bom\setminus B_\delta(0)}\equiv 1$, and once more rely on (\ref{0eps}) to see by means of Young's inequality
  and (\ref{F}) that
  \bea{8.2}
	\frac{d}{dt} \io \zeta^2 \ueps^p
	+ \io \zeta^2 \ueps^p
	&=& - p(p-1) \io \zeta^2 \ueps^{p-2} |\na\ueps|^2
	+ p(p-1) \io \zeta^2 \ueps^{p-1} \Feps'(\ueps) \na\ueps\cdot\na\veps \nn\\
	& & -2p \io \zeta \ueps^{p-2} \na\ueps\cdot\na\zeta
	+ 2p \io \zeta\ueps^p \Feps'(\ueps) \na\veps\cdot\na\zeta \nn\\
	& & + \io \zeta^2 \ueps^p \nn\\
	&\le& \frac{p(p+1)}{2} \io \zeta^2 \ueps^p |\na\veps|^2
	+ \frac{p(p+1)}{p-1} \io |\na\zeta|^2 \ueps^p \nn\\
	& & + \io \zeta^2 \ueps^p
	\qquad \mbox{for all $t\in (0,\tme)$ and } \eps\in [0,1).
  \eea
  Here, taking any $p_0=p_0(p)>p$ such that $p_0<3$, we may again draw on Young's inequality to estimate
  \bas
	\io \zeta^2 \ueps^p |\na\veps|^2
	\le \int_{\Omega\setminus B_\frac{\delta}{2}(0)} \ueps^{p_0}
	+ \int_{\Omega\setminus B_\frac{\delta}{2}(0)} |\na\veps|^\frac{2p_0}{p_0-p}
	\qquad \mbox{for all $t\in (0,\tme)$ and } \eps\in [0,1)
  \eas
  and
  \bas
	\io |\na\zeta|^2 \ueps^p
	\le \int_{\Omega\setminus B_\frac{\delta}{2}(0)} \ueps^{p_0}
	+ \io |\na\zeta|^\frac{2p_0}{p_0-p}
	\qquad \mbox{for all $t\in (0,\tme)$ and } \eps\in [0,1)
  \eas
  as well as 
  \bas
	\io \zeta^2 \ueps^p
	\le \int_{\Omega\setminus B_\frac{\delta}{2}(0)} \ueps^{p_0}
	+ |\Omega|
	\qquad \mbox{for all $t\in (0,\tme)$ and } \eps\in [0,1),
  \eas
  so that invoking Lemma \ref{lem5} we find $c_1=c_1(p,\delta)>0$ fulfilling
  \bas
	& & \hs{-30mm}
	\frac{p(p+1)}{2} \io \zeta^2 \ueps^p |\na\veps|^2
	+ \frac{p(p+1)}{p-1} \io |\na\zeta|^2 \ueps^p
	+ \io \zeta^2 \ueps^p \\
	&\le& \heps(t):=c_1\int_{\Omega\setminus B_\frac{\delta}{2}(0)} \ueps^{p_0}
	+ c_1
	\qquad \mbox{for all $t\in (0,\tme)$ and } \eps\in [0,1).
  \eas
  From (\ref{8.2}) we therefore obtain that
  \bas
	\frac{d}{dt} \io \zeta^2 \ueps^p + \io \zeta^2 \ueps^p
	\le \heps(t)
	\qquad \mbox{for all $t\in (0,\tme)$ and } \eps\in [0,1),
  \eas
  so that since $c_2=c_2(p,\delta):=\sup_{\eps\in [0,1)} \sup_{t\in (0,\tme-\te)} \int_t^{t+\te} \heps(s) ds$ with $\te=\min\{1,\frac{1}{2}\tme\}$ is finite according to Lemma \ref{lem6} and the fact that $p_0<3$, by using an ODE comparison argument along with  Lemma~\ref{lem77}
  we infer that
  \bas
	\io \zeta^2 \ueps^p
	&\le& e^{-t} \cdot \io \zeta^2 (u^{(0)})^p
	+ \int_0^t e^{-(t-s)} \heps(s) ds \\
	&\le& \io (u^{(0)})^p
	+ \frac{c_2\te}{1-e^{-\te}}
	\qquad \mbox{for all $t\in (0,\tme)$ and } \eps\in [0,1),
  \eas
  and hence conclude as intended, because $\frac{\tau}{1-e^{-\tau}} \le \frac{1}{1-e^{-1}}$ for all $\tau\in (0,1]$,  
  and because $\zeta\equiv 1$ in $\Om\setminus B_\delta(0)$.
\end{proof}
With these bounds at hand, we can even estimate the derivative of the second component uniformly, again
outside a neighbourhood of the origin.
\begin{lem}\label{lem9}
  For each $\delta\in (0,R)$ there exists $C(\delta)>0$ satisfying
  \be{9.1}
	|v_{\eps r}(r,t)| \le C(\delta)
	\qquad \mbox{for all $r\in [\delta,R]$, $t\in (0,\tme)$ and } \eps\in [0,1).
  \ee
\end{lem}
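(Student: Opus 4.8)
The plan is to upgrade the spatially integrated bound of Lemma \ref{lem8} to a pointwise gradient bound for $v_\eps$ away from the origin via a Duhamel representation for the localized quantity $\chi\cdot(\veps-\vst)$ already introduced in (\ref{chiv}). Concretely, fix $\delta\in(0,R)$ and pick a cutoff $\chi\in C^\infty([0,R])$ with $0\le\chi\le 1$, $\chi\equiv 0$ on $[0,\frac{\delta}{2}]$, $\chi\equiv 1$ on $[\delta,R]$, and again work with the one-dimensional Dirichlet heat semigroup $(e^{-tA})_{t\ge 0}$ on $(\frac{\delta}{2},R)$, $A=-(\cdot)_{rr}$. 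The key point is that Lemma \ref{lem8} (applied with some $p\in(1,3)$, say $p=2$) and Lemma \ref{lem5} now give a bound in $L^1((0,\tme);L^p)$-in-space sense better than before: the inhomogeneity $\beps^{(\chi)}$ from (\ref{b}) is controlled in $L^q((\frac{\delta}{2},R))$, uniformly in $t$ and $\eps$, for some $q>1$, because its three constituent terms involve $v_{\eps r}$ (bounded in every $L^q$ by Lemma \ref{lem5}), $\veps-\vst$ (bounded in $L^\infty$ by Lemma \ref{lem2}), and $\Feps(\ueps)\veps\le\ueps\veps$, whose spatial $L^p$-norm on $(\frac{\delta}{2},R)$ is bounded by Lemma \ref{lem8} together with (\ref{vinfty}).

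The next step is to invoke the smoothing estimate for $\partial_r e^{-tA}$ acting on $L^q$ rather than on $L^1$: there exist $\lam,c_1,c_2>0$ (depending on $q,\delta$) with $\|\partial_r e^{-tA}\varphi\|_{L^\infty((\frac{\delta}{2},R))}\le c_1\|\varphi\|_{W^{1,\infty}}$ for admissible $\varphi$ vanishing at the endpoints, and $\|\partial_r e^{-tA}\varphi\|_{L^\infty((\frac{\delta}{2},R))}\le c_2(1+t^{-\frac{1}{2}-\frac{1}{2q}})e^{-\lam t}\|\varphi\|_{L^q((\frac{\delta}{2},R))}$; the temporal singularity exponent $\frac12+\frac1{2q}$ is strictly below $1$ precisely when $q>1$, which is exactly what Lemma \ref{lem5} and Lemma \ref{lem8} deliver. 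Then, since $\chi\cdot(\veps-\vst)=0$ on $\{\frac{\delta}{2},R\}\times(0,\tme)$, the variation-of-constants formula for (\ref{chiv}) reads
\[
	\partial_r\big\{\chi\cdot(\veps(\cdot,t)-\vst)\big\}
	= \partial_r e^{-tA}\big\{\chi\cdot(v^{(0)}-\vst)\big\}
	+ \int_0^t \partial_r e^{-(t-s)A}\beps^{(\chi)}(\cdot,s)\,ds,
\]
and taking $L^\infty((\frac{\delta}{2},R))$ norms, the first term is bounded by $c_1\|\chi\cdot(v^{(0)}-\vst)\|_{W^{1,\infty}}$ while the integral is bounded by $c_2\sup_{s<\tme}\|\beps^{(\chi)}(\cdot,s)\|_{L^q}\cdot\int_0^\infty(1+\sigma^{-\frac12-\frac1{2q}})e^{-\lam\sigma}\,d\sigma<\infty$. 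Since $\chi\equiv 1$ on $[\delta,R]$, this yields (\ref{9.1}) with a constant $C(\delta)$ independent of $t\in(0,\tme)$ and $\eps\in[0,1)$.

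The main obstacle — or rather, the point needing care — is the interplay of exponents: one must check that the choice of $p\in(1,3)$ in Lemma \ref{lem8} and the resulting spatial integrability $q$ of the worst term $\ueps\veps$ in $\beps^{(\chi)}$ are compatible with a time-singularity exponent strictly less than $1$ in the $L^q\to\partial_r$-in-$L^\infty$ smoothing estimate, so that the Duhamel integral converges. Since any $q>1$ suffices and $\ueps\in L^\infty((0,\tme);L^p((\delta,R)))$ for a range of $p>1$ from Lemma \ref{lem8}, while $v_{\eps r}$ lies in all $L^q$ from Lemma \ref{lem5}, there is ample room; one simply fixes, e.g., $q\in(1,2)$ and uses $p=q$ in Lemma \ref{lem8}. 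The remaining ingredients — that $\beps^{(\chi)}$ is supported in $[\frac{\delta}{2},R]$ and that its coefficients $\frac{n-1}{r}\chi-2\chi_r$ and $\chi_{rr}$ are bounded there — are immediate from the construction of $\chi$, so the argument is a direct analogue of the proof of Lemma \ref{lem5} with $L^1$ replaced by $L^q$ and $L^q$-in-space replaced by $L^\infty$-in-space on the target side.
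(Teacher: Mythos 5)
Your argument is correct and coincides with the paper's proof: both bound $\beps^{(\chi)}$ uniformly in time in a spatial $L^q$-norm (the paper fixes $q=2$, which Lemma \ref{lem8} permits since $2\in(1,3)$, giving the singularity $t^{-3/4}$ consistent with your exponent $\frac12+\frac1{2q}$) and then apply the $L^q\to\partial_r$-in-$L^\infty$ Dirichlet heat semigroup smoothing in the Duhamel formula for \eqref{chiv}. The only cosmetic difference is that you leave $q>1$ general rather than fixing $q=2$.
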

\begin{proof}
  We once more take $\chi\in C^\infty([0,R])$ such that $0\le\chi\le 1$ and $\chi|_{[0,\frac{\delta}{2}]}\equiv 0$
  as well as $\chi|_{[\delta,R]} \equiv 1$, and then infer from Lemma \ref{lem8} in conjunction with (\ref{F}), (\ref{vinfty}) and
  Lemma \ref{lem5} that there exists $c_1=c_1(\delta)>0$ such that with 
  $(\beps)_{\eps\in [0,1)} \equiv (\beps^{(\chi)})_{\eps \in [0,1)}$
  as defined in (\ref{b}) we have
  \bas
	\|\beps(\cdot,t)\|_{L^2((\frac{\delta}{2},R))} \le c_1
	\qquad \mbox{for all $t\in (0,\tme)$ and } \eps\in [0,1).
  \eas
  As for the Dirichlet heat semigroup $(e^{-tA})_{t\ge 0}$ on $(\frac{\delta}{2},R))$ it is 
  known (\cite{eidelman_ivasishen}, \cite{quittner_souplet}) that there exist $\lam=\lam(\delta)>0$,
  $c_2=c_2(\delta)>0$ and $c_3=c_3(\delta)>0$ with the property that for all $t>0$,
  \bas
	\|\partial_r e^{-tA} \varphi\|_{L^\infty((\frac{\delta}{2},R))}
	\le c_2\|\varphi\|_{W^{1,\infty}((\frac{\delta}{2},R))}
	\qquad \mbox{for all $\varphi\in W^{1,\infty}((\frac{\delta}{2},R))$ such that 
		$\varphi(\frac{\delta}{2})=\varphi(R)=0$}
  \eas
  as well as
  \bas
	\|\partial_r e^{-tA} \varphi\|_{L^\infty((\frac{\delta}{2},R))}
	\le c_3\cdot (1+t^{-\frac{3}{4}}) e^{-\lam t} \|\varphi\|_{L^2((\frac{\delta}{2},R))}
	\quad \mbox{for all $\varphi\in C^0([\frac{\delta}{2},R])$ with
		$\varphi(\frac{\delta}{2})=\varphi(R)=0$,}
  \eas
  from (\ref{chiv}) we obtain that
  \bas
	\|v_{\eps r}(\cdot,t)\|_{L^\infty((\delta,R))}
	&\le& \Big\|\partial_r \Big\{ \chi \cdot \big(\veps(\cdot,t)-\vst\big) \Big\} \Big\|_{L^\infty((\frac{\delta}{2},R))} \\
	&\le& c_2 \big\| \chi\cdot (v^{(0)}-\vst)\big\|_{W^{1,\infty}((\frac{\delta}{2},R))}
	+ c_1 c_3 \int_0^\infty (1+\sigma^{-\frac{3}{4}}) e^{-\lam\sigma} d\sigma
  \eas
  for all $t\in (0,\tme)$ and any $\eps\in [0,1)$.
\end{proof}
In conclusion, this allows for appropriately controlling all the boundary integrals that will turn out to appear
in the course of our subsequent energy analysis:
\begin{cor}\label{cor10}
  There exists $C>0$ such that
  \be{10.1}
	\int_t^{t+\te} \int_{\pO} \frac{1}{\veps} \frac{\partial |\na\veps|^2}{\partial\nu} \le C
	\qquad \mbox{for all $t\in [0,\tme-\te)$}
  \ee
  and 
  \be{10.2}
	\int_t^{t+\te} \int_{\pO} |\na\veps|^2 \frac{\partial |\na\veps|^2}{\partial\nu} \le C
	\qquad \mbox{for all $t\in [0,\tme-\te)$}
  \ee
  as well as
  \be{10.3}
	\bigg| \int_{\pO} \frac{|\na\veps|^2}{\veps^2} \frac{\partial\veps}{\partial\nu} \bigg| \le C
	\qquad \mbox{for all $t\in (0,\tme)$ and } \eps\in [0,1),
  \ee
  where, as before, $\te=\min\{1,\frac{1}{2}\tme\}$ for $\eps\in [0,1)$.
\end{cor}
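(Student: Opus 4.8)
The plan is to reduce each of the three boundary integrals to quantities controlled by the radial one-dimensional estimates already available near $r=R$, in particular Lemma~\ref{lem9} (uniform bound for $v_{\eps r}$ on $[\delta,R]$) and Corollary~\ref{cor7} (space-time bound for $\ueps(R,\cdot)$). First I would rewrite everything in terms of the radial variable. Since $\veps$ is radial and $\veps=\vst$ on $\pO$, on $\pO$ we have $\na\veps = v_{\eps r}(R,t)\,\nu$, hence $|\na\veps|^2 = v_{\eps r}^2(R,t)$ and $\partial_\nu\veps = v_{\eps r}(R,t)$, so each surface integral collapses to $|\pO|$ times an explicit expression in $v_{\eps r}(R,t)$, $v_{\eps rr}(R,t)$, and $\veps(R,t)=\vst$. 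The only genuinely new pointwise quantity appearing is $v_{\eps rr}(R,t)$, which enters through $\frac{\partial|\na\veps|^2}{\partial\nu}$; the plan is to eliminate it by using the second PDE in (\ref{0eps}) evaluated on the boundary.

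Concretely, for a radial function the equation $v_{\eps t}=\Delta\veps-\Feps(\ueps)\veps$ reads $v_{\eps t}=v_{\eps rr}+\frac{n-1}{r}v_{\eps r}-\Feps(\ueps)\veps$; on $\pO$ we have $v_{\eps t}(R,t)=0$ (since $\veps\equiv\vst$ there for all $t$), so
\[
	v_{\eps rr}(R,t) = \Feps\big(\ueps(R,t)\big)\,\vst - \frac{n-1}{R}\,v_{\eps r}(R,t)
	\qquad\text{for all }t\in(0,\tme),\ \eps\in[0,1).
\]
Using $0\le\Feps(\ueps)\le\ueps$ and Lemma~\ref{lem9} with, say, $\delta=\frac{R}{2}$, this yields $|v_{\eps rr}(R,t)|\le c_1\big(1+\ueps(R,t)\big)$ with $c_1$ independent of $\eps$ and $t$. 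Now $\frac{\partial|\na\veps|^2}{\partial\nu}\big|_{\pO} = \partial_r\big(v_{\eps r}^2\big)(R,t) = 2v_{\eps r}(R,t)v_{\eps rr}(R,t)$, so again by Lemma~\ref{lem9},
\[
	\Big| \frac{\partial|\na\veps|^2}{\partial\nu}\Big|_{\pO}\Big| \le 2|v_{\eps r}(R,t)|\cdot|v_{\eps rr}(R,t)| \le c_2\big(1+\ueps(R,t)\big).
\]
With this in hand (\ref{10.3}) is immediate, since its integrand equals $|\pO|\cdot\frac{v_{\eps r}^2(R,t)}{\vst^2}\cdot v_{\eps r}(R,t)$, bounded uniformly by Lemma~\ref{lem9}. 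For (\ref{10.1}) the integrand is $|\pO|\cdot\frac{1}{\vst}\cdot 2v_{\eps r}(R,t)v_{\eps rr}(R,t)$, bounded by $c_3(1+\ueps(R,t))$, and integrating over $(t,t+\te)$ and invoking Corollary~\ref{cor7} gives (\ref{10.1}); similarly (\ref{10.2}) has integrand bounded by $c_4(1+\ueps(R,t))$ after one more application of Lemma~\ref{lem9} to the extra factor $|\na\veps|^2=v_{\eps r}^2(R,t)$, and Corollary~\ref{cor7} again finishes the job. (One must be slightly careful if $\vst=0$: in that case the hypothesis (\ref{init}) forces $v^{(0)}=0$ on $\pO$ but $v^{(0)}>0$ in $\bom$; however (\ref{10.3}) then involves division by $\veps$ on $\pO$, so presumably the statement is to be read under $\vst>0$, or $v_{\eps r}(R,t)$ vanishes fast enough — I would check the precise convention used elsewhere in the paper, but the estimate otherwise goes through verbatim with $\vst$ replaced by a lower bound for $\veps$ on $\pO$.)

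The main obstacle is controlling $v_{\eps rr}$ on the boundary, and the key realization that makes it routine is that the Dirichlet condition $\veps\equiv\vst$ on $\pO$ \emph{for all} $t$ forces $v_{\eps t}=0$ there, which lets the second-order radial term be traded for the zeroth-order consumption term plus the first-order term already controlled by Lemma~\ref{lem9}. After that, everything reduces to the single scalar trace quantity $\ueps(R,\cdot)$, whose time-integrated bound is exactly Corollary~\ref{cor7}; the uniform-in-time estimates (\ref{10.3}) need only Lemma~\ref{lem9}. No further regularity theory is required.
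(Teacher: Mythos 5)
Your proposal is correct and follows essentially the same route as the paper: both exploit that the second equation of (\ref{0eps}) holds up to $\pO$ where $v_{\eps t}=0$, so that $v_{\eps rr}(R,t)$ can be traded for $\Feps(\ueps(R,t))\vst-\frac{n-1}{R}v_{\eps r}(R,t)$, after which Lemma~\ref{lem9} and Corollary~\ref{cor7} finish the argument (the paper merely keeps a one-sided inequality by dropping the nonpositive term $-\frac{2(n-1)}{R\veps}v_{\eps r}^2$ instead of bounding $|v_{\eps rr}|$ two-sidedly). Your caveat about $\vst=0$ is resolved by noting that (\ref{init}) requires $v^{(0)}>0$ on all of $\bom$ and $v^{(0)}=\vst$ on $\pO$, which forces $\vst>0$.
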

\begin{proof}
  Once more explicitly relying on radial symmetry, we may use that according to Lemma \ref{lem1} the second equation in (\ref{0eps})
  holds up to $\pO$ throughout $(0,\tme)$, which namely ensures that on $\pO$ we have the one-sided inequality 
  \bas
	\frac{1}{\veps} \cdot \frac{\partial |\na\veps|^2}{\partial\nu}
	&=& \frac{2}{\veps} v_{\eps r} v_{\eps rr} \\
	&=& \frac{2}{\veps} v_{\eps r} \cdot \Big\{ v_{\eps rr} + \frac{n-1}{R} v_{\eps r} \Big\} 
	- \frac{2(n-1)}{R\veps} v_{\eps r}^2 \\
	&=& \frac{2}{\veps} \Feps(\ueps) \veps v_{\eps r}
	- \frac{2(n-1)}{R\veps} v_{\eps r}^2 \\
	&\le& 2\Feps(\ueps) v_{\eps r}
	\qquad \mbox{for all $t\in (0,\tme)$ and } \eps\in [0,1).
  \eas
  Again thanks to (\ref{F}), this implies that
  \bas
	\frac{1}{2} \int_t^{t+\te} \int_{\pO} \frac{1}{\veps} \frac{\partial |\na\veps|^2}{\partial\nu}
	&\le& |\pO| \cdot \int_t^{t+\te} \ueps(R,s) \cdot |v_{\eps r}(R,s)| ds \\
	&\le& |\pO| \cdot \|v_{\eps r}\|_{L^\infty((\frac{R}{2},R)) \times (0,\tme))} 
	\cdot \int_t^{t+\te} \ueps(R,s) ds \\[2mm]
	& & \hs{20mm}
	\qquad \mbox{for all $t\in [0,\tme-\te)$ and } \eps\in [0,1),
  \eas
  and that, similarly,
  \bas
	\int_t^{t+\te} \int_{\pO} |\na\veps|^2 \frac{\partial |\na\veps|^2}{\partial\nu} 
	&\le& 2 |\pO| \vst \cdot \|v_{\eps r}\|_{L^\infty((\frac{R}{2},R)) \times (0,\tme))}^3 
	\cdot \int_t^{t+\te} \ueps(R,s) ds \\[2mm]
	& & \hs{20mm}
	\qquad \mbox{for all $t\in [0,\tme-\te)$ and } \eps\in [0,1),
  \eas
  so that since furthermore
  \bas
	\bigg| \int_{\pO} \frac{|\na\veps|^2}{\veps^2} \frac{\partial\veps}{\partial\nu} \bigg| 
	\le \frac{|\pO|}{2\vst^2} \|v_{\eps r}\|_{L^\infty((\frac{R}{2},R)) \times (0,\tme))}^3 
	\qquad \mbox{for all $t\in (0,\tme)$ and } \eps\in [0,1),
  \eas
  the claim results from Lemma \ref{lem9} when combined with Corollary \ref{cor7}.
\end{proof}
\mysection{Energy analysis}\label{sect4}
Our approach toward deriving a suitable relative of (\ref{energy}) is now launched by the following observation.
\begin{lem}\label{lem3}
  Let $\eps\in [0,1)$. Then
  \bea{3.1}
	& & \hs{-16mm}
	\frac{d}{dt} \bigg\{ \io \ueps\ln\ueps + \frac{1}{2} \io \frac{|\na\veps|^2}{\veps} \bigg\}
	+ \io \frac{|\na\ueps|^2}{\ueps}
	+ \io \veps |D^2\ln\veps|^2 \nn\\
	&=&
	-\frac{1}{2} \io \frac{\Feps(\ueps)}{\veps} |\na\veps|^2
	+ \frac{1}{2} \int_{\pO} \frac{1}{\veps} \frac{\partial |\na\veps|^2}{\partial\nu}
	- \frac{1}{2} \int_{\pO} \frac{|\na\veps|^2}{\veps^2} \frac{\partial\veps}{\partial\nu}
	\qquad \mbox{for all $t\in (0,\tme)$.}
  \eea
\end{lem}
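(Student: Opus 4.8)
The plan is to test the two PDEs in (\ref{0eps}) against the natural multipliers used in the Neumann analogue, namely $\ln\ueps+1$ for the $\ueps$-equation and a suitable combination of $\veps$ and $|\na\veps|^2/\veps$ for the $\veps$-equation, and to track carefully every boundary term produced by integration by parts, since these no longer vanish. Concretely, I would first compute $\frac{d}{dt}\io\ueps\ln\ueps$: multiplying $u_{\eps t}=\Del\ueps-\na\cdot(\ueps\Feps'(\ueps)\na\veps)$ by $\ln\ueps+1$ and integrating over $\Om$ gives, after one integration by parts, $-\io\frac{|\na\ueps|^2}{\ueps}+\io\Feps'(\ueps)\na\ueps\cdot\na\veps+\text{(boundary)}$. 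The boundary term here is $\int_{\pO}(\ln\ueps+1)\big(\dN\ueps-\ueps\Feps'(\ueps)\dN\veps\big)=0$ by the no-flux condition in (\ref{0eps}), so this piece is clean. The term $\io\Feps'(\ueps)\na\ueps\cdot\na\veps=\io\na\big(\wh{F}_\eps(\ueps)\big)\cdot\na\veps$, where $\wh F_\eps'=\Feps'$, integrates by parts again to $-\io\wh F_\eps(\ueps)\Del\veps+\int_{\pO}\wh F_\eps(\ueps)\dN\veps$; using $\Del\veps=v_{\eps t}+\Feps(\ueps)\veps$ this links up with the $\veps$-dynamics. (One must check that the boundary term $\int_{\pO}\wh F_\eps(\ueps)\dN\veps$ either cancels against something or is among those handled by Corollary~\ref{cor10}; in fact the standard computation is arranged so that the surviving boundary contributions are exactly the three displayed on the right of (\ref{3.1}).)

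Next I would handle the functional $\io\frac{|\na\veps|^2}{\veps}$. The key identity, well known from \cite{win_arma,taowin_evsmooth_stabil_consumption}, is that for a positive solution of $v_{\eps t}=\Del\veps-\Feps(\ueps)\veps$ one has, pointwise,
\begin{align*}
\frac{d}{dt}\,\frac{|\na\veps|^2}{\veps}
&= \frac{2\na\veps\cdot\na v_{\eps t}}{\veps}-\frac{|\na\veps|^2 v_{\eps t}}{\veps^2},
\end{align*}
and after inserting $v_{\eps t}=\Del\veps-\Feps(\ueps)\veps$, integrating over $\Om$, and using the pointwise Bochner-type identity $2\na\veps\cdot\na\Del\veps=\Del|\na\veps|^2-2|D^2\veps|^2$ together with the algebraic rearrangement that produces the $\io\veps|D^2\ln\veps|^2$ term (this is where $|D^2\ln\veps|^2=|D^2\veps|^2/\veps^2-2\na\veps\cdot\na|\na\veps|^2/\big(2\veps^3\big)+\dots$ is exploited), one arrives at $\frac{d}{dt}\io\frac{|\na\veps|^2}{\veps}=-2\io\veps|D^2\ln\veps|^2-\io\frac{\Feps(\ueps)}{\veps}|\na\veps|^2+\io\frac{\Del|\na\veps|^2}{\veps}-\io\frac{|\na\veps|^2\Del\veps}{\veps^2}+\text{(lower order consumption terms)}$. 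Integrating the two Laplacian terms by parts yields precisely the boundary integrals $\int_{\pO}\frac1\veps\dN|\na\veps|^2$ and $\int_{\pO}\frac{|\na\veps|^2}{\veps^2}\dN\veps$ appearing in (\ref{3.1}). Adding half of this to the $\ueps\ln\ueps$ identity, and checking that the interior cross-terms involving $\Feps'(\ueps)\na\ueps\cdot\na\veps$ cancel against the $-\io\wh F_\eps(\ueps)v_{\eps t}$ contribution (this is the classical cancellation underlying (\ref{energy}), which works because $\xi\Feps'(\xi)$ and $\Feps(\xi)$ are comparable — here $\wh F_\eps$ must be chosen so that the telescoping is exact), produces (\ref{3.1}).

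The main obstacle I anticipate is \emph{not} the boundary terms (those are exactly what Section~\ref{sect3} was built to control and they are simply carried along here as the right-hand side of (\ref{3.1})), but rather the rigorous justification of the pointwise manipulations: the identity for $\frac{d}{dt}\io\frac{|\na\veps|^2}{\veps}$ involves third derivatives of $\veps$ and division by $\veps$, so one needs $\veps\in C^{2,1}$ up to the boundary — which Lemma~\ref{lem1} supplies — plus strict positivity of $\veps$ on $\bom$, also from Lemma~\ref{lem1}, and one should perform the computation on a slightly interior ball $B_{R-\eta}(0)$ and let $\eta\to0$, or argue that all terms are integrable up to $\pO$ by the boundary regularity already in hand, so that the integrations by parts over $\Om$ are legitimate and the boundary integrals over $\pO$ are finite. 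A secondary point of care is the exact bookkeeping of the several consumption-induced terms of the form $\io\Feps(\ueps)|\na\veps|^2/\veps$ and $\io\na\Feps(\ueps)\cdot\na\veps$: one must verify that after collecting everything only the single term $-\frac12\io\frac{\Feps(\ueps)}{\veps}|\na\veps|^2$ survives on the dissipative side, which is a direct but slightly lengthy algebraic check that I would carry out by writing the $\veps$-equation as $\Del\veps=v_{\eps t}+\Feps(\ueps)\veps$ and substituting consistently.
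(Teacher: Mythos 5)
Your overall architecture --- differentiate $\io\ueps\ln\ueps$ and $\frac12\io\frac{|\na\veps|^2}{\veps}$, insert the equations, use $2\na\veps\cdot\na\Del\veps=\Del|\na\veps|^2-2|D^2\veps|^2$, integrate by parts while retaining the boundary terms, and assemble $\io\veps|D^2\ln\veps|^2$ --- is the same as the paper's, and your remarks on why the pointwise manipulations are legitimate (classical regularity and positivity up to the boundary from Lemma~\ref{lem1}) are fine. However, the two places where (\ref{3.1}) is actually an \emph{exact identity} are treated incorrectly or deferred. First, the cross term: in the paper, $+\io\Feps'(\ueps)\na\ueps\cdot\na\veps$ from the $\ueps$-part cancels \emph{directly} against the term $-\io\Feps'(\ueps)\na\ueps\cdot\na\veps$ which the product rule produces inside $\io\frac1\veps\na\veps\cdot\na\big\{\Feps(\ueps)\veps\big\}$ when differentiating $\frac12\io\frac{|\na\veps|^2}{\veps}$; no further integration by parts, no primitive $\wh F_\eps$, and no additional boundary term are involved. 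Your proposed route --- rewriting $\io\Feps'(\ueps)\na\ueps\cdot\na\veps=-\io\wh F_\eps(\ueps)\Del\veps+\int_{\pO}\wh F_\eps(\ueps)\dN\veps$ and then substituting $\Del\veps=v_{\eps t}+\Feps(\ueps)\veps$ --- generates a boundary term and a bulk term $\io\wh F_\eps(\ueps)v_{\eps t}$ which appear nowhere in (\ref{3.1}) and cancel against nothing in your scheme, except by performing the inverse integration by parts on the $\veps$-side, i.e.\ by returning to the direct cancellation; the appeal to ``comparability'' of $\xi\Feps'(\xi)$ and $\Feps(\xi)$ is beside the point, since the cancellation is exact by the chain rule for any smooth $\Feps$. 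You explicitly leave this as ``one must check'', so the step as proposed would not deliver the stated identity.

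Second, your displayed intermediate identity for $\frac{d}{dt}\io\frac{|\na\veps|^2}{\veps}$ double-counts: it lists both $-2\io\veps|D^2\ln\veps|^2$ and the untouched terms $\io\frac{\Del|\na\veps|^2}{\veps}-\io\frac{|\na\veps|^2\Del\veps}{\veps^2}$, and you then claim that integrating the latter by parts yields ``precisely the boundary integrals''. In fact those integrations by parts also generate the interior contributions $\io\frac{1}{\veps^2}\na\veps\cdot\na|\na\veps|^2$ (once from each term) and $-\io\frac{|\na\veps|^4}{\veps^3}$, and it is exactly these, combined with $-\io\frac{|D^2\veps|^2}{\veps}$ via $\na|\na\veps|^2=2D^2\veps\cdot\na\veps$, that sum to $-\io\veps|D^2\ln\veps|^2$; the Hessian-of-log term cannot be listed separately alongside the raw Laplacian terms. (Also, the right-hand side of (\ref{3.1}) contains two boundary integrals, not three; $-\frac12\io\frac{\Feps(\ueps)}{\veps}|\na\veps|^2$ is an interior term.) Since the lemma asserts an identity rather than an inequality, this bookkeeping is the entire content of the proof; once the cross term is cancelled directly and the Hessian term assembled as just described, your outline collapses to the paper's argument.
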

\begin{proof}
  According to the no-flux boundary condition accompanying the first equation in (\ref{0eps}),
  \be{3.2}
	\frac{d}{dt} \io \ueps\ln\ueps 
	+ \io \frac{|\na\ueps|^2}{\ueps}
	= \io \Feps'(\ueps) \na\ueps\cdot\na\veps
	\qquad \mbox{for all } t\in (0,\tme),
  \ee
  while on the basis of the second equation in (\ref{0eps}) we first compute
  \bea{3.3}
	\frac{1}{2} \frac{d}{dt} \io \frac{|\na\veps|^2}{\veps}
	&=& \io \frac{1}{\veps} \na\veps\cdot\na \Big\{ \Del\veps - \Feps(\ueps)\veps\Big\}
	- \frac{1}{2} \io \frac{|\na\veps|^2}{\veps^2} \cdot \Big\{ \Del\veps-\Feps(\ueps)\veps\Big\} \nn\\
	&=& \frac{1}{2} \io \frac{1}{\veps} \Del|\na\veps|^2
	- \io \frac{1}{\veps} |D^2\veps|^2
	- \frac{1}{2} \io \frac{|\na\veps|^2}{\veps^2} \Del\veps \nn\\
	& & - \frac{1}{2} \io \frac{\Feps(\ueps)}{\veps} |\na\veps|^2
	- \io \Feps'(\ueps) \na\ueps\cdot\na\veps
	\qquad \mbox{for all } t\in (0,\tme),
  \eea
  because $\na\veps\cdot\na\Del\veps=\frac{1}{2} \Del |\na\veps|^2 - |D^2\veps|^2$.
  Here two integrations by parts show that
  \bas
	\frac{1}{2} \io \frac{1}{\veps} \Del |\na\veps|^2
	= \frac{1}{2} \io \frac{1}{\veps^2} \na\veps\cdot\na|\na\veps|^2
	+ \frac{1}{2} \int_{\pO} \frac{1}{\veps} \frac{\pa |\na\veps|^2}{\pa\nu}
	\qquad \mbox{for all } t\in (0,\tme),
  \eas
  and that
  \bas
	- \frac{1}{2} \io \frac{|\na\veps|^2}{\veps^2} \Del\veps
	= \frac{1}{2} \io \frac{1}{\veps^2} \na\veps\cdot\na |\na\veps|^2
	- \io \frac{|\na\veps|^4}{\veps^3}
	- \frac{1}{2} \int_{\pO} \frac{|\na\veps|^2}{\veps^2} \frac{\pa\veps}{\pa\nu}
	\quad \mbox{for all } t\in (0,\tme),
  \eas
  so that since $\na |\na\veps|^2 = 2D^2\veps\cdot\na\veps$, we obtain that
  \bas
	& & \hs{-20mm}
	\frac{1}{2} \io \frac{1}{\veps} \Del |\na\veps|^2
	- \io \frac{1}{\veps} |D^2\veps|^2
	- \frac{1}{2} \io \frac{|\na\veps|^2}{\veps^2} \Del\veps
	- \frac{1}{2} \int_{\pO} \frac{1}{\veps} \frac{\pa |\na\veps|^2}{\pa\nu}
	+ \frac{1}{2} \int_{\pO} \frac{|\na\veps|^2}{\veps^2} \frac{\pa\veps}{\pa\nu} \nn\\
	&=& - \io \frac{1}{\veps} |D^2\veps|^2
	+ 2\io \frac{1}{\veps^2} \na\veps\cd (D^2\veps\cd\na\veps)
	- \io \frac{|\na\veps|^4}{\veps^3} \\
	&=& - \sum_{i,j=1}^n \io \veps \cd \bigg| \frac{\pa_{x_i x_j} \veps}{\veps} 
		- \frac{\pa_{x_i} \veps \pa_{x_j}\veps}{\veps^2} \bigg|^2 \\
	&=& - \sum_{i,j=1}^n \io \veps \cd \Big| \pa_{x_i x_j} \ln \veps \Big|^2 \\
	&=& - \io \veps |D^2 \ln\veps|^2
	\qquad \mbox{for all } t\in (0,\tme).
  \eas
  Therefore, (\ref{3.3}) is equivalent to (\ref{3.1}).
\end{proof}
In order to make use of the last term on the left of \eqref{3.1}, we will employ the following variant of
a functional inequality which for functions with vanishing normal derivative on $\pO$ has been 
documented in \cite[Lemma~3.3]{win_CPDE12}.
\begin{lem}\label{lem21}
  Let $\vp\in C^2(\bom)$ be such that $\vp>0$ in $\bom$. Then
  \be{21.1}
	\io \frac{|\na\vp|^4}{\vp^3} 
	\le (2+\sqrt{n})^2 \io \vp |D^2\ln \vp|^2
	+ 2 \int_{\pO} \frac{|\na\vp|^2}{\vp^2} \frac{\pa\vp}{\pa\nu}.
  \ee
\end{lem}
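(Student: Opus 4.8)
The plan is to prove the pointwise-in-$\bom$ algebraic identity
\[
  \frac{|\na\vp|^4}{\vp^3}
  = \vp \Big| D^2\ln\vp - \tfrac{\na\vp\mult\na\vp}{\vp^2} + \tfrac{|\na\vp|^2}{\vp^2}\,\mathrm{Id}\Big|_{\text{(something)}}^2 + \dots,
\]
but more cleanly: to express $\tfrac{|\na\vp|^4}{\vp^3}$ in terms of $\vp|D^2\ln\vp|^2$ plus a divergence whose integral, by the divergence theorem, produces exactly the boundary term in \eqref{21.1}. Concretely, writing $w:=\ln\vp$ so that $\na\vp=\vp\na w$ and $D^2\ln\vp = D^2 w$, one has $|\na\vp|^2/\vp^2 = |\na w|^2$ and $\tfrac{|\na\vp|^4}{\vp^3} = \vp|\na w|^4$. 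The identity to aim for is
\[
  \vp|\na w|^4
  = \na\cdot\Big(\vp|\na w|^2\na w\Big)
    - \vp\,\na w\cdot\na\big(|\na w|^2\big)
    - \vp|\na w|^2\,\Del w .
\]
Then $|\na w|^2\Del w$ and $\na w\cdot\na(|\na w|^2) = 2\na w\cdot(D^2w\cdot\na w)$ must be dominated, after integration, by $\io\vp|D^2w|^2 = \io\vp|D^2\ln\vp|^2$, using Young's inequality together with the elementary pointwise bounds $|\Del w|\le\sqrt n\,|D^2w|$ and $|\na w\cdot(D^2w\cdot\na w)|\le |D^2w|\,|\na w|^2$. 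The factor $(2+\sqrt n)^2$ strongly suggests that one should split the integrand as a product $|\na w|\cdot\big(|\na w|(2|D^2w| + \sqrt n\,|D^2w|)\big)$-type expression and apply Young's inequality with the right weights so that the $|\na w|^4$ contributions can be absorbed on the left and the $|D^2 w|^2$ contribution carries the constant $(2+\sqrt n)^2$.

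First I would carry out the integration by parts: multiply $\vp|\na w|^4$ by $1$, integrate over $\Om$, and integrate the term $\na\cdot(\vp|\na w|^2\na w)$ via the divergence theorem to get $\int_{\pO}\vp|\na w|^2\,\pa_\nu w$. Translating back through $\vp=\e^{w}$, $\na w=\na\vp/\vp$, this boundary term is $\int_{\pO}\tfrac{|\na\vp|^2}{\vp^2}\,\pa_\nu\vp$, which after the factor $2$ from Young's inequality (below) matches the right-hand side of \eqref{21.1}. Next I would estimate the two interior remainder terms: using $\na w\cdot\na(|\na w|^2)=2\na w\cdot(D^2w\cdot\na w)$ one gets $\big|\vp\,\na w\cdot\na(|\na w|^2)\big|\le 2\vp|D^2w|\,|\na w|^2$, and $\big|\vp|\na w|^2\Del w\big|\le\sqrt n\,\vp|D^2w|\,|\na w|^2$, so the combined interior remainder is bounded by $(2+\sqrt n)\io\vp|D^2w|\,|\na w|^2$. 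Finally, applying Young's inequality in the form $(2+\sqrt n)ab\le\tfrac12 a^2\cdot? $ — more precisely, writing $ab$ with $a=(2+\sqrt n)|D^2w|$, $b=|\na w|^2$ weighted against $\vp$, I would use $(2+\sqrt n)\vp|D^2 w|\,|\na w|^2\le (2+\sqrt n)^2\vp|D^2w|^2 + \tfrac14\vp|\na w|^4$ — but $\tfrac14$ is not enough to absorb against the coefficient $1$ in front of $\io\vp|\na w|^4$, so the correct split is $(2+\sqrt n)\vp|D^2w|\,|\na w|^2 \le (2+\sqrt n)^2\vp|D^2 w|^2 + \tfrac14\vp|\na w|^4$; wait — $\tfrac14<1$, so this \emph{does} allow absorption, leaving $\tfrac34\io\vp|\na w|^4$ on the left; dividing, one overshoots the stated constant. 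Hence the genuinely correct choice must be Young with parameter tuned so that the $|\na w|^4$-coefficient on the right is exactly $1$ and the $|D^2w|^2$-coefficient is exactly $(2+\sqrt n)^2$: indeed $(2+\sqrt n)XY\le X^2(2+\sqrt n)^2\cdot\tfrac{1}{4\eta}\cdot? $ — the clean version is $(2+\sqrt n)XY\le (2+\sqrt n)^2X^2 + \tfrac14 Y^2$ fails to give coefficient $1$, so one instead keeps the divergence term with its factor and uses $2ab\le a^2+b^2$ on a regrouped expression; the bookkeeping here is exactly the point where care is needed.

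\textbf{Main obstacle.} The one delicate step is the last one: arranging the Young-type estimate so that the $\vp|\na w|^4$ terms generated on the right are reabsorbed into the left-hand side $\io\tfrac{|\na\vp|^4}{\vp^3}=\io\vp|\na w|^4$ with \emph{exactly} unit coefficient while the $\vp|D^2\ln\vp|^2$ term emerges with precisely the constant $(2+\sqrt n)^2$ and the boundary term with coefficient $2$. This forces a specific choice of weights in Young's inequality (and possibly keeping part of $\na\cdot(\vp|\na w|^2\na w)$ paired with the quadratic terms before integrating by parts, mirroring the Neumann computation in \cite[Lemma~3.3]{win_CPDE12}); everything else — the pointwise identity, the bounds $|\Del w|\le\sqrt n|D^2w|$ and $|\na w\cdot D^2w\cdot\na w|\le|D^2w||\na w|^2$, and the divergence theorem — is routine. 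I would therefore model the computation closely on the cited Neumann lemma, simply retaining the boundary contribution $\int_{\pO}\tfrac{|\na\vp|^2}{\vp^2}\pa_\nu\vp$ that vanished there under $\pa_\nu\vp=0$.
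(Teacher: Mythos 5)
Your approach is exactly the paper's: rewrite $\io\frac{|\na\vp|^4}{\vp^3}=\io|\na\ln\vp|^2\,\na\ln\vp\cd\na\vp$, integrate by parts to produce the boundary term $\int_{\pO}\frac{|\na\vp|^2}{\vp^2}\frac{\pa\vp}{\pa\nu}$ plus the two interior remainders, bound those pointwise by $(2+\sqrt{n})\,\vp\,|D^2\ln\vp|\,|\na\ln\vp|^2$ via $|\Del\ln\vp|\le\sqrt{n}\,|D^2\ln\vp|$ and the Cauchy--Schwarz bound on $\na\ln\vp\cd(D^2\ln\vp\cd\na\ln\vp)$, and finish with Young. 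The only thing you leave unresolved is the final Young split, which you flag as the ``main obstacle''; it is in fact the most standard choice. With $X=\sqrt{\vp}\,|D^2\ln\vp|$ and $Y=\sqrt{\vp}\,|\na\ln\vp|^2$ (so that $Y^2=\frac{|\na\vp|^4}{\vp^3}$), use
\begin{equation*}
  (2+\sqrt{n})\,XY\;\le\;\frac12\,Y^2+\frac{(2+\sqrt{n})^2}{2}\,X^2 ,
\end{equation*}
i.e.\ the symmetric weights $\frac12$--$\frac12$, not the $1$--$\frac14$ weights you tried. Absorbing $\frac12\io\frac{|\na\vp|^4}{\vp^3}$ into the left-hand side leaves
\begin{equation*}
  \frac12\io\frac{|\na\vp|^4}{\vp^3}\;\le\;\frac{(2+\sqrt{n})^2}{2}\io\vp|D^2\ln\vp|^2+\int_{\pO}\frac{|\na\vp|^2}{\vp^2}\frac{\pa\vp}{\pa\nu},
\end{equation*}
and multiplying by $2$ yields \eqref{21.1} with precisely the constants $(2+\sqrt{n})^2$ and $2$; the factor $2$ on the boundary integral is nothing but this final doubling. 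So there is no delicate tuning and no need to regroup the divergence term before integrating by parts --- once you commit to the symmetric Young split, the constants come out exactly. Apart from this last (elementary) step being left open, the argument is complete and coincides with the paper's proof.
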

\begin{proof}
  We integrate by parts to see that
  \bas
	\io \frac{|\na\vp|^4}{\vp^3}
	&=& \io |\na\ln \vp|^2 \na\ln \vp\cd \na\vp \\
	&=& - \io \vp\na\ln\vp\cd \na |\na\ln\vp|^2
	- \io \vp |\na\ln \vp|^2 \Del \ln \vp
	+ \int_{\pO} \vp |\na\ln \vp|^2 \frac{\pa\ln\vp}{\pa\nu} \\
	&=& -2 \io \frac{1}{\vp} \na\vp \cdot (D^2\ln\vp \cdot\na\vp)
	- \io \frac{|\na\vp|^2}{\vp} \Del\ln\vp
	+ \int_{\pO} \frac{|\na\vp|^2}{\vp^2} \frac{\pa\vp}{\pa\nu}.
  \eas
  As
  \bas
	-2 \io \frac{1}{\vp} \na\vp \cdot (D^2\ln\vp \cdot\na\vp)
	- \io \frac{|\na\vp|^2}{\vp} \Del\ln\vp 
	&\le& (2+\sqrt{n}) \io \frac{|\na\vp|^2}{\vp} |D^2\ln\vp| \\
	&\le& \frac{1}{2} \io \frac{|\na\vp|^4}{\vp^3}
	+ \frac{(2+\sqrt{n})^2}{2} \io \vp |D^2\ln\vp|^2
  \eas
  by Young's inequality, this implies (\ref{21.1}).
\end{proof}
Now an exploitation of the latter in the context of (\ref{3.1}) shows that the boundary regularity features obtained
in Corollary \ref{cor10} imply the following spatially global estimates.
\begin{lem}\label{lem11}
  There exists $C>0$ such that for each $\eps\in [0,1)$, writing $\te=\min\{1,\frac{1}{2}\tme\}$, we have
  \be{11.1}
	\io \ueps(\cdot,t) \ln \ueps(\cdot,t) \le C
	\qquad \mbox{for all $t\in (0,\tme)$}
  \ee
  and
  \be{11.01}
	\io |\na\veps(\cdot,t)|^2 \le C
	\qquad \mbox{for all $t\in (0,\tme)$}
  \ee
  as well as
  \be{11.2}
	\int_t^{t+\te} \io \frac{|\na\ueps|^2}{\ueps} \le C
	\qquad \mbox{for all $t\in [0,\tme-\te)$}
  \ee
  and
  \be{11.4}
	\int_t^{t+\te} \io |\na\veps|^4 \le C
	\qquad \mbox{for all $t\in [0,\tme-\te)$.}
  \ee
\end{lem}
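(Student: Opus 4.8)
The plan is to integrate the identity \eqref{3.1} from Lemma~\ref{lem3}, using Lemma~\ref{lem21} to turn the good term $\io\veps|D^2\ln\veps|^2$ into control of $\io\frac{|\na\veps|^4}{\veps^3}$, and to absorb all boundary integrals via Corollary~\ref{cor10}. Concretely, I would first rewrite the right-hand side of \eqref{3.1}: the term $-\frac12\io\frac{\Feps(\ueps)}{\veps}|\na\veps|^2$ has a favorable sign and can be dropped (or kept), while the two boundary integrals are exactly the quantities bounded (after time-integration over $[t,t+\te)$, resp.\ pointwise) in \eqref{10.1} and \eqref{10.3}. Since $0<\veps\le\|v^{(0)}\|_{L^\infty}$ by \eqref{vinfty}, one has $\veps|D^2\ln\veps|^2\ge c\,\veps^{-3}|\na\veps|^2|D^2\ln\veps|^2$-type lower bounds only after applying Lemma~\ref{lem21}: that lemma gives $\io\frac{|\na\veps|^4}{\veps^3}\le (2+\sqrt n)^2\io\veps|D^2\ln\veps|^2+2\int_{\pO}\frac{|\na\veps|^2}{\veps^2}\frac{\pa\veps}{\pa\nu}$, so, bounding the boundary term by \eqref{10.3}, we get $\io\veps|D^2\ln\veps|^2\ge \frac{1}{(2+\sqrt n)^2}\io\frac{|\na\veps|^4}{\veps^3}-C$, and then $\io\frac{|\na\veps|^4}{\veps^3}\ge \|v^{(0)}\|_{L^\infty}^{-3}\io|\na\veps|^4$.

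Next I would define the energy $\mathcal E_\eps(t):=\io\ueps\ln\ueps+\frac12\io\frac{|\na\veps|^2}{\veps}$ and note it is bounded below (uniformly in $\eps$ and $t$): $\io\ueps\ln\ueps\ge -\frac1e|\Om|$ by elementary estimates, and $\io\frac{|\na\veps|^2}{\veps}\ge 0$. From \eqref{3.1} one then obtains the differential inequality
\begin{equation*}
  \frac{d}{dt}\mathcal E_\eps+\io\frac{|\na\ueps|^2}{\ueps}+\frac{c_0}{\|v^{(0)}\|_{L^\infty}^3}\io|\na\veps|^4 \le g_\eps(t),
\end{equation*}
where $c_0=\frac{1}{2(2+\sqrt n)^2}$ and $g_\eps(t):=\frac12\int_{\pO}\frac{1}{\veps}\frac{\pa|\na\veps|^2}{\pa\nu}+C$, with $g_\eps$ satisfying $\int_t^{t+\te}g_\eps\le C$ for all admissible $t$ by \eqref{10.1}. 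Integrating over $[t,t+\te)$ and using the lower bound on $\mathcal E_\eps$ together with an upper bound on $\mathcal E_\eps$ at the initial time (finite since $u^{(0)},v^{(0)}\in W^{1,\infty}$ and $v^{(0)}$ is bounded away from zero near $\pO$ — strictly, one should argue on $(0,\tme)$ starting from $\mathcal E_\eps(t_0)$ for small $t_0>0$ and pass to the limit, or invoke continuity up to $t=0$) yields \eqref{11.2} and \eqref{11.4} once we also establish \eqref{11.1}–\eqref{11.01}.

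For the pointwise-in-time bounds \eqref{11.1} and \eqref{11.01}: I would add a damping term. Since $\io\frac{|\na\ueps|^2}{\ueps}$ controls, via Gagliardo--Nirenberg and the $L^1$ mass bound \eqref{mass}, a quantity dominating $\io\ueps\ln\ueps$ up to additive constants (a standard argument), and since $\io|\na\veps|^4\ge c\,(\io|\na\veps|^2)^2\ge$ a term controlling $\io\frac{|\na\veps|^2}{\veps}$ (using \eqref{vinfty} from below and Poincaré since $\veps-\vst\in W_0^{1,2}$), one arrives at an inequality of the form $\frac{d}{dt}\mathcal E_\eps+c_1\mathcal E_\eps\le c_2\mathcal E_\eps+g_\eps(t)$... — more carefully, $\frac{d}{dt}\mathcal E_\eps+\io\frac{|\na\ueps|^2}{\ueps}+c_0'\io|\na\veps|^4\le g_\eps$, then absorb to get $\frac{d}{dt}\mathcal E_\eps+\mathcal E_\eps\le C+g_\eps$, and finally apply Lemma~\ref{lem77} to $h=g_\eps$ together with an ODE comparison, exactly as in the proof of Lemma~\ref{lem8}, to conclude $\mathcal E_\eps(t)\le\mathcal E_\eps(0)+\frac{C\te}{1-e^{-\te}}\le C$ uniformly. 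This gives \eqref{11.1} and \eqref{11.01} simultaneously (the latter combining the bound on $\mathcal E_\eps$ with \eqref{vinfty}).

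\textbf{Main obstacle.} The delicate point is the absorption step: after invoking Lemma~\ref{lem21} one must verify that the constant $(2+\sqrt n)$ is harmless, i.e.\ that the term $-\io\veps|D^2\ln\veps|^2$ on the left of \eqref{3.1}, once converted, really does dominate $\io|\na\veps|^4$ with a strictly positive coefficient after the boundary correction \eqref{10.3} is moved to the right — this works because \eqref{10.3} is a genuine bound (not merely time-integrated), so no smallness is needed. The second subtlety is making the Gagliardo--Nirenberg/Poincaré absorptions produce a coercive term $+\mathcal E_\eps$ (or $+c\,\mathcal E_\eps$) on the left while only spending a controllable fraction of $\io\frac{|\na\ueps|^2}{\ueps}$ and $\io|\na\veps|^4$; the $L^1$ mass conservation \eqref{mass} and the Dirichlet (hence Poincaré-admissible) structure of $\veps-\vst$ are what make this possible, and one should be careful that in dimensions $n\le 5$ the relevant Gagliardo--Nirenberg exponents are admissible. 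Everything else is bookkeeping of nonnegative terms and an application of Lemma~\ref{lem77} in the now-familiar pattern.
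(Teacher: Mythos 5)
Your outline follows essentially the same route as the paper: the energy identity \eqref{3.1}, Lemma~\ref{lem21} to convert $\io \veps|D^2\ln\veps|^2$ into $\io\frac{|\na\veps|^4}{\veps^3}$ with the boundary correction controlled pointwise in time by \eqref{10.3}, the remaining boundary term handled time-integrably via \eqref{10.1}, a damping term generated from the dissipation (Gagliardo--Nirenberg plus \eqref{mass} for the entropy part), and finally Lemma~\ref{lem77} with an ODE comparison followed by integration of the differential inequality to get the space-time bounds. This is exactly the paper's argument.

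One step, as you literally wrote it, would fail: to absorb $\frac12\io\frac{|\na\veps|^2}{\veps}$ you invoke ``\eqref{vinfty} from below'' and Poincar\'e, i.e.\ a uniform positive lower bound for $\veps$. No such bound is available (the comparison argument behind \eqref{vinfty} only yields an upper bound, $v$ is being consumed, and the theorems even allow $\vst=0$), and indeed $\io\frac{|\na\veps|^2}{\veps}$ cannot be controlled by $\io|\na\veps|^4$ alone where $\veps$ is small. The repair is immediate and is what the paper does: keep the weighted dissipation $\io\frac{|\na\veps|^4}{\veps^3}$ for the absorption and use the pointwise Young inequality $\frac{|\na\veps|^2}{2\veps}\le \frac{c_4}{2}\,\frac{|\na\veps|^4}{\veps^3}+\frac{\veps}{8c_4}$ together with $\io\veps\le |\Om|\,\|v^{(0)}\|_{L^\infty(\Om)}$; only at the very end do you pass to \eqref{11.4} via $\io\frac{|\na\veps|^4}{\veps^3}\ge \|v^{(0)}\|_{L^\infty(\Om)}^{-3}\io|\na\veps|^4$, and \eqref{11.01} follows from the energy bound via $\io|\na\veps|^2\le \|v^{(0)}\|_{L^\infty(\Om)}\io\frac{|\na\veps|^2}{\veps}$, which uses \eqref{vinfty} as an upper bound, the direction actually available. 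With that correction (and no Poincar\'e needed anywhere), your proof matches the paper's.
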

\begin{proof}
  We first employ the Gagliardo-Nirenberg inequality to pick $c_1>0$ such that
  \be{11.5}
	\|\varphi\|_{L^\frac{2(n+2)}{n}(\Om)}^\frac{2(n+2)}{n}
	\le c_1\|\na\varphi\|_{L^2(\Om)}^2 \|\varphi\|_{L^2(\Om)}^\frac{4}{n}
	+ c_1 \|\varphi\|_{L^2(\Om)}^\frac{2(n+2)}{n}
	\qquad \mbox{for all } \varphi\in W^{1,2}(\Om),
  \ee
  and use that $\frac{\xi\ln\xi}{\xi^\frac{n+2}{n}} \to 0$ as $\xi\to +\infty$ in choosing $c_2>0$ such that abbreviating
  $c_3:=\io u^{(0)}$ we have
  \be{11.6}
	\xi\ln\xi \le \frac{2}{c_1 c_3^\frac{2}{n}} \xi^\frac{n+2}{n} + c_2
	\qquad \mbox{for all } \xi>0.
  \ee
  Then writing $c_4:=\frac{1}{(2+\sqrt{n})^2}$ and $c_5:=\frac{|\Om|}{8c_4} \|v^{(0)}\|_{L^\infty(\Om)}$,
  by means of (\ref{11.6}), Young's inequality, (\ref{11.5}), (\ref{mass}) and (\ref{vinfty}) we see that for each $\eps\in [0,1)$,  
  \bas
	\yeps(t):=\io \ueps(\cdot,t)\ln\ueps(\cdot,t) + \frac{1}{2} \io \frac{|\na\veps(\cdot,t)|^2}{\veps(\cdot,t)},
	\qquad t\in [0,\tme),
  \eas
  has the property that
  \bas
	\yeps(t)
	&\le& \frac{2}{c_1 c_3^\frac{2}{n}} \io \ueps^\frac{n+2}{n} + c_2 |\Om|
	+ \frac{c_4}{2} \io \frac{|\na\veps|^4}{\veps^3} 
	+ \frac{1}{8c_4} \io \veps \\
	&=& \frac{2}{c_1 c_3^\frac{2}{n}} \|\sqrt{\ueps}\|_{L^\frac{2(n+2)}{n}(\Om)}^\frac{2(n+2)}{n} + c_2|\Om|
	+ \frac{c_4}{2} \io \frac{|\na\veps|^4}{\veps^3} 
	+ \frac{1}{8c_4} \io \veps \\
	&\le& \frac{2}{c_3^\frac{2}{n}} \|\na\sqrt{\ueps}\|_{L^2(\Om)}^2 \|\sqrt{\ueps}\|_{L^2(\Om)}^\frac{4}{n}
	+ \frac{2}{c_3^\frac{2}{n}} \|\sqrt{\ueps}\|_{L^2(\Om)}^\frac{2(n+2)}{n}
	+ c_2|\Om|
	+ \frac{c_4}{2} \io \frac{|\na\veps|^4}{\veps^3} 
	+ c_5 \\
	&=& \frac{1}{2} \io \frac{|\na\ueps|^2}{\ueps} 
	+ 2c_3 + c_2|\Om| 
	+ \frac{c_4}{2} \io \frac{|\na\veps|^4}{\veps^3} 
	+ c_5
	\qquad \mbox{for all } t\in (0,\tme),
  \eas
  so that since Lemma \ref{lem21} warrants that
  \bas
	c_4 \io \frac{|\na\veps|^4}{\veps^3}
	\le \io \veps |D^2\ln\veps|^2
	+ 2c_4 \int_{\pO} \frac{|\na\veps|^2}{\veps^2} \frac{\pa\veps}{\pa\nu}
	\qquad \mbox{for all } t\in (0,\tme),
  \eas
  it follows that
  \bea{11.7}
	\hs{-6mm}
	\yeps(t) 
	+ \frac{1}{2} \io \frac{|\na\ueps|^2}{\ueps}
	+ \frac{c_4}{2} \io \frac{|\na\veps|^4}{\veps^3}
	&\le& \io \frac{|\na\ueps|^2}{\ueps}
	+ \io \veps |D^2\ln\veps|^2 \nn\\
	& & + 2c_4 \int_{\pO} \frac{|\na\veps|^2}{\veps^2} \frac{\pa\veps}{\pa\nu}
	+ c_6
	\qquad \mbox{for all } t\in (0,\tme),
  \eea
  with $c_6:=2c_3 + c_2|\Om|+c_5$.
  Accordingly, from (\ref{3.1}) we infer upon dropping a favorably signed summand therein that
  \bea{11.8}
	& & \hs{-17mm}
	\yeps'(t)+\yeps(t)
	+\frac{1}{2} \io \frac{|\na\ueps|^2}{\ueps}	
	+ \frac{c_4}{2} \io \frac{|\na\veps|^4}{\veps^3} \nn\\
	& & \hs{-5mm}
	\le \heps(t)
	:= \frac{1}{2} \int_{\pO} \frac{1}{\veps} \frac{\pa |\na\veps|^2}{\pa\nu}
	+ \Big(2c_4-\frac{1}{2}\Big) \int_{\pO} \frac{|\na\veps|^2}{\veps^2} \frac{\pa\veps}{\pa\nu}
	+ c_6
	\qquad \mbox{for all } t\in (0,\tme).
  \eea
  As Corollary \ref{cor10} provides $c_7>0$ such that
  \be{11.9}
	\int_t^{t+\te} \heps(s) ds \le c_7
	\qquad \mbox{for all $t\in [0,\tme-\te)$ and } \eps\in [0,1),
  \ee
  through Lemma \ref{lem77} this firstly ensures that for all $t\in (0,\tme)$ and $\eps\in [0,1)$,
  \bea{11.10}
	\yeps(t)
	&\le& \yeps(0) e^{-t} + \int_0^t e^{-(t-s)} \heps(s) ds \nn\\
	&\le& |\yeps(0)| + \frac{c_7\te}{1-e^{-\te}} \nn\\
	&\le& c_8:= \io u^{(0)}|\ln u^{(0)}| + \frac{1}{2} \io \frac{|\na v^{(0)}|^2}{v^{(0)}}
	+ \frac{c_7}{1-e^{-1}},
  \eea
  again because $\frac{\te}{1-e^{-\te}} \le \frac{1}{1-e^{-1}}$ for all $\eps\in [0,1)$.
  Going back to (\ref{11.8}), from this we thereupon infer that
  \bea{11.11}
	& &  \hs{-30mm}
	\frac{1}{2} \int_t^{t+\te} \io \frac{|\na\ueps|^2}{\ueps}
	+ \frac{c_4}{2} \int_t^{t+\te} \io \frac{|\na\veps|^4}{\veps^3} \nn\\
	&\le& \yeps(t) - \yeps(t+\te)
	- \int_t^{t+\te} \yeps(s) ds 
	+ \int_t^{t+\te} \heps(s) ds \nn\\
	&\le& c_8 + \frac{2|\Om|}{e} + c_7
	\qquad \mbox{for all $t\in [0,\tme-\te)$ and } \eps\in [0,1),
  \eea
  since evidently 
  \be{11.12}
	\yeps(t) \ge \io \ueps\ln \ueps \ge - \frac{|\Om|}{e}
	\qquad \mbox{for all $t\in [0,\tme)$ and } \eps\in [0,1).
  \ee
  Once more relying on (\ref{vinfty}), from (\ref{11.11}) we obtain both (\ref{11.2}) and (\ref{11.4}), whereas (\ref{11.1})
  and (\ref{11.01}) similarly result from (\ref{11.10}) due to the second inequality in (\ref{11.12}).
\end{proof}
\mysection{The two-dimensional case. Proof of Theorem \ref{theo16}}
In this section we concentrate on the two-dimensional setting of Theorem~\ref{theo16}. 
Since the solutions there will already turn out to be bounded and classical, it is not necessary to resort to an approximation by means of \eqref{0eps} for $ε>0$. Throughout this section, we will therefore directly address the solutions $(u,v):=(u_0,v_0)$ of \eqref{0eps} 
obtained for $\eps=0$.\abs
Based on the information provided by Lemma \ref{lem11}, we can combine the outcomes of two further testing procedures applied 
to (\ref{0eps}) in quite a standard manner, and thereby achieve the following key toward higher order bounds:
\begin{lem}\label{lem14}
  Let $n=2$. Then there exists $C>0$ such that the solution $(u,v)\equiv (u_0,v_0)$ of (\ref{0eps}), as corresponding
  to the choice $\eps=0$, satisfies
  \be{14.1}
	\io |\na v(\cdot,t)|^4 \le C
	\qquad \mbox{for all } t\in (0,\tm),
  \ee
  where $\tm:=T_{max,0}$ is as accordingly provided by Lemma \ref{lem1}.
\end{lem}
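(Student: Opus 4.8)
The plan is to derive \eqref{14.1} by testing the second equation in (\ref{0eps}) (with $\eps=0$) against a suitable power of $|\na v|^2$ and absorbing the resulting ill-signed boundary integral by means of Corollary \ref{cor10}. Concretely, I would compute, using $v_t=\Del v - uv$ and the pointwise identity $\na v\cd\na\Del v=\frac12\Del|\na v|^2-|D^2 v|^2$, an evolution identity of the form
\begin{align*}
  \frac{1}{4}\frac{d}{dt}\io |\na v|^4
  = \io |\na v|^2 \na v\cd\na v_t
  &= \frac{1}{2}\io |\na v|^2\Del|\na v|^2 - \io |\na v|^2|D^2v|^2 \\
  &\quad - \io |\na v|^2\na v\cd\na(uv).
\end{align*}
Integrating the first term by parts produces $-\frac12\io\big|\na|\na v|^2\big|^2$ together with the boundary term $\frac12\int_{\pO}|\na v|^2\frac{\pa|\na v|^2}{\pa\nu}$, which is exactly the quantity controlled (after time-integration over intervals of length $\te$) by \eqref{10.2} in Corollary \ref{cor10}. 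The term $-\io |\na v|^2\na v\cd\na(uv)$ is handled by a further integration by parts, moving derivatives onto $|\na v|^2\na v$; this yields contributions involving $uv|D^2v||\na v|^2$, $uv|\na v|^2\Del v$ and a boundary term $\int_{\pO}u v|\na v|^2\frac{\pa v}{\pa\nu}$, the latter being bounded on $(t,t+\te)$ by Corollary \ref{cor7} and Lemma \ref{lem9} (recall $v=\vst$ on $\pO$).

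The key structural point, and the reason $n=2$ is used, is that after these manipulations one is left with a negative term $-\frac12\io\big|\na|\na v|^2\big|^2$ and positive terms that are at worst quartic in $|\na v|$ times factors of $u$; in two dimensions the Gagliardo--Nirenberg inequality
\[
  \io |\na v|^4 = \big\|\,|\na v|^2\big\|_{L^2(\Om)}^2
  \le c\big\|\na|\na v|^2\big\|_{L^2(\Om)}\big\|\,|\na v|^2\big\|_{L^1(\Om)} + c\big\|\,|\na v|^2\big\|_{L^1(\Om)}^2
\]
together with the $L^1$-bound \eqref{4.1} for $\na v$ (or rather \eqref{11.01} for $\io|\na v|^2$, giving control of $\|\,|\na v|^2\|_{L^1}$) lets the critical cubic power be absorbed into the dissipation $\io\big|\na|\na v|^2\big|^2$. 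For the terms carrying a factor of $u$, I would use Lemma \ref{lem11}: $\io\frac{|\na u|^2}{u}$ and $\io u\ln u$ are bounded on time-intervals of length $\te$, which in $n=2$ upgrades (via Gagliardo--Nirenberg / the Trudinger--Moser-type estimate for $\io u\ln u$) to the needed spatiotemporal integrability of $u$ and $\na u$; the Young inequality then splits $\io uv|\na v|^2|D^2v|$ etc.\ into a small multiple of $\io|\na v|^2|D^2v|^2$ (controlled by $\io v|D^2\ln v|^2$ plus lower order, hence by Lemma \ref{lem11}) plus a term of the type $\io u^2|\na v|^4$, which is again absorbed after Gagliardo--Nirenberg.

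Assembling these estimates yields a differential inequality of the shape
\[
  \frac{d}{dt}\io|\na v|^4 + \io|\na v|^4 \le \heps(t) + c\Big(\io|\na v|^4\Big)^{\theta}
\]
with $\theta<1$ (or with the superlinear term already absorbed), where $\int_t^{t+\te}\heps$ is uniformly bounded by Corollary \ref{cor10}, Corollary \ref{cor7}, Lemma \ref{lem9} and Lemma \ref{lem11}; an ODE comparison argument combined with Lemma \ref{lem77} then gives the uniform-in-time bound \eqref{14.1}. The main obstacle I anticipate is the bookkeeping in bounding the mixed terms involving $u$, $\na u$, $\na v$ and $D^2 v$: one must make sure every such term can be reduced, via Young's inequality, to a small multiple of either $\io\big|\na|\na v|^2\big|^2$ or $\io v|D^2\ln v|^2$ (the available dissipative quantities) plus a term that is integrable in time by Lemma \ref{lem11} and subcritical in $\io|\na v|^4$ — and this reduction relies crucially on the two-dimensional Gagliardo--Nirenberg exponents, which is precisely where the restriction $n=2$ enters.
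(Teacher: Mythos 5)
Your starting identity for $\frac14\frac{d}{dt}\io|\na v|^4$, the use of (\ref{10.2}) from Corollary \ref{cor10} for the boundary term $\frac12\int_{\pO}|\na v|^2\frac{\pa|\na v|^2}{\pa\nu}$, and the two-dimensional Gagliardo--Nirenberg absorption of $|\na v|^6$-type terms into $\io\big|\na|\na v|^2\big|^2$ using (\ref{11.01}) all coincide with the paper's argument. The decisive difference is your treatment of the coupling term $-\io|\na v|^2\na v\cd\na(uv)$, and there your plan has a genuine gap. The paper does \emph{not} integrate by parts a second time: it expands $\na(uv)=u\na v+v\na u$, keeps the term $\|v^{(0)}\|_{L^\infty(\Om)}\io|\na u|\,|\na v|^3$, and -- crucially -- couples the functional with $\io u^2$, whose evolution (\ref{14.2}) supplies the dissipation $\io|\na u|^2$. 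That dissipation is what absorbs both the cross term $\io|\na u|\,|\na v|^3$ (after Young, with the resulting $|\na v|^6$ portion absorbed via Gagliardo--Nirenberg) and the term $\io u^3$ arising from $\io u^2|\na v|^2$, through the Biler--Hebisch--Nadzieja estimate (\ref{14.7}), $\io u^3\le\eta\io|\na u|^2+c_3$, which is where the $L^1$-bound on $u\ln u$ from Lemma \ref{lem11} enters.

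In your decoupled scheme no dissipation in $u$ is available, and this is where the argument does not close. After your second integration by parts, Young's inequality applied to $\io uv|\na v|^2|D^2v|$ yields a term of the form $\io u^2v^2|\na v|^2$ (not $\io u^2|\na v|^4$ as you wrote), and every further splitting of $\io u^2|\na v|^2$ against the available dissipative quantities produces $\io u^3$ or an equivalent supercritical expression in $u$. Lemma \ref{lem11} only provides the uniform bound on $\io u\ln u$ and unit-interval bounds on $\io\frac{|\na u|^2}{u}$, which in $n=2$ upgrade to $\int_t^{t+1}\io u^2\le C$ (the two-dimensional analogue of Lemma \ref{lem22}) but to no time-integrability of $\io u^3$; moreover, an inequality of the type $\io u^3\le C\big(\io u\ln u\big)\io\frac{|\na u|^2}{u}+C$ is false (test it on concentrating bumps of fixed mass), so $\io u^3$ cannot be relegated to the inhomogeneity $h(t)$ in your ODE argument either. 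The only known route, $\io u^3\le\eta\io|\na u|^2+c_3$, forces the coupling with $\io u^2$ -- and once that coupling is in place the second integration by parts is superfluous and one is back at the paper's proof. A secondary inaccuracy: a small multiple of $\io|\na v|^2|D^2v|^2$ is not ``controlled by $\io v|D^2\ln v|^2$ plus lower order'' (and Lemma \ref{lem11} states no bound on that dissipation anyway); this particular point is fixable by retaining the good term $-\io|\na v|^2|D^2v|^2$ from your own identity and absorbing into it, but the missing $u$-dissipation is not, so the differential inequality you propose cannot be established along the indicated lines.
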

\begin{proof}
  On the basis of (\ref{0eps}) when restricted to $\eps=0$, by means of Young's inequality we see that
  \be{14.2}
	\frac{d}{dt} \io u^2 + \io |\na u|^2
	= - \io |\na u|^2
	+ 2\io u\na u\cdot\na v
	\le \io u^2 |\na v|^2
	\qquad \mbox{for all } t\in (0,\tm),
  \ee
  and that for all $t\in (0,\tm)$,
  \bea{14.3}
	\frac{1}{4} \frac{d}{dt} \io |\na v|^4
	&=& \io |\na v|^2 \na v\cdot\na \big\{ \Del v - uv \big\} \nn\\
	&=& \frac{1}{2} \io |\na v|^2 \Del |\na v|^2
	- \io |\na v|^2 |D^2 v|^2
	- \io u |\na v|^4
	- \io v |\na v|^2 \na u\cdot\na v \nn\\
	&=& - \frac{1}{2} \io \Big|\na |\na v|^2 \Big|^2
	+ \frac{1}{2} \int_{\pO} |\na v|^2 \frac{\pa |\na v|^2}{\pa\nu} \nn\\
	& & - \io |\na v|^2 |D^2 v|^2
	- \io u |\na v|^4
	- \io v |\na v|^2 \na u\cdot\na v \nn\\
	&\le& - \frac{1}{2} \io \Big| \na |\na v|^2 \Big|^2
	+ \frac{1}{2} \int_{\pO} |\na v|^2 \frac{\pa |\na v|^2}{\pa\nu}
	- \io v |\na v|^2 \na u\cdot\na v \nn\\
	&\le& - \frac{1}{2} \io \Big| \na |\na v|^2 \Big|^2
	+ \frac{1}{2} \int_{\pO} |\na v|^2 \frac{\pa |\na v|^2}{\pa\nu}
	+ \|v_0\|_{L^\infty(\Om)} \io |\na u| \cd |\na v|^3
  \eea
  because of (\ref{vinfty}).
  To proceed from this, we employ the Gagliardo-Nirenberg inequality to find $c_1>0$ such that
  \bea{14.4}
	\io |\na v|^6
	&=& \Big\| |\na v|^2 \Big\|_{L^3(\Om)}^3 \nn\\
	&\le& c_1 \Big\| \na |\na v|^2 \Big\|_{L^2(\Om)}^2 
	\Big\| |\na v|^2 \Big\|_{L^1(\Om)}
	+ c_1 \Big\| |\na v|^2 \Big\|_{L^1(\Om)}^3 \nn\\
	&\le& c_1 c_2 \Big\| \na |\na v|^2 \Big\|_{L^2(\Om)}^2 
	+ c_1 c_2^3
	\qquad \mbox{for all } t\in (0,\tm),
  \eea
  with finiteness of $c_2:=\sup_{t\in (0,\tm)} \io |\na v(\cdot,t)|^2$ being asserted by Lemma \ref{lem11}.
  We then fix $a>0$ suitably small such that
  \be{14.5}
	8a^2 \|v^{(0)}\|_{L^\infty(\Om)}^2
	\le \frac{a}{c_1 c_2},
  \ee
  and take $\eta>0$ small enough fulfilling
  \be{14.6}
	\frac{1}{2\eta} \ge 1 + \Big( \frac{2c_1 c_2}{a}\Big)^\frac{1}{2},
  \ee
  whereupon an application of a well-known variant of the Gagliardo-Nirenberg inequality (\cite{biler_hebisch_nadzieja})
  shows that since Lemma \ref{lem11} warrants boundedness of $(u(\cdot,t)\ln u(\cdot,t))_{t\in (0,\tm)}$ in $L^1(\Om)$,
  there exists $c_3>0$ such that
  \be{14.7}
	\io u^3 \le \eta \io |\na u|^2 + c_3
	\qquad \mbox{for all } t\in (0,\tm).
  \ee
  We now let
  \bas
	y(t):=\io u^2(\cdot,t) + a \io |\na v(\cdot,t)|^4,
	\qquad t\in [0,\tm),
  \eas
  and combine (\ref{14.2}) with (\ref{14.3}) to obtain that since
  \bas
	\io |\na u|^2 \ge \frac{1}{2} \io |\na u|^2 + \frac{1}{2\eta} \io u^3 - \frac{c_3}{2\eta}
	\qquad \mbox{for all } t\in (0,\tm)
  \eas
  and
  \bas
	2 \io \Big|\na |\na v|^2 \Big|^2
	\ge \frac{2}{c_1c_2} \io |\na v|^6 - 2c_2^2
	\qquad \mbox{for all } t\in (0,\tm)
  \eas
  by (\ref{14.7}) and (\ref{14.4}), we have
  \bea{14.8}
	& & \hs{-20mm}
	y'(t) + y(t)
	+ \frac{1}{2} \io |\na u|^2
	+ \frac{1}{2\eta} \io u^3
	+ \frac{2a}{c_1c_2} \io |\na v|^6 \nn\\
	&\le& \io u^2 + a\io |\na v|^4 \nn\\
	& & + \frac{c_3}{2\eta} + 2ac_2^2 \nn\\
	& & + \io u^2 |\na v|^2 \nn\\
	& & + 4a \|v^{(0)}\|_{L^\infty(\Om)} \io |\na u| \cdot |\na v|^3 \nn\\
	& & + 2a \int_{\pO} |\na v|^2 \frac{\pa |\na v|^2}{\pa\nu}
	\qquad \mbox{for all } t\in (0,\tm).
  \eea
  Here due to Young's inequality,
  \bas
	& & \hs{-20mm}
	\io u^2
	+ a \io |\na v|^4
	+ \io u^2 |\na v|^2
	+ 4a \|v^{(0)}\|_{L^\infty(\Om)} \io |\na u|\cd |\na v|^3 \nn\\
	&\le& \io u^2 + a\io |\na v|^4 + \io u^2 |\na v|^2
	+ \frac{1}{2} \io |\na u|^2
	+ 8a^2 \|v^{(0)}\|_{L^\infty(\Om)}^2 \io |\na v|^6 \\
	&=& \io u^2 \\
	& & + \io \Big\{ \frac{a}{2c_1 c_2} |\na v|^6 \Big\}^\frac{2}{3} \cdot (2c_1 c_2)^\frac{2}{3} a^\frac{1}{3} \\
	& & + \io \Big\{ \frac{a}{2c_1 c_2} |\na v|^6 \Big\}^\frac{1}{3} \cdot \Big(\frac{2c_1c_2}{a}\Big)^\frac{1}{3} u^2 \\
	& & 
	+ \frac{1}{2} \io |\na u|^2
	+ 8a^2 \|v^{(0)}\|_{L^\infty(\Om)}^2 \io |\na v|^6 \\
	&\le& \io u^3 + |\Om| \\
	& & 
	+ \frac{a}{2c_1c_2} \io |\na v|^6
	+ (2c_1c_2)^2 a|\Om| \\
	& & + \frac{a}{2c_1c_2} \io |\na v|^6
	+ \Big(\frac{2c_1 c_2}{a}\Big)^\frac{1}{2} \io u^3 \\
	& & + \frac{1}{2} \io |\na u|^2
	+ 8a^2 \|v^{(0)}\|_{L^\infty(\Om)}^2 \io |\na v|^6 \\
	&=& \Big\{ 1 + \Big(\frac{2c_1c_2}{a}\Big)^\frac{1}{2}\Big\} \cdot \io u^3
	+ \Big\{ \frac{a}{c_1c_2} + 8a^2 \|v^{(0)}\|_{L^\infty(\Om)}^2 \Big\} \cdot \io |\na v|^6 \\
	& & + |\Om| + (2c_1c_2)^2 a|\Om| \\
	& & + \frac{1}{2} \io |\na u|^2
	\qquad \mbox{for all } t\in (0,\tm),
  \eas
  whence drawing on (\ref{14.6}) and (\ref{14.5}) we infer from (\ref{14.8}) that
  \bas
	y'(t) + y(t)
	\le h(t):=\frac{c_3}{2\eta} + 2ac_2^2 + |\Om| + (2c_1c_2)^2 a|\Om|
	+ 2a\int_{\pO} |\na v|^2 \frac{\pa |\na v|^2}{\pa\nu}
	\qquad \mbox{for all } t\in (0,\tm).
  \eas
  Since Corollary \ref{cor10} ensures that $\sup_{t\in (0,\tm-\tau_0)} \int_t^{t+\tau_0} h(s)ds$, with $τ_0=\min\{1,\frac12\tm\}$, is finite, by means of
  Lemma \ref{lem77} this entails that $y$ is bounded in $(0,\tm)$, which in particular implies (\ref{14.1})
  with some suitably large $C>0$.
\end{proof}
Indeed, this implies boundedness in the respective first solution components.
\begin{lem}\label{lem15}
  Let $n=2$. Then there exists $C>0$ such that with $(u,v)\equiv (u_0,v_0)$ and $\tm=T_{max,0}$ taken from Lemma \ref{lem1} 
  we have
  \be{15.1}
	\|u(\cdot,t)\|_{L^\infty(\Om)} \le C
	\qquad \mbox{for all } t\in (0,\tm).
  \ee
\end{lem}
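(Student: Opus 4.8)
The plan is to derive (\ref{15.1}) from the now-available bounds by a standard semigroup/bootstrap argument applied to the first equation in (\ref{0eps}) with $\eps=0$. The key inputs are: mass conservation (\ref{mass}), the entropy bound $\io u\ln u\le C$ from Lemma~\ref{lem11}, and — crucially — the new gradient bound $\io|\na v(\cdot,t)|^4\le C$ from Lemma~\ref{lem14}, which in two dimensions gives much more than the $L^2$ bound that alone would suffice to keep things subcritical but is convenient here. First I would rewrite the $u$-equation in the variation-of-constants form $u(\cdot,t)=e^{t\Del}u^{(0)}-\int_0^t e^{(t-s)\Del}\na\cdot(u\na v)(\cdot,s)\,ds$, using the Neumann heat semigroup on $\Om$ (the no-flux condition $\dN u-u\dN v=0$ together with $v=\vst$ on $\pO$, hence $\na v$ tangential-free issues are mild here; more precisely one works with the operator generating the appropriate semigroup, or simply notes $\dN u=u\dN v$ on $\pO$ and absorbs this).

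Second, I would run the usual $L^p$-iteration (à la Horstmann–Winkler / Tao–Winkler): assuming a bound for $\|u(\cdot,t)\|_{L^p(\Om)}$ on $(0,\tm)$, use the smoothing estimate $\|e^{\sigma\Del}\na\cdot\vp\|_{L^q}\le c(1+\sigma^{-\frac12-\frac n2(\frac1p-\frac1q)})e^{-\lam\sigma}\|\vp\|_{L^p}$ together with $\|u\na v\|_{L^p}\le\|u\|_{L^{2p}}\|\na v\|_{L^{2p}}$ to upgrade integrability. In $n=2$, because $\|\na v(\cdot,t)\|_{L^4}$ is bounded uniformly (Lemma~\ref{lem14}) and, via Gagliardo–Nirenberg together with boundedness of $\io|\na v|^6$-type quantities already appearing in the proof of Lemma~\ref{lem14}, one in fact controls $\na v$ in $L^q$ for every $q<\infty$ (using that $\int_t^{t+\tau}\io|\na v|^6<\infty$ and the parabolic regularity of the $v$-equation, one even gets $\na v\in L^\infty$ eventually), the product estimate causes no loss of regularity, and finitely many iteration steps push $u$ from $L^1$ past $L^p$ for arbitrarily large $p$, hence into $L^\infty$. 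Alternatively, once $\na v\in L^q$ for some $q>n=2$ uniformly, a single application of the known result that a scalar drift-diffusion equation $u_t=\Del u-\na\cdot(u\,b)$ with $b\in L^\infty((0,\tm);L^q(\Om))$, $q>n$, and $u_0\in L^\infty$, with no-flux-type boundary condition, has bounded solutions (e.g. via Moser iteration or \cite[Lemma~3.4]{taowin_locallybounded}-type arguments) finishes it immediately.

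The main obstacle I anticipate is the interplay of the boundary condition with the semigroup representation: the no-flux condition here is $\dN u=u\dN v$, not the homogeneous Neumann condition, so one cannot directly cite the Neumann-semigroup smoothing estimates without care. The clean way around this is to test the equation directly: multiply the first equation by $u^{p-1}$, integrate, use the boundary term $\int_{\pO}u^{p-1}(\dN u-u\dN v)=0$, and estimate $\io u^{p-2}|\na u|^2$ against $\io u^p|\na v|^2$ by Young's inequality, then Gagliardo–Nirenberg, exactly as in the proof of Lemma~\ref{lem8} but now globally in space and with the genuinely bounded $\|\na v\|_{L^4}$; this yields $\frac{d}{dt}\io u^p+\io u^p\le C(p)$ for every $p\ge2$ with $C(p)$ growing at worst polynomially, and then a final Moser-type iteration (tracking the $p$-dependence of the Gagliardo–Nirenberg constants) gives the $L^\infty$ bound. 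So the real content is: (i) promote $\na v$ to high (ultimately $L^\infty$) integrability using Lemma~\ref{lem14} plus parabolic smoothing for the $v$-equation, (ii) run the $L^p$ energy estimates for $u$ avoiding boundary trouble via the natural no-flux identity, (iii) conclude by Moser iteration. None of these steps is deep in $n=2$; the work done in Lemmas~\ref{lem11} and~\ref{lem14} is precisely what makes the drift subcritical.
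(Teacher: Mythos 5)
Your proposal is correct and in essence coincides with the paper's proof: the paper simply rewrites the first equation as $u_t=\Delta u+\na\cdot(bu)$ with $b:=-\na v$, observes via Lemma \ref{lem14} that $b\in L^\infty((0,\tm);L^4(\Om))$ with $4>n=2$, and, using the no-flux identity $(\na u+bu)\cdot\nu=0$ on $\pO$, cites a known Moser-iteration boundedness result for such drift-diffusion problems — exactly the ``single application of the known result'' you mention as your alternative, while your hand-run $L^p$-testing plus Moser iteration is just that citation unpacked. (Your parenthetical claim that $\na v$ can already be promoted to $L^\infty$ at this stage is unnecessary and would be premature, but nothing in your argument relies on it.)
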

\begin{proof}
  We write the first equation in (\ref{0eps}) for $\eps=0$ in the form
  $u_t = \Delta u + \na\cdot (b(x,t) u)$, $(x,t)\in\Om\times (0,\tm)$, and note that according to Lemma \ref{lem14},
  $b:=-\na v$ belongs to $L^\infty((0,\tm);L^q(\Om))$ with $q:=4$ exceeding the considered spatial dimension.
  Since $(\na u+b(x,t)u)\cdot\nu=0$ on $\pO\times (0,\tm)$, we may therefore refer to a boundedness statement derived by means
  of a straightforward Moser-type iteration (\cite{ding_win}) to directly obtain (\ref{15.1}).
\end{proof}
For the proof of Theorem~\ref{theo16}, we are merely lacking a 
transfer of the boundedness properties we have just obtained 
to the spaces that actually occur in the extensibility criterion \eqref{ext}:
\begin{proof}[Proof of Theorem \ref{theo16}]
  Based on the outcome of Lemma \ref{lem15}, we may again utilize known smoothing properties of the Dirichlet heat semigroup
  on $\Omega$, and additionally employ a standard result on gradient H\"older 
  regularity in scalar parabolic equations (\cite{lieberman}),
  to find $c_1>0, c_2>0$ and $\theta_1\in (0,1)$ such that
  \be{16.2}
	\|v(\cdot,t)\|_{W^{1,\infty}(\Om)} \le c_1
	\qquad \mbox{for all } t\in (0,\tm)
  \ee
  and
  \bas
	\|\na v\|_{C^{\theta_1,\frac{\theta_1}{2}}(\bom\times [t,t+\tau_0])} \le c_2
	\qquad \mbox{for all $t\in (\frac{\tau_0}{4},\tm-\tau_0)$,}
  \eas
  where again $\tau_0=\min\{1,\frac{1}{2}\tm\}$.
  According to (\ref{16.2}), we may thereafter rely on the latter token once again to infer from Lemma \ref{lem15} and the first
  sub-problem in (\ref{0eps}) that with some $c_3>0$ and $\theta_2\in (0,1)$ we have
  \bas
	\|u\|_{C^{1+\theta_2,\theta_2}(\bom\times [t,t+\tau_0])} \le c_3
	\qquad \mbox{for all $t\in (\frac{\tau_0}{2},\tm-\tau_0)$,}
  \eas
  which combined with (\ref{16.2}) and (\ref{ext}) shows that Lemma \ref{lem1} indeed asserts that $\tm=\infty$, whereupon
  (\ref{16.1}) becomes a consequence of Lemma \ref{lem15} and (\ref{16.2}).
\end{proof}
\mysection{The case \texorpdfstring{$n\ge 3$}{n > 2}. Proof of Theorem \ref{theo25}}
The solution concept to be pursued in higher-dimensional cases appears to be quite natural.
\begin{defi}\label{dw}
  Let
  \be{w1}
	\left\{ \begin{array}{l}
	u\in L^1_{loc}([0,\infty);W^{1,1}(\Om)) \qquad \mbox{and} \\[1mm]
	v\in L^1_{loc}([0,\infty);W^{1,1}(\Om))
	\end{array} \right.
  \ee
  be nonnegative and such that $v(\cdot,t)-\vst\in W_0^{1,1}(\Om)$ for a.e.~$t>0$, and that
  \be{w2}
	u\na v\in L^1_{loc}(\bom\times [0,\infty);\R^n)
	\qquad \mbox{and} \qquad
	uv\in L^1_{loc}(\bom\times [0,\infty)).
  \ee
  Then $(u,v)$ will be called a {\em global weak solution of (\ref{0})} if
  \be{wu}
	-\int_0^\infty \io u\vp_t - \io u^{(0)}\vp(\cdot,0)
	= - \int_0^\infty \io \na u\cdot\na\vp
	+ \int_0^\infty \io u\na v\cdot\na\vp
  \ee
  for all $\vp\in C_0^\infty(\bom\times [0,\infty))$, and if
  \be{wv}
	-\int_0^\infty \io v\vp_t - \io v^{(0)}\vp(\cdot,0)
	= - \int_0^\infty \io \na v\cdot\na\vp
	- \int_0^\infty uv\vp
  \ee
  for all $\vp\in C_0^\infty(\Om\times [0,\infty))$.
\end{defi}
In order to construct such solutions, we now utilize solutions of the approximate versions of \eqref{0eps}, that is, those
corresponding to positive values of $\eps$. 
As can easily be seen, the strength of the accordingly regularizing features $F_\eps$ is sufficient to ensure
that each of these solutions is global in time: 
\begin{lem}\label{lem13}
  Let $n\ge 3$ and $\eps\in (0,1)$. Then $\tme=\infty$.
\end{lem}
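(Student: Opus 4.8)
The plan is to establish global existence for the regularized problems \eqref{0eps} with $\eps>0$ by showing that the extensibility alternative in \eqref{ext} cannot be triggered at a finite $\tme$, i.e.\ that $\|\ueps(\cdot,t)\|_{W^{1,q}(\Om)}+\|\veps(\cdot,t)\|_{W^{1,q}(\Om)}$ stays bounded on finite time intervals for some (hence all) $q>n$. The crucial structural advantage when $\eps>0$ is that $\Feps$ is globally bounded, namely $0\le\Feps(\xi)\le\frac1\eps$ by \eqref{F0}, and $\ueps\Feps'(\ueps)\le\frac1{\eps^2}$ as well. Thus the taxis coefficient $\ueps\Feps'(\ueps)$ and the consumption factor $\Feps(\ueps)$ are \emph{a priori bounded} in $L^\infty$ uniformly in space and time (for each fixed $\eps$), which collapses the coupling to an essentially semilinear, subcritical situation.

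First I would record that, by Lemma \ref{lem2}, $\|\veps(\cdot,t)\|_{L^\infty(\Om)}\le\|v^{(0)}\|_{L^\infty(\Om)}$ on $(0,\tme)$, and that $\io\ueps$ is conserved. Next, writing the second equation as $v_{\eps t}=\Delta\veps - \Feps(\ueps)\veps$ with inhomogeneous Dirichlet data $\veps=\vst$, and using $0\le\Feps(\ueps)\veps\le\frac1\eps\|v^{(0)}\|_{L^\infty}$, standard parabolic $L^p$--$L^q$ smoothing for the Dirichlet heat semigroup on $\Om$ (as already invoked in Lemma \ref{lem4}) yields $\veps\in L^\infty_{loc}([0,\tme);W^{1,q}(\Om))$ for every $q<\infty$; in fact one gets $\na\veps$ bounded on $(0,\tme)$ in every $L^q$, uniformly on compact time intervals, with bounds depending on $\eps$. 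Then I would turn to the first equation $u_{\eps t}=\Delta\ueps-\na\cdot(\ueps\Feps'(\ueps)\na\veps)$ with no-flux boundary condition. Since $|\ueps\Feps'(\ueps)|\le\frac1{\eps^2}$, the drift term $\na\cdot(b_\eps)$ with $b_\eps:=\ueps\Feps'(\ueps)\na\veps$ satisfies $b_\eps\in L^\infty_{loc}([0,\tme);L^q(\Om))$ with $q>n$; hence a Moser-type iteration (the same $L^\infty$ bound from \cite{ding_win} used in Lemma \ref{lem15}) gives $\ueps\in L^\infty_{loc}([0,\tme);L^\infty(\Om))$.

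With $\ueps$ locally bounded, the consumption term $\Feps(\ueps)\veps$ is locally bounded, so parabolic regularity (Dirichlet heat semigroup smoothing plus the gradient Hölder estimate of \cite{lieberman}, exactly as in the proof of Theorem \ref{theo16}) upgrades $\veps$ to be bounded in $W^{1,\infty}(\Om)$ on finite time intervals, and then the first equation, now with a bounded Hölder-continuous drift, yields $\ueps$ bounded in $C^{1+\theta}$ on finite time intervals. In particular $\|\ueps(\cdot,t)\|_{W^{1,q}(\Om)}+\|\veps(\cdot,t)\|_{W^{1,q}(\Om)}$ remains finite as $t\nearrow\tme$ for any $q>n$, so \eqref{ext} forces $\tme=\infty$.

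The only place that needs a little care — and what I expect to be the mildest of obstacles rather than a genuine difficulty — is the first bootstrap step: because the drift $b_\eps=\ueps\Feps'(\ueps)\na\veps$ depends on $\ueps$ through the bounded factor $\ueps\Feps'(\ueps)$, one should order the estimates correctly (first get $\na\veps\in L^q$ from the already available $L^1$ mass bound on $\ueps$ via the boundedness of $\Feps(\ueps)\veps$, \emph{then} feed this into the Moser iteration for $\ueps$), and one must keep track of the fact that all constants may blow up as $\eps\to0$ — which is harmless here since $\eps>0$ is fixed, and the $\eps$-uniform estimates needed for the later passage to the limit are precisely those furnished independently by Lemma \ref{lem11} and Corollary \ref{cor10}. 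No new idea beyond combining Lemma \ref{lem2}, the heat-semigroup smoothing already cited, and the Moser iteration of \cite{ding_win} is required.
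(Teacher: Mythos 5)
Your proposal is correct and follows essentially the same route as the paper: for fixed $\eps>0$ one exploits $0\le\Feps\le\frac1\eps$ and the boundedness of $\xi\Feps'(\xi)$ to see that $\Feps(\ueps)\veps$ is bounded, deduces gradient regularity of $\veps$, applies the Moser iteration of \cite{ding_win} to obtain $\ueps\in L^\infty$, and then uses the parabolic regularity of \cite{lieberman} to contradict (or preclude) the extensibility criterion \eqref{ext}. The only (harmless) difference is that you insert an intermediate semigroup step giving $\na\veps\in L^q$ with $q>n$, whereas the paper obtains $C^{1+\theta,\theta}$ bounds for $\veps$ directly from \cite{lieberman}.
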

\begin{proof}
  Supposing for contradiction that $\tme$ be finite for some $\eps\in (0,1)$, we note that since $0\le\Feps\le\frac{1}{\eps}$
  and hence $\Feps(\ueps)\veps$ belongs to $L^\infty(\Om\times (0,\tme))$ by (\ref{vinfty}), the standard result on
  parabolic $C^{1+\theta,\theta}$ regularity from \cite{lieberman} would become applicable so as to ensure that
  \be{13.1}
	\veps\in C^{1+\theta_1,\theta_1}(\bom\times \mbox{$[\frac{1}{4}\tme,\tme]$})
  \ee
  for some $\theta_1\in (0,1)$.
  As $0 \le \xi\Feps'(\xi) \le \frac{1}{\eps}$ for all $\xi\ge 0$, this would especially assert boundedness of
  $\ueps\Feps'(\ueps)\na\veps$ in $\Om\times (\frac{1}{4}\tme,\tme)$ and hence, firstly, warrant the inclusion
  $\ueps\in L^\infty(\Om\times (\frac{1}{4}\tme,\tme))$ through the Moser iteration result from \cite{ding_win}.
  Thereafter, another application of standard parabolic regularity theory (\cite{lieberman}) would entail that, in fact,
  $\ueps\in C^{1+\theta_2,\theta_2}(\bom\times [\frac{1}{2}\tme,\tme])$ for some $\theta_2\in (0,1)$, which together with
  (\ref{13.1}) would clearly contradict (\ref{ext}).
\end{proof}
In passing to the limit $\eps\searrow0$, we will use the following consequences of Lemma \ref{lem11} on further
spatio-temporal bounds.
\begin{lem}\label{lem22}
  Let $n\ge 3$. Then there exists $C>0$ such that
  \be{22.1}
	\int_t^{t+1} \io \ueps^\frac{n+2}{n} \le C
	\qquad \mbox{for all $t\ge 0$ and } \eps\in (0,1)
  \ee
  and
  \be{22.2}
	\int_t^{t+1} \io |\na\ueps|^\frac{n+2}{n+1} \le C
	\qquad \mbox{for all $t\ge 0$ and } \eps\in (0,1).
  \ee
\end{lem}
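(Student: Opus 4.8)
\textbf{Proof plan for Lemma \ref{lem22}.}
The plan is to derive both bounds simultaneously from the space-time estimates already assembled in Lemma \ref{lem11}, namely the temporally uniform bound on $\io \ueps\ln\ueps$ together with the integrated bound on $\int_t^{t+\te}\io\frac{|\na\ueps|^2}{\ueps}$, combined with the fixed-time mass conservation (\ref{mass}). First I would recall that $\te=\min\{1,\frac12\tme\}$ equals $1$ here, since $\tme=\infty$ by Lemma \ref{lem13}, so the integrated estimates of Lemma \ref{lem11} are genuinely estimates over unit time intervals $[t,t+1]$. For (\ref{22.1}), the natural route is a Gagliardo--Nirenberg interpolation: writing $w:=\sqrt{\ueps}$, one has $\io\ueps^\frac{n+2}{n}=\|w\|_{L^\frac{2(n+2)}{n}(\Om)}^\frac{2(n+2)}{n}$, and the inequality
\begin{equation*}
	\|w\|_{L^\frac{2(n+2)}{n}(\Om)}^\frac{2(n+2)}{n}
	\le c_1\|\na w\|_{L^2(\Om)}^2\|w\|_{L^2(\Om)}^\frac{4}{n}
	+ c_1\|w\|_{L^2(\Om)}^\frac{2(n+2)}{n}
\end{equation*}
(as already used in (\ref{11.5})) turns this into $c_1\io\frac{|\na\ueps|^2}{\ueps}\cdot\big(\io\ueps\big)^\frac2n + c_1\big(\io\ueps\big)^\frac{n+2}{n}$ after noting $\|\na w\|_{L^2}^2=\frac14\io\frac{|\na\ueps|^2}{\ueps}$ and $\|w\|_{L^2}^2=\io\ueps$. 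Integrating in time over $[t,t+1]$ and inserting (\ref{mass}) for the $L^1$ factors and (\ref{11.2}) for the gradient term yields (\ref{22.1}).

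For (\ref{22.2}) I would estimate, via Hölder's inequality with exponents $\frac{n+2}{n}$ and $\frac{n+2}{2}$, that pointwise in time
\begin{equation*}
	\io |\na\ueps|^\frac{n+2}{n+1}
	= \io \Big(\frac{|\na\ueps|^2}{\ueps}\Big)^\frac{n+2}{2(n+1)} \ueps^\frac{n+2}{2(n+1)}
	\le \Big(\io\frac{|\na\ueps|^2}{\ueps}\Big)^\frac{n+2}{2(n+1)} \Big(\io\ueps^\frac{n+2}{n}\Big)^\frac{n}{2(n+1)},
\end{equation*}
where the exponents are chosen so that $\frac{n+2}{2(n+1)}+\frac{n}{2(n+1)}=1$ and the powers of $|\na\ueps|$ and of $\ueps$ match $\frac{n+2}{n+1}$ and $\frac{n+2}{n}$ respectively; one checks $\frac{n+2}{2(n+1)}\cdot 2=\frac{n+2}{n+1}$ and the $\ueps$-exponent $\frac{n+2}{2(n+1)}\cdot\big(-1\big)+\frac{n}{2(n+1)}\cdot\frac{n+2}{n}=\frac{n+2}{2(n+1)}$, which has integral exponent $\frac{n+2}{2(n+1)}\cdot\frac{2(n+1)}{n}=\frac{n+2}{n}$ against the second factor --- so the second factor is indeed $\big(\io\ueps^\frac{n+2}{n}\big)^\frac{n}{2(n+1)}$. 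Since $\frac{n+2}{2(n+1)}\le 1$, a further application of Young's inequality, or simply integrating this pointwise bound over $[t,t+1]$ and applying Hölder in time with the same pair of exponents, converts it into a constant times $\big(\int_t^{t+1}\io\frac{|\na\ueps|^2}{\ueps}\big)^\frac{n+2}{2(n+1)}\big(\int_t^{t+1}\io\ueps^\frac{n+2}{n}\big)^\frac{n}{2(n+1)}$, which is bounded by (\ref{11.2}) and the already-established (\ref{22.1}).

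The only mild subtlety, rather than a genuine obstacle, is bookkeeping the exponents so that the Hölder splitting in the $|\na\ueps|^\frac{n+2}{n+1}$ estimate lands exactly on the two quantities one controls --- the choice $\frac{|\na\ueps|^2}{\ueps}$ for the "good" factor is forced, and then the conjugate exponent is determined, so there is really no freedom and the computation is routine. Everything else is a direct appeal to Lemma \ref{lem11}, (\ref{mass}), and the fact that $\te=1$ by virtue of Lemma \ref{lem13}, so no new estimate is needed.
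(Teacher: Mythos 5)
Your argument is correct and follows essentially the same route as the paper: the Gagliardo--Nirenberg interpolation of $\io\ueps^{\frac{n+2}{n}}$ against $\io\frac{|\na\ueps|^2}{\ueps}$ and $\io\ueps$ for (\ref{22.1}), and the splitting $|\na\ueps|^{\frac{n+2}{n+1}}=\big(\frac{|\na\ueps|^2}{\ueps}\big)^{\frac{n+2}{2(n+1)}}\ueps^{\frac{n+2}{2(n+1)}}$ for (\ref{22.2}), the only (immaterial) difference being that the paper closes the second estimate with a pointwise Young inequality, $\io|\na\ueps|^{\frac{n+2}{n+1}}\le\io\frac{|\na\ueps|^2}{\ueps}+\io\ueps^{\frac{n+2}{n}}$, rather than your Hölder-in-space-then-in-time variant. (Your intermediate exponent ``self-check'' for the $\ueps$-power is garbled --- the exponents of $\ueps$ in the two factors cancel to $0$, as they must --- but the displayed Hölder estimate itself is correct.)
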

\begin{proof}
  From the Gagliardo-Nirenberg inequality and (\ref{mass}) we obtain $c_1>0$ and $c_2>0$ such that
  \bas
	\io \ueps^\frac{n+2}{n}
	= \|\sqrt{\ueps}\|_{L^\frac{2(n+2)}{n}(\Om)}^{\frac{2(n+2)}n}
	&\le& c_1 \|\na\sqrt{\ueps}\|_{L^2(\Om)}^2 \|\sqrt{\ueps}\|_{L^2(\Om)}^\frac{4}{n}
	+ c_1 \|\sqrt{\ueps}\|_{L^2(\Om)}^\frac{2(n+2)}{n} \\
	&\le& c_2 \io \frac{|\na\ueps|^2}{\ueps} + c_2
	\qquad \mbox{for all $t> 0$ and } \eps\in (0,1),
  \eas
  and an application of Young's inequality shows that
  \bas
	\io |\na\ueps|^\frac{n+2}{n+1}
	&=& \io \Big\{ \frac{|\na\ueps|^2}{\ueps} \Big\}^\frac{n+2}{2(n+1)} \cdot \ueps^\frac{n+2}{2(n+1)} \\
	&\le& \io \frac{|\na\ueps|^2}{\ueps} + \io \ueps^\frac{n+2}{n}
	\qquad \mbox{for all $t> 0$ and } \eps\in (0,1).
  \eas
  Therefore, both (\ref{22.1}) and (\ref{22.2}) result from (\ref{11.2}).
\end{proof}
In preparation of an Aubin--Lions type argument, we additionally require some weak estimates for time derivatives.
\begin{lem}\label{lem23}
  Let $n\in \{3,4,5\}$. Then for all $T>0$ one can find $C(T)>0$ such that
  \be{23.1}
	\int_0^T \|u_{\eps t}(\cdot,t)\|_{(W^{1,1+\frac{5n+2}{6-n}}(\Om))^\star}^{1+\frac{6-n}{5n+2}} dt \le C(T)
	\qquad \mbox{for all } \eps\in (0,1)
  \ee
  and
  \be{23.2}
	\int_0^T \|v_{\eps t}(\cdot,t)\|_{(W_0^{1,1+\frac{(n+1)(n-2)}{3n+2}}(\Om))^\star}^\frac{n+2}{n} \le C(T)
	\qquad \mbox{for all } \eps\in (0,1).
  \ee
\end{lem}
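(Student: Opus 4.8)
The plan is to test the two equations in \eqref{0eps} against suitable test functions, bound the resulting terms by the a priori estimates already at our disposal, and then dualize.

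For \eqref{23.1}: Given $\psi\in C_0^\infty(\bom)$, we have from the first equation in \eqref{0eps}, after integrating by parts using the no-flux boundary condition,
\bas
	\bigg| \io u_{\eps t} \psi \bigg|
	= \bigg| - \io \na\ueps\cdot\na\psi + \io \ueps\Feps'(\ueps) \na\veps\cdot\na\psi \bigg|
	\le \Big( \|\na\ueps\|_{L^1(\Om)} + \|\ueps\|_{L^{q_1}(\Om)} \|\na\veps\|_{L^{q_2}(\Om)} \Big) \|\na\psi\|_{L^\infty(\Om)}
\eas
for appropriate conjugate exponents, where \eqref{F} was used. First I would exploit \eqref{22.2} (giving $\na\ueps\in L^{\frac{n+2}{n+1}}$ in space-time), \eqref{22.1} (giving $\ueps\in L^{\frac{n+2}{n}}$) and \eqref{11.4} (giving $\na\veps\in L^4$), and observe that for $n\in\{3,4,5\}$ the product $\ueps\na\veps$ lies in $L^{s}$ in space-time for some $s>1$ by H\"older's inequality between $\frac{n+2}{n}$ and $4$. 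Tracking the exponents carefully one reaches integrability of the right-hand side in time to the power $1+\frac{6-n}{5n+2}$, and by the Sobolev embedding $W^{1,1+\frac{5n+2}{6-n}}(\Om)\hookrightarrow W^{1,\infty}$ being false in general, one instead uses that the dual norm of $u_{\eps t}$ against $W^{1,r}$ with $r$ large is controlled by the $L^{r'}$-norms of $\na\ueps$ and of $\ueps\na\veps$; the precise value $r=1+\frac{5n+2}{6-n}$ is exactly what makes $r'$ small enough that these quantities are time-integrable to the stated power. This is essentially bookkeeping with the Gagliardo--Nirenberg/H\"older exponents.

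For \eqref{23.2}: Given $\psi\in C_0^\infty(\Om)$, the second equation in \eqref{0eps} yields
\bas
	\bigg| \io v_{\eps t} \psi \bigg|
	= \bigg| - \io \na\veps\cdot\na\psi - \io \Feps(\ueps)\veps\,\psi \bigg|
	\le \|\na\veps\|_{L^2(\Om)} \|\na\psi\|_{L^2(\Om)} + \|\ueps\|_{L^{\frac{n+2}{n}}(\Om)} \|\veps\|_{L^\infty(\Om)} \|\psi\|_{L^{(\frac{n+2}{n})'}(\Om)},
\eas
using \eqref{F} and \eqref{vinfty}. Here the first term is bounded uniformly in $t$ by \eqref{11.01}, so it is harmless; the second is controlled via \eqref{22.1}, which is only a space-time bound, hence the time exponent $\frac{n+2}{n}$ in \eqref{23.2}. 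Since $(\frac{n+2}{n})' = \frac{n+2}{2}$, one needs $W_0^{1,\rho}(\Om)\hookrightarrow L^{\frac{n+2}{2}}(\Om)$; for $n\le 5$ this holds with $\rho=1+\frac{(n+1)(n-2)}{3n+2}$ (a short Sobolev computation verifies $\rho^* \ge \frac{n+2}{2}$), and also with $\rho=2$, so the two contributions can be merged into a single dual norm. Integrating the resulting pointwise-in-time estimate over $(0,T)$ and invoking \eqref{22.1} gives \eqref{23.2}.

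The main obstacle is purely the careful tracking of exponents: one must verify that the Sobolev embeddings invoked are valid precisely for $n\in\{3,4,5\}$ (this is where the dimensional restriction enters, since for $n\ge 6$ the relevant embedding $W_0^{1,\rho}\hookrightarrow L^{(n+2)/2}$ would force $\rho$ outside the admissible range, or the dual exponent against $\na\ueps\in L^{(n+2)/(n+1)}$ would be too large), and that the resulting time-integrability exponents match the stated $1+\frac{6-n}{5n+2}$ and $\frac{n+2}{n}$. No genuinely new estimate is needed beyond Lemma~\ref{lem11} and Lemma~\ref{lem22}; everything else is H\"older's inequality in space, followed by integration in time and a direct application of \eqref{22.1} and \eqref{22.2}.
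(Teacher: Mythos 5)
Your overall strategy --- testing each equation against $\psi$, applying H\"older in space, and then integrating in time using Lemma \ref{lem11} and Lemma \ref{lem22} --- is exactly the paper's. For (\ref{23.1}) your sketch is correct once the placeholder display involving $\|\na\psi\|_{L^\infty(\Om)}$ is replaced by the pairing you describe in prose: with $r=1+\frac{5n+2}{6-n}=\frac{4(n+2)}{6-n}$ one has $r'=\frac{4(n+2)}{5n+2}=1+\frac{6-n}{5n+2}$, the space--time H\"older product of $\ueps\in L^{\frac{n+2}{n}}$ with $\na\veps\in L^4$ lands exactly in $L^{r'}$, and $\frac{n+2}{n+1}\ge r'$ takes care of $\na\ueps$; this is precisely the bookkeeping the paper carries out (it uses $\na\psi\in L^{n+2}(\Om)$ for the diffusion term and $\na\psi\in L^{\frac{4(n+2)}{6-n}}(\Om)$ for the taxis term, both controlled because $\frac{4(n+2)}{6-n}\ge n+2$ for $n<6$).

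The gap is in your treatment of (\ref{23.2}). You estimate the first term by $\|\na\veps\|_{L^2(\Om)}\|\na\psi\|_{L^2(\Om)}$ and call it harmless via (\ref{11.01}), but $\psi$ ranges over the unit ball of $W_0^{1,\rho}(\Om)$ with $\rho=1+\frac{(n+1)(n-2)}{3n+2}=\frac{n(n+2)}{3n+2}$, which equals $\frac{15}{11}$ for $n=3$ and $\frac{12}{7}$ for $n=4$; since $\rho<2$ in these cases, $\|\na\psi\|_{L^2(\Om)}$ is not controlled by $\|\psi\|_{W^{1,\rho}(\Om)}$, so this estimate does not bound the dual norm appearing in (\ref{23.2}). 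The repair (and the paper's choice) is to pair $\na\veps\in L^4(\Om)$ with $\na\psi\in L^{4/3}(\Om)$, which only requires $\rho\ge\frac43$ (true for all $n\ge 3$), and then to invoke the space--time bound (\ref{11.4}) instead of the uniform bound (\ref{11.01}); since $\frac{n+2}{n}\le 4$, Young's inequality in time still yields (\ref{23.2}). Your handling of the second term, including the embedding $W_0^{1,\rho}(\Om)\hra L^{\frac{n+2}{2}}(\Om)$ (which is exactly critical, $\rho^\star=\frac{n+2}{2}$), is correct.
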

\begin{proof}
  We note that since $2\le n<6$, we have $1+\frac{5n+2}{6-n}=\frac{4(n+2)}{6-n}\ge n+2$, so that we can fix $c_1>0$ with the property
  that $\|\na\psi\|_{L^{n+2}(\Om)} + \|\na\psi\|_{L^\frac{4(n+2)}{6-n}(\Om)} \le c_1$ for all $\psi\in C^1(\bom)$ such that
  $\|\psi\|_{W^{1,1+\frac{5n+2}{6-n}}(\Om)} \le 1$.
  Given any such $\psi$, using (\ref{0eps}) along with the H\"older inequality, we thus obtain that since $|\Feps'|\le 1$ for 
  all $\eps\in (0,1)$ by (\ref{F}),
  \bas
	\bigg| \io u_{\eps t} \psi \bigg|
	&=& \bigg| - \io \na\ueps\cdot\na\psi
	+ \io \ueps\Feps'(\ueps) \na\veps\cdot\na\psi \bigg| \\
	&\le& \|\na\ueps\|_{L^\frac{n+2}{n+1}(\Om)} \|\na\psi\|_{L^{n+2}(\Om)}
	+ \|\ueps\|_{L^\frac{n+2}{n}(\Om)} \|\na\veps\|_{L^4(\Om)} \|\na\psi\|_{L^\frac{4(n+2)}{6-n}(\Om)} \\
	&\le& c_1 \|\na\ueps\|_{L^\frac{n+2}{n+1}(\Om)}
	+ c_1 \|\ueps\|_{L^\frac{n+2}{n}(\Om)} \|\na\veps\|_{L^4(\Om)}
	\qquad \mbox{for all $t>0$ and } \eps\in (0,1).
  \eas
  For all $t>0$ and $\eps\in (0,1)$, we therefore have
  \bea{23.3}
	\|u_{\eps t}\|_{(W^{1,1+\frac{5n+2}{6-n}}(\Om))^\star}^{1+\frac{6-n}{5n+2}}
	&\le& (2c_1)^\frac{4n+8}{5n+2} \cdot \bigg\{ \|\na\ueps\|_{L^\frac{n+2}{n+1}(\Om)}^\frac{4n+8}{5n+2}
	+ \|\ueps\|_{L^\frac{n+2}{n}(\Om)}^\frac{4n+8}{5n+2} \|\na\veps\|_{L^4(\Om)}^\frac{4n+8}{5n+2} \bigg\} \nn\\
	&\le& (2c_1)^\frac{4n+8}{5n+2} \cdot \bigg\{ \|\na\ueps\|_{L^\frac{n+2}{n+1}(\Om)}^\frac{n+2}{n+1}
	+ 1
	+ \|\ueps\|_{L^\frac{n+2}{n}(\Om)}^\frac{n+2}{n} 
	+ \|\na\veps\|_{L^4(\Om)}^4 \bigg\}
  \eea
  by Young's inequality, because $1+\frac{6-n}{5n+2}=\frac{4n+8}{5n+2}\le \frac{n+2}{n+1}$, again due to the fact that $n\ge 2$.
  Integrating (\ref{23.3}) in time shows that (\ref{23.1}) is implied by Lemma \ref{lem11} and Lemma \ref{lem22}.\abs
  Likewise, to derive (\ref{23.2}) we observe that since
  the inequality $n\ge 3$ warrants that $1+\frac{(n+1)(n-2)}{3n+2}=\frac{n(n+2)}{3n+2} \ge \frac{4}{3}$, and since
  $W^{1,\frac{n(n+2)}{3n+2}}(\Om) \hra L^\frac{n+2}{2}(\Om)$ due to the fact that $1-\frac{3n+2}{n+2}=-\frac{2n}{n+2}$, 
  with some $c_2>0$ we have
  $\|\na\psi\|_{L^\frac{4}{3}(\Om)} + \|\psi\|_{L^\frac{n+2}{2}(\Om)} \le c_1$ for all 
  $\psi\in {\cal B} :=\Big\{ \wh{\psi} \in C_0^\infty(\Om) \ \Big| \ \|\wh{\psi}\|_{W^{1,1+\frac{(n+1)(n-2)}{3n+2}}(\Om)} \le 1\Big\}$.
  Therefore, the second equation in (\ref{0eps}) shows that due to the H\"older inequality, (\ref{F}) and Young's inequality,
  \bas
	\|v_{\eps t}\|_{W_0^{1,1+\frac{(n+1)(n-2)}{3n+2}}(\Om))^\star}^\frac{n+2}{n}
	&=& \sup_{\psi\in {\cal B}}
		\bigg| \io v_{\eps t} \psi \bigg|^\frac{n+2}{n} \\
	&=& \sup_{\psi\in {\cal B}}
	\bigg| - \io \na\veps\cdot\na\psi - \io \Feps(\ueps)\veps \psi \bigg|^\frac{n+2}{n} \\
	&\le& \sup_{\psi\in {\cal B}}
	\bigg\{ \|\na\veps\|_{L^4(\Om)} \|\na\psi\|_{L^\frac{4}{3}(\Om)}
	+ \|\ueps\|_{L^\frac{n+2}{n}(\Om)} \|\veps\|_{L^\infty(\Om)} \|\psi\|_{L^\frac{n+2}{2}(\Om)} \bigg\}^\frac{n+2}{n} \\
	&\le& c_2^\frac{n+2}{n} \cdot 
	\bigg\{ \|\na\veps\|_{L^4(\Om)} 
	+ \|\ueps\|_{L^\frac{n+2}{n}(\Om)} \|\veps\|_{L^\infty(\Om)} \bigg\}^\frac{n+2}{n} \\
	&\le& (2c_2)^\frac{n+2}{n} \cdot 
	\bigg\{ \|\na\veps\|_{L^4(\Om)}^\frac{n+2}{n}
	+ \|\ueps\|_{L^\frac{n+2}{n}(\Om)}^\frac{n+2}{n} \|\veps\|_{L^\infty(\Om)}^\frac{n+2}{n} \bigg\} \\
	&\le& (2c_2)^\frac{n+2}{n} \cdot 
	\bigg\{ \|\na\veps\|_{L^4(\Om)}^4 + 1
	+ \|\ueps\|_{L^\frac{n+2}{n}(\Om)}^\frac{n+2}{n} \|\veps\|_{L^\infty(\Om)}^\frac{n+2}{n} \bigg\} 
  \eas
  for all $t>0$ and $\eps\in (0,1)$, 
  as clearly $\frac{n+2}{n} \le 4$. In view of Lemma \ref{lem11}, Lemma \ref{lem22} and (\ref{vinfty}), the inequality in (\ref{23.2})
  thus results upon an integration.
\end{proof}
Our limit passage has thereby been prepared:
\begin{lem}\label{lem24}
  Let $n\in \{3,4,5\}$. Then there exist $(\eps_j)_{j\in\N}\subset (0,1)$, fulfilling $\eps_j\searrow 0$ as $j\to\infty$,
  as well as nonnegative functions
  $u$ and $v$ on $\Om\times (0,\infty)$ which satisfy (\ref{25.01}), for which $(u(\cdot,t),v(\cdot,t))$ is radially
  symmetric for a.e.~$t>0$, for which as $\eps=\eps_j\searrow 0$ we have
  \begin{align}
	 \ueps&\to u
	 &&\mbox{in } \bigcap_{p\in [1,\frac{n+2}{n})} L^p_{loc}(\bom\times [0,\infty)) \mbox{ and a.e.~in } \Om\times (0,\infty),
	\label{24.2} \\
		 \ueps&\wto u
	 &&\mbox{in } L^{\frac{n+2}n}_{loc}(\bom\times [0,\infty)),
	\label{u-weak-1plus2overn} \\
	\na\ueps&\wto\na u
	 &&\mbox{in } L^\frac{n+2}{n+1}_{loc}(\bom\times [0,\infty)),
	\label{24.3} \\
	\veps&\to v
	 &&\mbox{in } \bigcap_{p\in [1,\infty)} L^p_{loc}(\bom\times [0,\infty)) \mbox{ and a.e.~in } \Om\times (0,\infty)
	\qquad \mbox{and} 
	\label{24.4} \\
	\na\veps&\wto\na v
	 &&\mbox{in } L^4_{loc}(\bom\times [0,\infty)),
	\label{24.5} \\
	\na\veps&\wsto\na v
	 &&\mbox{in } L^\infty_{loc}((0,\infty);L^2(\Om)),
	\label{24.5b} 
	\end{align}
  and such that $(u,v)$ is a global weak solution of (\ref{0}) in the sense of Definition \ref{dw}.
\end{lem}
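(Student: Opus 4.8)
The plan is to combine the a priori bounds already collected in Lemmas~\ref{lem11} and~\ref{lem22}, the weak time-derivative estimates from Lemma~\ref{lem23}, and the global existence of the approximate solutions from Lemma~\ref{lem13}, and then to pass to the limit $\eps=\eps_j\searrow 0$ by an Aubin--Lions compactness argument. First I would fix an arbitrary $T>0$. By Lemma~\ref{lem22}, $(\ueps)_\eps$ is bounded in $L^\frac{n+2}{n}((0,T);W^{1,\frac{n+2}{n+1}}(\Om))$ (using also (\ref{mass})), while Lemma~\ref{lem23} bounds $(u_{\eps t})_\eps$ in $L^{1+\frac{6-n}{5n+2}}((0,T);(W^{1,1+\frac{5n+2}{6-n}}(\Om))^\star)$; since $W^{1,\frac{n+2}{n+1}}(\Om) \hra\hra L^p(\Om)$ for every $p<\frac{n+2}{n}$ when $n\in\{3,4,5\}$, the Aubin--Lions lemma yields a subsequence along which $\ueps\to u$ strongly in $L^1((0,T);L^p(\Om))$ for such $p$ and, after a further diagonal extraction over $T\in\N$ and interpolation with the $L^\frac{n+2}{n}_{loc}$ bound, strongly in $L^p_{loc}(\bom\times[0,\infty))$ for all $p<\frac{n+2}{n}$ and a.e.\ in $\Om\times(0,\infty)$, which is (\ref{24.2}). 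The reflexive-space weak-compactness statements (\ref{u-weak-1plus2overn}), (\ref{24.3}), (\ref{24.5}) and (\ref{24.5b}) then follow by extracting further subsequences from the bounds in Lemma~\ref{lem22}, Lemma~\ref{lem11} and Lemma~\ref{lem2}, the limits being identified with $u$, $\na u$ and $\na v$ via the a.e.\ convergence and the strong $L^1$ convergence of the gradients in the distributional sense.

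For the second component, Lemma~\ref{lem11} and (\ref{vinfty}) bound $(\veps)_\eps$ in $L^\infty((0,T);W^{1,2}(\Om))\cap L^4((0,T);W^{1,4}(\Om))$, while Lemma~\ref{lem23} controls $(v_{\eps t})_\eps$ in $L^\frac{n+2}{n}((0,T);(W_0^{1,\cdot}(\Om))^\star)$; again Aubin--Lions (with $W^{1,2}(\Om)\hra\hra L^2(\Om)$) produces a subsequence with $\veps\to v$ strongly in $L^2_{loc}(\bom\times[0,\infty))$ and a.e., and the $L^\infty$ bound (\ref{vinfty}) upgrades this by interpolation to strong convergence in every $L^p_{loc}(\bom\times[0,\infty))$, $p<\infty$, which is (\ref{24.4}). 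The claimed regularity (\ref{25.01}) for the limit pair is read off from weak/weak-$\star$ lower semicontinuity of norms applied to these bounds, the inclusion $v-\vst\in L^\infty((0,\infty);W_0^{1,2}(\Om))$ being inherited because $\veps-\vst\in W_0^{1,2}(\Om)$ for each $t$ and weak $W^{1,2}$-limits of $W_0^{1,2}$-functions stay in $W_0^{1,2}$; radial symmetry of $u(\cdot,t)$ and $v(\cdot,t)$ passes to the limit from Lemma~\ref{lem1} through the a.e.\ convergence. The bounds (\ref{25.1})--(\ref{25.3}) asserted in Theorem~\ref{theo25} likewise follow from (\ref{11.1}), (\ref{11.01}), (\ref{22.1}), (\ref{22.2}), (\ref{11.4}) by weak lower semicontinuity together with (\ref{24.2})--(\ref{24.5}), using Fatou's lemma / convexity for the entropy term $\io u\ln u$.

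It then remains to verify that $(u,v)$ satisfies the weak formulation of Definition~\ref{dw}. For a fixed test function $\vp\in C_0^\infty(\bom\times[0,\infty))$ I would write down the weak form of (\ref{0eps}) satisfied by $(\ueps,\veps)$ (which holds in the classical, hence weak, sense by Lemmas~\ref{lem1} and~\ref{lem13}) and pass to the limit term by term: the linear terms $\io\ueps\vp_t$, $\io u^{(0)}\vp(\cdot,0)$, $\io\na\ueps\cdot\na\vp$ and $\io\na\veps\cdot\na\vp$ converge by the weak convergences (\ref{u-weak-1plus2overn}), (\ref{24.3}), (\ref{24.5}); the reaction term $\Feps(\ueps)\veps$ converges to $uv$ in $L^1_{loc}$ because $\Feps(\ueps)\to u$ a.e.\ and $0\le\Feps(\ueps)\le\ueps\to u$ in $L^1_{loc}$ by (\ref{24.2}), allowing a generalized dominated convergence (or Vitali) argument together with the strong convergence (\ref{24.4}) and boundedness (\ref{vinfty}) of $\veps$. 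The only genuinely delicate point is the chemotactic flux $\ueps\Feps'(\ueps)\na\veps$: here I would use that $\Feps'(\ueps)\to 1$ a.e.\ with $0\le\ueps\Feps'(\ueps)\le\ueps$, that $\ueps\Feps'(\ueps)\to u$ strongly in $L^p_{loc}$ for every $p<\frac{n+2}{n}$ (again by (\ref{24.2}) and dominated convergence, since $|\ueps\Feps'(\ueps)-\ueps|\le\eps\ueps^2$ is controlled in $L^{(n+2)/(2n)}_{loc}$ by (\ref{22.1})), and that $\na\veps\wto\na v$ in $L^4_{loc}$; as $\tfrac1p+\tfrac14<1$ for such $p$ when $n\le 5$, a product-of-strong-and-weak-convergence lemma gives $\ueps\Feps'(\ueps)\na\veps\wto u\na v$ in $L^1_{loc}$, which closes the identification. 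I expect this interplay of exponents in the flux term — making sure the Hölder-conjugate bookkeeping $\frac{n+2}{n}$ versus $4$ actually leaves room, which is exactly where the restriction $n\le 5$ enters — to be the main obstacle; everything else is routine once the compactness package is in place.
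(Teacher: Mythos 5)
Your proposal is correct and follows essentially the same route as the paper's proof: the identical Aubin--Lions compactness package built from Lemma~\ref{lem22}, Lemma~\ref{lem23}, Lemma~\ref{lem11} and (\ref{vinfty}), a.e.\ convergence plus Vitali/dominated convergence for $\Feps(\ueps)\veps$ and $\ueps\Feps'(\ueps)$, and the same H\"older bookkeeping $\frac{n}{n+2}+\frac14<1$ (i.e.\ $n\le 5$) to pass to the limit in the flux term. The only slip is cosmetic: Lemma~\ref{lem22} gives boundedness of $(\ueps)_\eps$ in $L^{\frac{n+2}{n+1}}\big((0,T);W^{1,\frac{n+2}{n+1}}(\Om)\big)$ rather than with exponent $\frac{n+2}{n}$ in time, which is all the Aubin--Lions argument requires.
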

\begin{proof}
  Given $T>0$, from Lemma \ref{lem22} and Lemma \ref{lem23} we know that
  \bas
	(\ueps)_{\eps\in (0,1)} \mbox{ is bounded in } L^\frac{n+2}{n+1}\big( (0,T); W^{1,\frac{n+2}{n+1}}(\Om)\big)
  \eas
  and that
  \bas
	(u_{\eps t})_{\eps\in (0,1)} \mbox{ is bounded in } 
		L^{1+\frac{6-n}{5n+2}} \Big( (0,T); \big(W^{1,1+\frac{5n+2}{6-n}}(\Om)\big)^\star \Big),
  \eas
  while according to Lemma \ref{lem11}, (\ref{vinfty}) and Lemma \ref{lem23},
  \bas
	(\veps-\vst)_{\eps\in (0,1)} \mbox{ is bounded in } L^4\big( (0,T); W^{1,4}(\Om)\big)
  \eas
  and
  \bas
	\big(\partial_t (\veps-\vst)\big)_{\eps\in (0,1)} \mbox{ is bounded in } 
		L^\frac{n+2}{n} \Big( (0,T); \big(W_0^{1,1+\frac{(n+1)(n-2)}{3n+2}}(\Om)\big)^\star \Big),
  \eas
  because clearly $\partial_t (\veps-\vst) = v_{\eps t}$.
  Therefore, two applications of an Aubin-Lions lemma (\cite{simon}) provide $(\eps_j)_{j\in\N} \subset (0,1)$
  as well as nonnegative radially symmetric functions $u\in L^\frac{n+2}{n+1}_{loc}([0,\infty);W^{1,\frac{n+2}{n+1}}(\Om))$
  and $(v-\vst)\in L^4_{loc}([0,\infty);W_0^{1,4}(\Om))$ such that $ \eps_j\searrow 0$ as $j\to\infty$, that 
  (\ref{24.3}) and (\ref{24.5}) hold, and that
  $(\ueps,\veps)\to (u,v)$ a.e.~in $\Omega\times (0,\infty)$ as
  $\eps=\eps_j\searrow 0$.
  Since furthermore $(\ueps)_{\eps\in (0,1)}$ is bounded in $L^\infty((0,\infty);L^1(\Om))$ and in 
  $L^\frac{n+2}{n}(\Om\times (0,T))$ for all $T>0$ by (\ref{mass}) and Lemma \ref{lem21}, and since 
  $(\veps)_{\eps\in (0,1)}$ is bounded in $L^\infty(\Om\times (0,\infty))$ and in $L^\infty((0,\infty);W^{1,2}(\Om))$
  according to (\ref{vinfty}) and Lemma \ref{lem11}, it is clear that actually \eqref{u-weak-1plus2overn} and \eqref{24.5b} hold and $u$ and $v$ have all the regularity features 
  in (\ref{25.01}), and that the Vitali convergence theorem along with (\ref{F0}) and (\ref{F}) 
  ensures that
  \bas
	\ueps\to u,
	\quad \Feps(\ueps)\to u
	\quad \mbox{and} \quad
	\ueps\Feps'(\ueps) \to u
	\qquad \mbox{in } \bigcap_{p\in [1,\frac{n+2}{n})} L^p_{loc}(\bom\times [0,\infty))
  \eas
  as well as
  \bas
	\veps\to v
	\qquad \mbox{in } \bigcap_{p\in [1,\infty)} L^p_{loc}(\bom\times [0,\infty))
  \eas
  as $\eps=\eps_j\searrow 0$.
  Besides especially completing the verification of (\ref{24.2}) and (\ref{24.4}), and hence especially also of all the 
  regularity requirements in Definition \ref{dw}, together with 
  (\ref{24.5})
  these two latter properties guarantee that
  \be{24.6}
	\ueps\Feps'(\ueps)\na\veps \wto u\na v
	\qquad \mbox{in } L^1_{loc}(\bom\times [0,\infty))
  \ee
  and  
  \be{24.7}
	\Feps(\ueps)\veps \to uv
	\qquad \mbox{in } L^1_{loc}(\bom\times [0,\infty))
  \ee
  as $\eps=\eps_j\searrow 0$, because
  \bas
	\frac{1}{\lim_{p\nearrow \frac{n+2}{n}} p} + \frac{1}{4} = \frac{5n+2}{4(n+2)} = 1-\frac{6-n}{4(n+2)} <1
  \eas
  and
  \bas
	\frac{1}{\lim_{p\nearrow \frac{n+2}{n}} p} + \frac{1}{\lim_{p\to \infty} p} = \frac{n}{n+2}<1.
  \eas
  Since for each $\eps\in (0,1)$ we have
  \bas
	-\int_0^\infty \io \ueps\vp_t - \io u^{(0)} \vp(\cdot,0)
	= - \int_0^\infty \io \na\ueps\cdot\na\vp
	+ \int_0^\infty \io \ueps\Feps'(\ueps)\na\veps\cdot\na\vp
  \eas
  for all $\vp\in C_0^\infty(\bom\times [0,\infty))$ and
  \bas
	- \int_0^\infty \io \veps \vp_t - \io v^{(0)} \vp(\cdot,0)
	= - \int_0^\infty \io \na\veps\cdot\na\vp
	- \int_0^\infty \io \Feps(\ueps)\veps\vp
  \eas
  for all $\vp\in C_0^\infty(\Om\times [0,\infty))$, taking $\eps=\eps_j\searrow 0$ we therefore readily obtain (\ref{wu})
  from (\ref{24.2}), (\ref{24.3}) and (\ref{24.6}), whereas (\ref{wv}) results from (\ref{24.4}), (\ref{24.5}) and (\ref{24.7}).
  Consequently, it follows that $(u,v)$ indeed forms a global weak solution of (\ref{0}) in the sense of Definition \ref{dw}.
\end{proof}
This essentially establishes our main result on global weak solvability in (\ref{0}) already.
\begin{proof}[Proof of Theorem \ref{theo25}]
  The statement on existence of a global weak solution fulfilling (\ref{25.01}) directly results from Lemma \ref{lem24}, 
  whereas the boundedness properties in (\ref{25.1}), (\ref{25.2}) and (\ref{25.3}) can readily be obtained upon combining (\ref{11.1}),
  (\ref{11.01}), (\ref{22.1}), (\ref{22.2}) and (\ref{11.4}) with (\ref{24.2}), \eqref{24.5b}, \eqref{u-weak-1plus2overn}, (\ref{24.3}) and (\ref{24.5}).
\end{proof}
\section{Stationary states}
We finally consider the stationary problem associated with (\ref{0}), that is, the boundary value problem
\begin{align}\label{stationary}
 0&=Δu-∇\cdot(u∇v)&&\text{ in } Ω\nn\\
 0&=Δv-uv&&\text{ in } Ω\\
 \frac{∂u}{∂ν}&=u\frac{∂v}{∂ν}, \quad v=\vst&&\text{ on }∂Ω,\nn
\end{align}
and our arguments in this regard will be closely 
related to those in \cite{braukhoff_lankeit}, where the second equation was instead supplemented by Robin-type boundary conditions. 
Dropping the requirement of radial symmetry here, we will assume that 
$Ω\subset ℝ^n$ is a bounded bounded with smooth boundary, and that with some $\beta\in (0,1)$, $\vst$ belongs to
$C^{2+β}(\Ombar)$ and is nonnegative on $\pO$.\abs
A first essential observation is that we can eliminate $u$ from the stationary system and subsequently deal with a single equation only. Especially regarding the question of uniqueness, the appearance of a constant parameter $\alpha$ in said equation could turn out to be unfortunate. However, we will later show that $\alpha$ is in one-to-one correspondence with $\io u$. (An alternative would be to compute $α=\frac{m}{\io e^v}$ and work with the nonlocal equation for $v$, see \cite{lee_wang_yang}, where this approach was used for the special case of constant boundary value.) 
\begin{lem}\label{lem:stationary-u}
 Let $v\in C^2(\Ombar)$. If $u\in C^2(\Ombar)$ satisfies 
 \begin{equation}\label{u-eq}
  \begin{cases}
   0=Δu-∇\cdot(u∇v)&\text{in } Ω\\
   \frac{∂u}{∂ν} = u \frac{∂v}{∂ν}&\text{ on }∂Ω
  \end{cases}
 \end{equation}
then there is $α\inℝ$ such that 
\begin{equation}\label{u-from-v}
 u=αe^v.
\end{equation}
If, on the other hand, \eqref{u-from-v} holds for some $v\in C^2(\Ombar)$ and $α\inℝ$, then \eqref{u-eq}. Furthermore, the signs of $\io u$ and $α$ coincide.
\end{lem}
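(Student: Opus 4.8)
The plan is to treat the equation for $u$ as a linear elliptic equation in divergence form with a known drift. Rewriting the PDE in \eqref{u-eq} as $\nabla\cdot(e^v\nabla(e^{-v}u))=0$ in $\Omega$, the natural strategy is to set $w:=e^{-v}u\in C^2(\Ombar)$ and observe that the no-flux condition $\frac{\partial u}{\partial\nu}=u\frac{\partial v}{\partial\nu}$ is precisely $e^v\frac{\partial w}{\partial\nu}=0$ on $\partial\Omega$, i.e.\ $\frac{\partial w}{\partial\nu}=0$. Thus $w$ is a classical solution of the homogeneous conormal problem $\nabla\cdot(e^v\nabla w)=0$ in $\Omega$, $\frac{\partial w}{\partial\nu}=0$ on $\partial\Omega$. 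Multiplying by $w$ and integrating by parts (the boundary term vanishing by the Neumann condition) gives $\io e^v|\nabla w|^2=0$, and since $e^v>0$ on the connected set $\Ombar$ this forces $\nabla w\equiv 0$, hence $w\equiv\alpha$ for some constant $\alpha\in\R$, which is \eqref{u-from-v}. (If one prefers to avoid the integration by parts at the $C^2$ level, the strong maximum principle applied to the uniformly elliptic operator $L\phi:=\nabla\cdot(e^v\nabla\phi)$, together with the Hopf boundary point lemma to rule out an interior extremum being attained only on $\partial\Omega$, yields the same conclusion that $w$ is constant.)

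For the converse, if $u=\alpha e^v$ with $v\in C^2(\Ombar)$ and $\alpha\in\R$, one simply computes $\nabla\cdot(u\nabla v)=\nabla\cdot(\alpha e^v\nabla v)=\alpha\nabla\cdot(\nabla e^v)=\alpha\Delta e^v=\Delta u$, so the first line of \eqref{u-eq} holds; and $\frac{\partial u}{\partial\nu}=\alpha e^v\frac{\partial v}{\partial\nu}=u\frac{\partial v}{\partial\nu}$ on $\partial\Omega$, giving the second line. This direction is an entirely routine verification.

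Finally, for the sign statement: from $u=\alpha e^v$ we get $\io u=\alpha\io e^v$, and since $\io e^v>0$, the numbers $\io u$ and $\alpha$ have the same sign (both zero, both positive, or both negative) simultaneously. I do not anticipate a serious obstacle here; the only point requiring a little care is justifying that $w$ is genuinely constant rather than merely having zero gradient a.e., but this is immediate from $w\in C^1(\Ombar)$ and connectedness of $\Omega$. The mild subtlety worth stating explicitly is that this lemma is purely about the first equation and its boundary condition, so no use of the second equation $0=\Delta v-uv$ or of the Dirichlet condition $v=\vst$ is needed at this stage.
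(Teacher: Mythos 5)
Your proposal is correct: the substitution $w=e^{-v}u$ turns \eqref{u-eq} into the conormal problem $\nabla\cdot(e^v\nabla w)=0$, $\partial_\nu w=0$, and testing with $w$ forces $\nabla w\equiv 0$ on the connected set $\Omega$, while the converse and the sign statement are the routine computations you give. This is essentially the same (standard) argument that the paper invokes by citing \cite[Lemma~4.1]{braukhoff_lankeit} for the first implication and by the chain rule plus integration of \eqref{u-from-v} for the remaining two claims; you have merely written out in full what the paper outsources to the reference.
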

\begin{proof}
 While the second part of the statement directly follows from the chain rule and the last part is obvious after integration of \eqref{u-from-v}, the first is identical to \cite[Lemma~4.1]{braukhoff_lankeit}.
\end{proof}

We now take care of solvability and some a priori estimates for solutions of the second equation of \eqref{stationary} if we insert \eqref{u-from-v}, firstly in a related linear problem. 

\begin{lem}\label{lem:v-linear-regularity}
 Let $α\ge 0$, $\vst\in C^{2+β}(\Ombar)$, $\vst\ge 0$ and $v\in C^{β}(\Ombar)$. Then 
 \begin{equation}\label{v-linear}
  \begin{cases}
  Δ\tilde{v} = α\tilde{v} e^{v} &\text{in } Ω\\
  \tilde{v}=\vst&\text{on } ∂Ω
  \end{cases}
 \end{equation}
 has a unique solution $\tilde{v}\in C^{2+β}(\Ombar)$. This solution satisfies 
 \begin{equation}\label{v-linear-uniformbound}
  0\le \tilde{v} \le γ:= \max_{∂Ω} \vst.
 \end{equation}
Moreover, for every $α$ and $\vst$ as above, there is $C>0$ such that for every $v\in C^{β}(\Ombar)$ with $0\le v\le γ$ the corresponding solution $\tilde{v}$ satisfies 
\begin{equation}\label{v-holderbound}
\norm[C^{β}(\Ombar)]{v} \le C.
\end{equation}
\end{lem}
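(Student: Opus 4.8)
The plan is to prove Lemma~\ref{lem:v-linear-regularity} by treating \eqref{v-linear} as a genuinely linear elliptic problem in the unknown $\tilde v$, with coefficient $\alpha e^v \ge 0$ which, since $v\in C^\beta(\Ombar)$, lies in $C^\beta(\Ombar)$. First I would subtract off the boundary data: writing $\tilde v = w + V$ where $V\in C^{2+\beta}(\Ombar)$ is any fixed extension of $\vst$ (e.g.\ the harmonic extension, or simply $V=\vst$ itself since $\vst\in C^{2+\beta}(\Ombar)$), the problem becomes $\Delta w - \alpha e^v w = \alpha e^v V - \Delta V$ in $\Omega$, $w=0$ on $\partial\Omega$, which is a linear Dirichlet problem with nonnegative zeroth-order coefficient and $C^\beta$ right-hand side. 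Existence and uniqueness of $w\in C^{2+\beta}(\Ombar)$, and hence of $\tilde v\in C^{2+\beta}(\Ombar)$, then follow from standard Schauder theory together with the fact that the operator $-\Delta + \alpha e^v$ has trivial kernel (by the maximum principle, since $\alpha e^v\ge 0$), so that the Fredholm alternative applies.

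Next I would establish the pointwise bounds \eqref{v-linear-uniformbound} by the maximum principle. For the lower bound: $\tilde v$ satisfies $\Delta\tilde v = \alpha e^v\,\tilde v$ with $\alpha e^v\ge 0$ and $\tilde v=\vst\ge 0$ on $\partial\Omega$; if $\tilde v$ attained a negative minimum at an interior point $x_0$, then $\Delta\tilde v(x_0)\ge 0$ while $\alpha e^{v(x_0)}\tilde v(x_0)\le 0$, and the strong maximum principle forces $\tilde v\equiv\tilde v(x_0)<0$, contradicting the boundary condition; hence $\tilde v\ge 0$. For the upper bound: the constant $\gamma=\max_{\partial\Omega}\vst$ is a supersolution, since $\Delta\gamma = 0 \le \alpha e^v\gamma$ (using $\gamma\ge 0$) and $\gamma\ge\vst$ on $\partial\Omega$; comparison then gives $\tilde v\le\gamma$.

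Finally, for the uniform Hölder bound \eqref{v-holderbound}, I would invoke elliptic $L^p$ or Hölder estimates with the observation that, once $0\le v\le\gamma$, the coefficient $\alpha e^v$ is bounded in $L^\infty(\Omega)$ by $\alpha e^\gamma$ uniformly in $v$, and the forcing term $\alpha e^v\,\tilde v$ (when written in the form above with $V=\vst$) is bounded in $L^\infty(\Omega)$ by $\alpha e^\gamma\gamma + \|\Delta\vst\|_{L^\infty(\Omega)}$, again uniformly. Hence $w$ solves a uniformly elliptic equation with $L^\infty$ data and zero boundary values, so Calderón--Zygmund estimates give a bound for $w$ in $W^{2,p}(\Omega)$ for any $p<\infty$ depending only on $\alpha,\vst,\gamma,\Omega$, and Sobolev embedding $W^{2,p}(\Omega)\hookrightarrow C^\beta(\Ombar)$ (for $p$ large) yields \eqref{v-holderbound}; adding back $\|\vst\|_{C^\beta(\Ombar)}$ completes the estimate.

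The routine part is the Schauder/$L^p$ machinery; the only point requiring a little care is making sure the constant in \eqref{v-holderbound} genuinely depends only on $\alpha$ and $\vst$ (and $\Omega$) and not on the particular $v$ — but this is immediate once the coefficient and forcing bounds above are seen to be uniform over all admissible $v$. I do not anticipate a serious obstacle here; the lemma is essentially a packaging of standard linear elliptic theory, and the nonnegativity $\alpha\ge 0$, $\vst\ge 0$ is exactly what makes the maximum-principle arguments and the solvability go through cleanly.
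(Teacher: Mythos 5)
Your proposal is correct and takes essentially the same route as the paper: unique solvability from linear Schauder theory (the paper simply cites \cite[Thm.~6.14]{GT}, without needing your subtraction of a boundary extension), the maximum principle for the two pointwise bounds in \eqref{v-linear-uniformbound} (the paper gets the upper bound by noting $\tilde v\ge 0$ makes $\tilde v$ subharmonic, where you compare with the constant supersolution $\gamma$), and elliptic regularity applied to an equation whose right-hand side is bounded uniformly over admissible $v$ for \eqref{v-holderbound} (the paper invokes the global H\"older estimate \cite[Thm.~8.29]{GT}, you use Calder\'on--Zygmund $W^{2,p}$ bounds plus Sobolev embedding). These are interchangeable implementations of the same argument, so there is nothing to fix.
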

\begin{proof}
 Unique solvability results from \cite[Thm.~6.14]{GT}. If $α=0$, then \eqref{v-linear-uniformbound} follows from the classical maximum principle (\cite[Thm.~3.1]{GT}) for the harmonic function $\tilde{v}$ and the assumptions on $\vst$. For $α>0$, we note that if $\tilde{v}$ is minimal at some $x_0\in Ω$, then $0\le Δ\tilde{v}(x_0)=α\tilde{v}(x_0)e^{v(x_0)}$, due to the positivity of $αe^{v(x_0)}$ entails that $\tilde{v}\ge \min\tilde{v}=\tilde{v}(x_0)\ge 0$. If $\tilde{v}$, however, is minimal at some $x_0\in ∂Ω$, then, again, $\tilde{v}\ge \tilde{v}(x_0)=\vst(x_0)\ge 0$. With nonnegativity of $\tilde{v}$ thus ensured and hence $Δ\tilde{v}\ge 0$ in $Ω$, the second part of \eqref{v-linear-uniformbound} follows from the maximum principle (\cite[Cor.~3.2]{GT}). The Hölder bound in \eqref{v-holderbound} thereby can be concluded from the boundedness of the right-hand side in \eqref{v-linear} and elliptic regularity \cite[Thm.~8.29]{GT}.
\end{proof}

With this, solving the second equation of \eqref{stationary} with \eqref{u-from-v} is possible: 
\begin{lem}\label{lem:stationary-v}
 For every $α\ge 0$, the boundary value problem 
 \begin{equation}\label{stationary-v}
  \begin{cases}
   Δv = αve^v& \text{ in } Ω\\
   v=\vst&\text{ on } ∂Ω
  \end{cases}
 \end{equation}
has a solution $v\in C^2(\Ombar)$, and $v$ satisfies \eqref{v-linear-uniformbound} and \eqref{v-holderbound}.
\end{lem}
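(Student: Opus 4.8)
The plan is to solve \eqref{stationary-v} by a fixed point argument based on the linear theory already established in Lemma~\ref{lem:v-linear-regularity}. First I would define the set $S := \{ v \in C^{\beta}(\Ombar) : 0 \le v \le \gamma \}$, which is a closed, bounded, convex subset of the Banach space $C^{\beta}(\Ombar)$, and introduce the map $\Phi : S \to C^{\beta}(\Ombar)$ that sends $v$ to the unique solution $\tilde v \in C^{2+\beta}(\Ombar)$ of the linear problem \eqref{v-linear}; this is well-defined precisely by Lemma~\ref{lem:v-linear-regularity}. A fixed point of $\Phi$ is exactly a solution of \eqref{stationary-v}, and it automatically inherits \eqref{v-linear-uniformbound} and \eqref{v-holderbound} since those are the bounds on $\tilde v$ provided by that lemma.

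Next I would check the hypotheses of the Schauder fixed point theorem. The inclusion $\Phi(S) \subset S$ follows from the pointwise bound $0 \le \tilde v \le \gamma$ in \eqref{v-linear-uniformbound}, valid for every $v \in S$ since such $v$ is in particular in $C^{\beta}(\Ombar)$. For compactness, note that \eqref{v-holderbound} (or rather the $C^{2+\beta}$ bound behind it, obtained from elliptic regularity since the right-hand side $\alpha v e^{v}$ is bounded in $C^{\beta}$ uniformly over $v \in S$) shows that $\Phi(S)$ is bounded in $C^{2+\beta}(\Ombar)$, hence precompact in $C^{\beta}(\Ombar)$ by the compact embedding $C^{2+\beta}(\Ombar) \hookrightarrow C^{\beta}(\Ombar)$. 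So it remains only to verify continuity of $\Phi$ on $S$: if $v_k \to v$ in $C^{\beta}(\Ombar)$ with $v_k, v \in S$, then $\alpha v_k e^{v_k} \to \alpha v e^{v}$ in $C^{\beta}(\Ombar)$ (or at least in a weaker norm sufficient for elliptic estimates), and combining the uniform $C^{2+\beta}$ bound with uniqueness for the limit problem forces $\Phi(v_k) \to \Phi(v)$ in $C^{\beta}(\Ombar)$.

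With these three properties in hand, Schauder's fixed point theorem yields a fixed point $v \in S$, which then lies in $C^{2+\beta}(\Ombar) \subset C^2(\Ombar)$ by the regularity statement of Lemma~\ref{lem:v-linear-regularity} applied once more to $\tilde v = v$, and satisfies \eqref{v-linear-uniformbound} and \eqref{v-holderbound} as noted. I expect the main obstacle to be purely technical: making the continuity argument for $\Phi$ clean, in particular handling the composition $v \mapsto v e^{v}$ in Hölder norms and invoking the appropriate continuous dependence estimate for the linear Dirichlet problem (along the lines of \cite[Thm.~6.14]{GT}) so that convergence of the data transfers to convergence of the solutions; none of this is deep, but it is where care is needed. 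An alternative, avoiding the continuity verification altogether, would be a monotone-iteration scheme using $0$ and $\gamma$ as sub- and supersolution of \eqref{stationary-v}, which is equally valid here since the nonlinearity $v \mapsto \alpha v e^{v}$ is nondecreasing on $[0,\gamma]$.
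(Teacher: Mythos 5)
Your proposal is correct and follows essentially the same route as the paper: both solve \eqref{stationary-v} via Schauder's fixed point theorem applied to the solution map of the linearized problem \eqref{v-linear}, using \eqref{v-linear-uniformbound} for invariance and the elliptic $C^{2+\beta}$ estimate for compactness. The only cosmetic difference is that the paper additionally builds the bound $\|w\|_{C^{\beta}(\Ombar)}\le c_1$ from \eqref{v-holderbound} into its invariant set (so that it is genuinely bounded in $C^{\beta}(\Ombar)$, which your set $S$ is not, though this is immaterial for the version of Schauder's theorem requiring only a closed convex set and a compact continuous self-map).
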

\begin{proof}
 With $c_1>0$ taken from \eqref{v-holderbound}, we introduce  
 \[
  X=\{w\in C^{β}(\Ombar)\mid 0\le w\le γ, \norm[C^{β}(\Ombar)]{w}\le c_1\}\subset C^{β}(\Ombar)
 \]
and for $v\in X$ let $Φ(v)=\tilde{v}$ denote the solution $\tilde{v}\in C^{2+β}(\Ombar)$ of \eqref{v-linear}. According to elliptic regularity theory (\cite[Thm.~6.6]{GT}) in conjunction with \eqref{v-holderbound}, $Φ(X)$ is bounded in $C^{2+β}(\Ombar)$, thus relatively compact in $C^{β}(\Ombar)$ and, again by Lemma~\ref{lem:v-linear-regularity}, $Φ(X)\subseteq X$. As $Φ\colon X\to X$ moreover is continuous, 
Schauder's theorem asserts the existence of a fixed point $v=Φ(v)$.
\end{proof}

\begin{lem}\label{lem:v-monotone-wrt-alpha}
Let $v_1$ and $v_2$ be two solutions of \eqref{stationary-v} with $α_1\ge 0$ and $α_2\ge 0$, respectively.
 If $α_1\ge α_2$, then $v_1\le v_2$. 
\end{lem}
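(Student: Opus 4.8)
The plan is to derive the comparison $v_1 \le v_2$ from a linear elliptic maximum principle applied to the difference $w := v_1 - v_2$. First I would record, via Lemma~\ref{lem:stationary-v} and \eqref{v-linear-uniformbound}, that $0 \le v_i \le \gamma$ throughout $\bom$ for both $i\in\{1,2\}$, and note that $w$ vanishes on $\pO$ because $v_1 = v_2 = \vst$ there. Subtracting the two instances of \eqref{stationary-v} then gives $\Delta w = \alpha_1 g(v_1) - \alpha_2 g(v_2)$ in $\Omega$, where I abbreviate $g(t) := t e^t$.

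The key step is to rewrite the right-hand side in a form that exhibits $w$ explicitly together with a favorably signed remainder. Using the splitting
\[
	\alpha_1 g(v_1) - \alpha_2 g(v_2) = \alpha_1\big(g(v_1)-g(v_2)\big) + (\alpha_1-\alpha_2)\, g(v_2)
\]
and the fact that $g$ is smooth with $g'(t) = (1+t)e^t \ge 1$ on $[0,\infty)$, I would set $c(x) := \int_0^1 g'\big(v_2(x) + s(v_1(x)-v_2(x))\big)\, ds$, which is continuous on $\bom$ with $c \ge 1$ and satisfies $g(v_1)-g(v_2) = c(x)\,w$. This turns the equation into
\[
	\Delta w - \alpha_1 c(x)\, w = (\alpha_1-\alpha_2)\, g(v_2) \qquad \mbox{in } \Omega .
\]
Since $\alpha_1 \ge \alpha_2 \ge 0$ and $g(v_2) \ge 0$, the right-hand side is nonnegative, and since $\alpha_1 \ge 0$ and $c \ge 1 > 0$, the zeroth-order coefficient $-\alpha_1 c(x)$ is nonpositive. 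The weak maximum principle for elliptic operators with nonpositive zeroth-order coefficient (\cite[Cor.~3.2]{GT}) then yields $\max_{\bom} w \le \max_{\pO} w^+ = 0$, that is, $v_1 \le v_2$ in $\bom$.

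I do not expect a genuine obstacle here; the only point needing a little attention is to keep the coefficient of $w$ on the left-hand side with the correct sign so that the maximum principle is applicable, which is precisely what the monotonicity bound $g' \ge 1 > 0$ (valid thanks to the a priori bound $v_i \ge 0$ from \eqref{v-linear-uniformbound}) secures. The degenerate case $\alpha_1 = \alpha_2 = 0$ is covered as well, since then $w$ is harmonic and vanishes on $\pO$, forcing $w \equiv 0$.
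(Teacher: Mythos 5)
Your argument is correct, but it takes a slightly different route than the paper. You work on all of $\Omega$ and linearize the difference of the nonlinearities via the mean-value identity $g(v_1)-g(v_2)=c(x)\,w$ with $c(x)=\int_0^1 g'\big(v_2+s(v_1-v_2)\big)\,ds\ge 0$, so that $w$ satisfies $\Delta w-\alpha_1 c(x)\,w=(\alpha_1-\alpha_2)g(v_2)\ge 0$ and the maximum principle for operators with nonpositive zeroth-order coefficient (\cite[Cor.~3.2]{GT}) applies. The paper instead restricts attention to the set $\Omega_1=\{v_1>v_2\}$, where the monotonicity of $t\mapsto te^t$ on $[0,\infty)$ together with $\alpha_1\ge\alpha_2$ gives $\Delta(v_1-v_2)\ge 0$ directly, so that the plain maximum principle for subharmonic functions with zero boundary values on $\partial\Omega_1$ forces $\Omega_1=\emptyset$; this avoids introducing any zeroth-order term at the cost of the localization step. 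Both routes hinge on the same ingredients (nonnegativity of $v_1,v_2$ and the sign $\alpha_1\ge\alpha_2\ge 0$), and yours is arguably a bit more systematic since the linearized operator you build reappears naturally in the differentiability analysis of Lemma~\ref{lem:vprime}. One small citation point: nonnegativity of \emph{arbitrary} solutions $v_i$ of \eqref{stationary-v} is best justified, as the paper does, by viewing $v_i$ as the solution of the linear problem \eqref{v-linear} with $v=v_i$ and invoking \eqref{v-linear-uniformbound} from Lemma~\ref{lem:v-linear-regularity}; Lemma~\ref{lem:stationary-v} by itself only asserts the existence of \emph{some} solution with that bound.
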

\begin{proof}
  Both $v_1$ and $v_2$ -- being solutions to \eqref{v-linear} with $v=v_1$ or $v=v_2$, respectively -- are nonnegative. We let $Ω_1:=\{x\inΩ\mid v_1(x)>v_2(x)\}$ and $\bar{v}=v_1-v_2$. As $x\mapsto xe^x$ is monotone for $x\in[0,\infty)$, 
 \begin{align*}
  Δ\bar{v} &= α_1v_1e^{v_1}-α_2v_2e^{v_2}\ge α_2(v_1e^{v_1}-v_2e^{v_2}) \ge 0 \qquad &&\text{in }Ω_1\\
  \bar{v}&=0\qquad&&\text{on }∂Ω_1.
 \end{align*}
By the maximum principle therefore $\max_{\Ombar}\bar{v} = \max_{∂Ω}\bar{v} = 0$ and thus $Ω_1=\emptyset$, so that $v_1\le v_2$ in $Ω\setminus Ω_1 = Ω$. 
\end{proof}
As a particular consequence of Lemma~\ref{lem:v-monotone-wrt-alpha}, for every $α\ge 0$ the solution to \eqref{stationary-v} is unique. From now on, we will denote it by $v_{α}$.

That, according to Lemma~\ref{lem:v-monotone-wrt-alpha}, $v_{α}$ is decreasing with respect to $α$ is of little help with regard to the monotonicity of $αe^{v_{α}}$ (or rather $α\io e^{v_{α}}$). For further information we study the derivative of $v_{α}$ w.r.t. $α$.

\begin{lem}\label{lem:vprime}
 For every $α_1> 0$, the function 
 \begin{equation}\label{vprime}
  v'_{α_1} = \frac{d}{dα} v_{α} \Big|_{α=α_1} = \lim_{α_2\to α_1} \frac{v_{α_2}-v_{α_1}}{α_2-α_1}
 \end{equation}
exists (with the limit taken in $C^2(\Ombar)$) and satisfies 
\begin{equation}\label{vprime-bvp}
 \begin{cases}
  Δv'_{α_1} = v_{α_1} + (α_1 e^{v_{α_1}}+α_1v_{α_1}e^{v_{α_1}})v'_{α_1}&\text{ in }Ω\\
  v'_{α_1}=0&\text{on } ∂Ω.
 \end{cases}
\end{equation}
\end{lem}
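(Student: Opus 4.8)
The plan is to derive both the existence of the limit in \eqref{vprime} and the identification \eqref{vprime-bvp} from the implicit function theorem, applied to the map encoding \eqref{stationary-v}. Writing $v=\vst+w$ with $w$ ranging over the Banach space $E:=\{\phi\in C^{2+\beta}(\Ombar)\mid \phi=0\text{ on }\partial\Omega\}$, I would introduce
\[
 G\colon (0,\infty)\times E\to C^{\beta}(\Ombar),\qquad G(\alpha,w):=\Delta(\vst+w)-\alpha\,(\vst+w)\,e^{\vst+w}.
\]
Since $C^{2+\beta}(\Ombar)$ is a Banach algebra and $s\mapsto e^{s}$ is entire, $G$ is smooth in $(\alpha,w)$; by Lemma~\ref{lem:stationary-v} and the uniqueness of $v_\alpha$ noted after Lemma~\ref{lem:v-monotone-wrt-alpha}, one has $G(\alpha,w_\alpha)=0$ for $w_\alpha:=v_\alpha-\vst$, and conversely every zero of $G(\alpha,\cdot)$ in $E$ equals $w_\alpha$. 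The partial differential at $(\alpha_1,w_{\alpha_1})$ is $D_wG(\alpha_1,w_{\alpha_1})\phi=\Delta\phi-c_{\alpha_1}\phi$ with $c_{\alpha_1}:=\alpha_1(1+v_{\alpha_1})e^{v_{\alpha_1}}$.

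The only substantial point — and the step I expect to require the most care — is that $D_wG(\alpha_1,w_{\alpha_1})$ is an isomorphism $E\to C^{\beta}(\Ombar)$. Here I would use that $c_{\alpha_1}\ge 0$ on $\Omega$, which is where $\alpha_1>0$ and the nonnegativity $v_{\alpha_1}\ge 0$ from \eqref{v-linear-uniformbound} enter, so that the homogeneous problem $\Delta\phi=c_{\alpha_1}\phi$, $\phi|_{\partial\Omega}=0$, has only the trivial solution by the weak maximum principle (\cite[Thm.~3.1]{GT} applied to $\pm\phi$), and that, $c_{\alpha_1}$ being nonnegative and Hölder continuous, the inhomogeneous Dirichlet problem $\Delta\phi-c_{\alpha_1}\phi=f$, $\phi|_{\partial\Omega}=0$, is uniquely solvable in $C^{2+\beta}(\Ombar)$ with a bound $\|\phi\|_{C^{2+\beta}(\Ombar)}\le C\|f\|_{C^{\beta}(\Ombar)}$ — precisely the ingredient (\cite[Thm.~6.14]{GT}) already invoked in the proof of Lemma~\ref{lem:v-linear-regularity}. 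Thus $D_wG(\alpha_1,w_{\alpha_1})$ is bounded and bijective, hence an isomorphism by the bounded inverse theorem.

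Given this, the implicit function theorem supplies a neighbourhood of $\alpha_1$ and a $C^1$ curve $\alpha\mapsto w_\alpha\in E$ with $G(\alpha,w_\alpha)=0$; by the uniqueness noted above this curve is $\alpha\mapsto v_\alpha-\vst$, so $\alpha\mapsto v_\alpha$ is $C^1$ into $C^{2+\beta}(\Ombar)\hookrightarrow C^2(\Ombar)$ near $\alpha_1$, which establishes the existence of the limit in \eqref{vprime} in $C^2(\Ombar)$ (in fact in $C^{2+\beta}(\Ombar)$). Differentiating the identity $G(\alpha,v_\alpha-\vst)\equiv 0$ at $\alpha=\alpha_1$ gives $\partial_\alpha G(\alpha_1,w_{\alpha_1})+D_wG(\alpha_1,w_{\alpha_1})\,v'_{\alpha_1}=0$, and rearranging this relation yields precisely the interior equation in \eqref{vprime-bvp}, while $v'_{\alpha_1}|_{\partial\Omega}=0$ is inherited from the membership $v'_{\alpha_1}\in E$. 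Everything beyond the isomorphism property of $D_wG$ is essentially bookkeeping on top of Lemmas~\ref{lem:v-linear-regularity}--\ref{lem:v-monotone-wrt-alpha}.

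If one prefers to avoid the implicit function theorem, the difference quotients $q_{\alpha_1,\alpha_2}:=(v_{\alpha_2}-v_{\alpha_1})/(\alpha_2-\alpha_1)$ can be treated directly: with $g(s):=se^{s}$ they solve $\Delta q_{\alpha_1,\alpha_2}=g(v_{\alpha_2})+\alpha_1\big(\int_0^1 g'\big(v_{\alpha_1}+t(v_{\alpha_2}-v_{\alpha_1})\big)\,dt\big)\,q_{\alpha_1,\alpha_2}$ with zero boundary values, the coefficient of $q_{\alpha_1,\alpha_2}$ being nonnegative. Uniform-in-$\alpha$ (locally) $C^{2+\beta}$ bounds for $v_\alpha$ from \eqref{v-holderbound} and elliptic bootstrapping, the continuity $v_{\alpha_2}\to v_{\alpha_1}$ in $C^2(\Ombar)$ obtained from those bounds via compactness together with uniqueness, and the same linear unique-solvability fact as above then force $q_{\alpha_1,\alpha_2}$ to converge in $C^2(\Ombar)$, as $\alpha_2\to\alpha_1$, to the solution of \eqref{vprime-bvp}.
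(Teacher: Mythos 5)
Your primary argument is correct, but it takes a genuinely different route from the paper. You derive differentiability of $\alpha\mapsto v_\alpha$ from the implicit function theorem applied to $G(\alpha,w)=\Delta(\vst+w)-\alpha(\vst+w)e^{\vst+w}$ on $(0,\infty)\times\{\phi\in C^{2+\beta}(\Ombar)\mid \phi=0\ \text{on}\ \pO\}$, the one substantial point being that the linearization $\phi\mapsto \Delta\phi-\alpha_1(1+v_{\alpha_1})e^{v_{\alpha_1}}\phi$ is an isomorphism onto $C^\beta(\Ombar)$; this is exactly where the sign information $v_{\alpha_1}\ge 0$ from \eqref{v-linear-uniformbound} enters, via \cite[Thm.~6.14]{GT} (for injectivity the relevant maximum principle is the one for operators with nonpositive zeroth-order coefficient, e.g.\ \cite[Thm.~3.3]{GT}, rather than Thm.~3.1 --- a cosmetic citation point). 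The paper instead argues exactly along the lines of your fallback paragraph: the difference quotients $w_{\alpha_2,\alpha_1}$ solve a linear Dirichlet problem with H\"older-bounded data, Schauder estimates give bounds in $C^{2+\beta}(\Ombar)$ uniform for $\alpha_1,\alpha_2\in[0,A]$, Arzel\`a--Ascoli produces subsequential $C^2$ limits, and uniqueness for the limit problem \eqref{w-limit} (established there by an energy identity exploiting $f_{2,\alpha,\alpha}\ge 0$, where you would invoke the maximum principle) upgrades this to convergence of the whole family. Your route buys local smooth (indeed analytic) dependence of $v_\alpha$ on $\alpha$ in $C^{2+\beta}(\Ombar)$ without any compactness argument, at the price of the functional-analytic setup; the paper's route is more elementary and reuses only tools already in play in Lemmas~\ref{lem:v-linear-regularity}--\ref{lem:v-monotone-wrt-alpha}. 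One caveat: differentiating $G$ gives $\Delta v'_{\alpha_1}=v_{\alpha_1}e^{v_{\alpha_1}}+\alpha_1(1+v_{\alpha_1})e^{v_{\alpha_1}}v'_{\alpha_1}$, which is the correct linearization and is what the paper's own proof (through $f_{1,\alpha}=v_\alpha e^{v_\alpha}$) and the proof of Lemma~\ref{lem:vprime-estimates} actually use; the term ``$v_{\alpha_1}$'' printed in \eqref{vprime-bvp} is evidently missing a factor $e^{v_{\alpha_1}}$, so you should not try to ``rearrange'' your identity to match the printed statement literally.
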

\begin{proof}
 For $α_1\ge 0$, $α_2\ge 0$, we let $w_{α_2,α_1}=\frac{v_{α_2}-v_{α_1}}{α_2-α_1}$ and note that $w=w_{α_2,α_1}$ solves 
 \begin{equation}\label{w}
 \begin{cases}
  Δw = f_{1,α_2}+f_{2,α_2,α_1} w &\text{ in } Ω\\
  w=0&\text{ on } ∂Ω
  \end{cases}
 \end{equation}
with $f_{1,α_2}=v_{α_2}e^{v_{α_2}}$, $f_{2,α_2,α_1}=α_1e^{v_{α_1}}+α_1v_{α_1}e^{v_{α_1}}F(v_{α_2}-v_{α_1})$, $F(z)=\frac{e^z-1}z$, $z\neq 0$, $F(0)=1$. 
From the Hölder bounds on $f_{1,α_2}$ and $f_{2,α_2,α_1}$ resulting from Lemma~\ref{lem:stationary-v} and \eqref{v-holderbound}, together with elliptic regularity theory (in the shape of \cite[Thm.~6.6]{GT}), we conclude that for every $A>0$ there is $C>0$ such that 
\[
 \norm[C^{2+β}(\Ombar)]{w_{α_2,α_1}}\le C \qquad \text{for all } α_1,α_2\in[0,A].
\]
If $(α_n)_{n\inℕ}\subset [0,∞)$ is a sequence with limit $α\in[0,∞)$, by Arzelà-Ascoli's theorem for every subsequence $(α_{n_k})_{k\inℕ}$ there are $w\in C^2(\Ombar)$ and $(α_{n_{k_l}})_{l\inℕ}$ such that $w_{α_{n_{k_l}}}\to w$ in $C^2(\Ombar)$ as $l\to\infty$ and, by \eqref{w}, $w$ satisfies 
\begin{equation}\label{w-limit}
 \begin{cases}
Δw= f_{1,α}+f_{2,α,α} w&\text{in } Ω\\
w=0&\text{on } ∂Ω.
 \end{cases}
\end{equation}
The solution to \eqref{w-limit} is unique (since $-\io |∇(w_1-w_2)|^2 =\io f_{2,α,α} (w_1-w_2)^2 \ge 0$ whenever $w_1$ and $w_2$ solve \eqref{w-limit}), hence actually $w_{α_n,α}\to w=v'$ in $C^2(\Ombar)$ as $n\to \infty$ and both existence of the limit in \eqref{vprime} and \eqref{vprime-bvp} follow.
\end{proof}

\begin{lem}\label{lem:vprime-estimates}
 For every $α>0$, 
 \[
  0 \ge v_{α}'>-\frac1{α}\qquad \text{in }Ω.
 \]
\end{lem}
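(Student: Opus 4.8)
The plan is to read the boundary value problem \eqref{vprime-bvp} for $v_\alpha'$ from Lemma~\ref{lem:vprime} as a linear Dirichlet problem $\Delta w - c(x)w = g(x)$ in $\Omega$, $w = 0$ on $\partial\Omega$, with $w := v_\alpha'$, source $g(x) := v_\alpha(x)$ and zero-order coefficient $c(x) := \alpha e^{v_\alpha}(1+v_\alpha) = \alpha e^{v_\alpha} + \alpha v_\alpha e^{v_\alpha}$. Since $v_\alpha \ge 0$ by Lemma~\ref{lem:stationary-v}, both $g \ge 0$ and $c \ge 0$, so the maximum principle in the form already used in the proofs of Lemma~\ref{lem:v-linear-regularity} and Lemma~\ref{lem:v-monotone-wrt-alpha} (i.e.\ \cite[Cor.~3.2]{GT}) is available for the operator $\Delta - c$.

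For the upper estimate $v_\alpha' \le 0$ I would simply observe that $\Delta v_\alpha' - c v_\alpha' = v_\alpha \ge 0$ in $\Omega$ together with $v_\alpha' = 0$ on $\partial\Omega$ forces $\sup_\Omega v_\alpha' \le \sup_{\partial\Omega}(v_\alpha')^+ = 0$. Equality can genuinely occur (for instance if $\vst \equiv 0$, whence $v_\alpha \equiv 0$ and then $v_\alpha' \equiv 0$), which is why this inequality is not strict.

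For the lower estimate the idea is to compare $v_\alpha'$ with the constant $-\tfrac1\alpha$ dictated by the boundary values. Putting $z := v_\alpha' + \tfrac1\alpha$, so that $z = \tfrac1\alpha > 0$ on $\partial\Omega$, one computes from \eqref{vprime-bvp}, using $c(x)/\alpha = e^{v_\alpha}(1+v_\alpha)$,
\[
 \Delta z - c z \;=\; \Delta v_\alpha' - c v_\alpha' - \frac{c(x)}{\alpha} \;=\; v_\alpha - e^{v_\alpha}(1+v_\alpha) \qquad \text{in }\Omega .
\]
The one elementary ingredient is the inequality $s - e^{s}(1+s) < 0$, valid for every $s \ge 0$ since the left-hand side equals $-1$ at $s = 0$ and has derivative $1 - e^{s}(2+s) < 0$ on $[0,\infty)$. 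Hence $\Delta z - c z < 0$ in $\Omega$, and applying the maximum principle to $-z$ gives $z \ge 0$ in $\Omega$; moreover, if $z$ attained the value $0$ at some interior point $x_0$, the necessary condition $\Delta z(x_0) \ge 0$ at that minimum would contradict $\Delta z(x_0) - c(x_0) z(x_0) < 0$. Thus $z > 0$, i.e.\ $v_\alpha' > -\tfrac1\alpha$, throughout $\Omega$.

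I do not anticipate a genuine obstacle: the only points demanding care are the bookkeeping of signs when invoking the maximum/minimum principle for $\Delta - c$ with $c \ge 0$, and the scalar inequality $s - e^{s}(1+s) < 0$, which is precisely what makes the threshold $-\tfrac1\alpha$ sharp and prevents a weaker bound from appearing on the right-hand side of the equation for $z$.
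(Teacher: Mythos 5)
Your argument is correct. It differs from the paper's in two details, both concerning how the maximum principle is deployed on \eqref{vprime-bvp}. For the sign $v_\alpha'\le 0$, the paper does not invoke the maximum principle at all but simply passes to the limit in the monotonicity statement of Lemma~\ref{lem:v-monotone-wrt-alpha} (difference quotients of $\alpha\mapsto v_\alpha$ are nonpositive); your route through $\Delta v_\alpha'-c\,v_\alpha'=v_\alpha\ge 0$, $c\ge 0$, is an equally valid and self-contained alternative. For the lower bound, the paper evaluates \eqref{vprime-bvp} directly at an interior minimum point $x_0$ of $v_\alpha'$ (after disposing of the case $v_\alpha'\equiv 0$ when the minimum sits on the boundary), divides by $\alpha e^{v_\alpha(x_0)}$ and solves for $v_\alpha'(x_0)$, obtaining the slightly sharper intermediate estimate $v_\alpha'(x_0)\ge -\frac1\alpha\cdot\frac{v_\alpha(x_0)}{1+v_\alpha(x_0)}>-\frac1\alpha$; you instead build the barrier $z=v_\alpha'+\frac1\alpha$ and reduce strict positivity of $z$ to the scalar inequality $s<e^s(1+s)$ for $s\ge 0$, which encodes exactly the same mechanism ($\frac{s}{1+s}<1$) in comparison form. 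Both versions rest on $v_\alpha\ge 0$ from Lemma~\ref{lem:stationary-v} and on $v_\alpha'\in C^2(\Ombar)$ from Lemma~\ref{lem:vprime}, which you use correctly; what your variant buys is independence of the sign argument from Lemma~\ref{lem:v-monotone-wrt-alpha}, while the paper's is marginally shorter and records the quantitative refinement $v_\alpha'\ge-\frac1\alpha\frac{v_\alpha}{1+v_\alpha}$ at the minimum.
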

\begin{proof}
We abbreviate $v'=v'_{α}$ and $v=v_{α}$. 
 From Lemma~\ref{lem:v-monotone-wrt-alpha}, we obtain that $0\ge v'$. We let $x_0\in \Ombar$ be such that $v'(x_0)=\min_{\Ombar}v'$. Then $v'\equiv 0$ (which would finish the proof) or $x_0\inΩ$ and due to \eqref{vprime-bvp} 
 \[
  0\le Δv' = ve^v + (αe^v+αve^v)v' \qquad \text{at } x_0,
 \]
so that by positivity of $e^{v(x_0)}$
\[
 0\le v+α(1+v)v'\qquad\text{at } x_0, 
\]
which yields 
\[
 v'(x_0)\ge -\frac1{α} \cdot \frac{v(x_0)}{1+v(x_0)}>-\frac1{α}.\qedhere
\]
\end{proof}
Consequences of Lemma~\ref{lem:vprime-estimates} on the desired relation between $α$ and $m=\io u$ are as follows: 
\begin{lem}\label{lem:m}
 The map
 \begin{equation}\label{def:m}
  m:\begin{cases}
     [0,∞)\to [0,∞)\\α\mapsto \io αe^{v_α}
    \end{cases}
 \end{equation}
is bijective.
\end{lem}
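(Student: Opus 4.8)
The plan is to establish bijectivity of $m$ by proving injectivity and surjectivity separately, relying on the differentiability result from Lemma~\ref{lem:vprime} and the sign/size information on $v_\alpha'$ from Lemma~\ref{lem:vprime-estimates}. First I would observe that $m$ is continuous on $[0,\infty)$: this follows from the fact that $\alpha\mapsto v_\alpha$ is continuous in $C^2(\Ombar)$ (a byproduct of the compactness argument in Lemma~\ref{lem:vprime}, or directly from uniqueness plus the uniform Hölder/Schauder bounds), so $\alpha\mapsto\io\alpha e^{v_\alpha}$ is continuous. Moreover $m(0)=0$.

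For injectivity, the key computation is that $m$ is strictly increasing. For $\alpha>0$, differentiating under the integral sign (justified by Lemma~\ref{lem:vprime}, which gives $v_\alpha'$ as a $C^2(\Ombar)$-limit) yields
\[
 m'(\alpha)=\io e^{v_\alpha}+\io\alpha e^{v_\alpha}v_\alpha'=\io e^{v_\alpha}\bigl(1+\alpha v_\alpha'\bigr).
\]
By Lemma~\ref{lem:vprime-estimates} we have $\alpha v_\alpha'>-1$ pointwise in $\Omega$, so the integrand $e^{v_\alpha}(1+\alpha v_\alpha')$ is strictly positive, whence $m'(\alpha)>0$ for every $\alpha>0$. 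Together with continuity at $\alpha=0$ and $m(0)=0$, this shows $m$ is strictly increasing on $[0,\infty)$, hence injective.

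For surjectivity, since $m$ is continuous and strictly increasing with $m(0)=0$, it suffices to show $m(\alpha)\to\infty$ as $\alpha\to\infty$. Here the natural idea is to bound $v_\alpha$ from below by something that does not degenerate too fast. From Lemma~\ref{lem:vprime-estimates}, integrating $v_\alpha'>-\frac1\alpha\cdot\frac{v_\alpha}{1+v_\alpha}$ (or just $v_\alpha'\ge-\frac1\alpha$) in $\alpha$ one sees that $v_\alpha$ decreases slowly; more usefully, one can compare with a subsolution. Alternatively, and more robustly, I would use the mean value theorem: by Lemma~\ref{lem:vprime-estimates}, $v_{\alpha}-v_{1}=\int_{1}^{\alpha}v_\sigma'\,d\sigma\ge-\int_1^\alpha\frac{1}{\sigma}\,d\sigma=-\ln\alpha$ for $\alpha\ge1$, so $v_\alpha\ge v_1-\ln\alpha$; this is too weak since $e^{v_\alpha}\gtrsim\alpha^{-1}$ only gives $m(\alpha)\gtrsim\mathrm{const}$. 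To do better, note $\io\alpha e^{v_\alpha}\ge\alpha|\Omega|e^{\min_{\Ombar}v_\alpha}$ and use the sharper pointwise bound $v_\alpha'(x)\ge-\frac1\alpha\cdot\frac{v_\alpha(x)}{1+v_\alpha(x)}$ only where the minimum is attained — but since the minimizer moves with $\alpha$, a cleaner route is: at the interior minimum point (if $v_\alpha$ is not identically $\gamma$ on the boundary) the elliptic inequality already used in Lemma~\ref{lem:vprime-estimates}'s relatives gives $0\le\Delta v_\alpha=\alpha v_\alpha e^{v_\alpha}$, which only tells us $v_\alpha\ge0$. So the decisive estimate should instead be obtained by testing \eqref{stationary-v}: multiplying $\Delta v_\alpha=\alpha v_\alpha e^{v_\alpha}$ by a fixed positive test function (e.g. the solution $\psi$ of $-\Delta\psi=1$ in $\Omega$, $\psi=0$ on $\partial\Omega$) and integrating by parts gives $\io v_\alpha+(\text{boundary term from }\vst)=\alpha\io\psi v_\alpha e^{v_\alpha}\ge0$; combined with $v_\alpha\le\gamma$ this bounds $\alpha\io v_\alpha e^{v_\alpha}$ from above, i.e. $\io v_\alpha e^{v_\alpha}\to0$, forcing $v_\alpha\to0$ a.e.\ — which unfortunately pushes $m(\alpha)=\io\alpha e^{v_\alpha}$ in the wrong direction only if $\alpha e^{v_\alpha}$ were bounded. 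The correct reading: since $e^{v_\alpha}\ge1$ always, $m(\alpha)\ge\alpha|\Omega|\to\infty$ trivially.

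Indeed that last observation is the whole point: because $v_\alpha\ge0$ by \eqref{v-linear-uniformbound}, we have $e^{v_\alpha}\ge1$, so $m(\alpha)=\io\alpha e^{v_\alpha}\ge\alpha|\Omega|\to\infty$ as $\alpha\to\infty$. Thus surjectivity onto $[0,\infty)$ follows from continuity, $m(0)=0$, the intermediate value theorem, and this lower bound. The only step requiring genuine care is the differentiation under the integral and the strict positivity of $m'$: the former is legitimate precisely because Lemma~\ref{lem:vprime} supplies convergence of the difference quotients in $C^2(\Ombar)$, so $\frac{d}{d\alpha}\io\alpha e^{v_\alpha}=\io\frac{\partial}{\partial\alpha}(\alpha e^{v_\alpha})$ with the integrand converging uniformly; the latter is immediate from Lemma~\ref{lem:vprime-estimates}. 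So I expect no serious obstacle — the lemma follows by assembling Lemma~\ref{lem:vprime}, Lemma~\ref{lem:vprime-estimates}, the a priori bound $0\le v_\alpha\le\gamma$, and elementary monotone-function arguments.
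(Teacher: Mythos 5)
Your proposal is correct and follows essentially the paper's own argument: injectivity from $m'(\alpha)=\io e^{v_\alpha}(1+\alpha v'_\alpha)>0$ via Lemma~\ref{lem:vprime} and Lemma~\ref{lem:vprime-estimates}, and surjectivity from $m(0)=0$ together with $m(\alpha)\ge\alpha|\Omega|\to\infty$ (using $v_\alpha\ge 0$) and continuity. The exploratory detours in your surjectivity discussion are discarded in favour of exactly the bound the paper uses, so no substantive difference remains.
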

\begin{proof}
 Computing the derivative of $m$ (which uses Lemma~\ref{lem:vprime}), like in \cite[L.3.15]{braukhoff_lankeit}, we obtain 
 \[
  m'(α)=\io e^{v_{α}}+\io αe^{v_{α}}v'_{α} = \io e^{v_{α}}(1+αv'_{α}),
 \]
so that $m'(α)>0$ by Lemma~\ref{lem:vprime-estimates}, ensuring injectivity. Since $m(0)=0$ and $m(α)=\io αe^{v_{α}}\ge α|Ω|\to ∞$ as $α\to∞$, surjectivity is obvious.
\end{proof}

\begin{proof}[Proof of Theorem~\ref{th:stationary}]
  Combining Lemma~\ref{lem:stationary-u} with \eqref{def:m} shows that $(u,v)\in (C^2(\Ombar))^2$ solves \eqref{stationary} with 
  $\io u=m_0$ if and only if $m_0=m(α)$, $u=αe^v$ and $v=v_{α}$ solves \eqref{stationary-v}. 
  Bijectivity of $m$ (Lemma~\ref{lem:m}), existence and uniqueness of $v_{α}$ (Lemma~\ref{lem:stationary-v} 
  and Lemma~\ref{lem:v-monotone-wrt-alpha}) therefore imply the first part of Theorem~\ref{th:stationary}.\abs
  In the case when $\Om$ is a ball and $\vst$ is constant, radial symmetry follows from the above uniqueness statement.
  Since $u=α\exp(v)$ by Lemma~\ref{lem:stationary-u} and $\exp$ is monotone and convex, to complete the proof 
  it is sufficient to show convexity of $v$, that is of   
  the solution to \eqref{stationary-v}. But when written in radial coordinates, \eqref{stationary-v} turns into 
  \[
  	(r^{n-1}v_r)_r = αr^{n-1} ve^v,\qquad r\in(0,R),\qquad v(R)=\vst,
  \]
  with  $v_r(0)=0$ due to radial symmetry and differentiability of $v$. Hence, 
  \begin{equation}\label{vr}
 	v_r(r)=r^{1-n} \int_0^r αs^{n-1} v(s)e^{v(s)}ds = αr \int_0^1 t^{n-1}v(rt)e^{v(rt)}dt. 
  \end{equation}
  Nonnegativity of $v$ (cf. \eqref{v-linear-uniformbound}) shows that hence $v_r\ge 0$; thus the rightmost expression in \eqref{vr} is 
  clearly increasing with respect to $r$, which shows monotonicity of $v_r$ and therefore convexity of $v$.
\end{proof}

{\small
 \section*{Acknowledgement}
 The second author acknowledges support of the \textit{Deutsche Forschungsgemeinschaft} in the contex of the project \textit{Emergence of structures and advantages in cross-diffusion systems} (Project No. 411007140, GZ: WI 3707/5-1).
}
{
\small
\setlength{\parskip}{0pt}
\setlength{\itemsep}{0pt plus 0.3ex}
 
 \def\cprime{$'$}

}
\end{document}